\renewcommand{\Pr}{\mathbb{P}}
\newcommand{\Exp}{\mathbb{E}}
\newcommand{\Poi}{\text{Poi}}
\newcommand{\PoiP}{\mathscr{P}_{\text{poi}}}
\newcommand{\PoiF}{\mathscr{P}_{n, d}^{C}}
\newcommand{\PoiD}{\mathscr{P}_{n, d}^{D}}
\newcommand{\PoiM}{\mathscr{P}_{n, d}^{M}}
\newcommand{\PoiFt}{\mathscr{P}_{n, d}^{C^\prime}}
\newcommand{\PoiMt}{\mathscr{P}_{n, d}^{M^\prime}}
\newcommand{\PoiDt}{\mathscr{P}_{n, d}^{D^\prime}}
\newcommand{\red}[1]{\textcolor{red}{#1}}
\newcommand{\dy}[1]{\textcolor{magenta}{#1}}
\newcommand{\gt}[1]{\textcolor{blue}{#1}}
\newcommand{\remove}[1]{}
\renewcommand{\red}[1]{#1}
\renewcommand{\dy}[1]{#1}
\renewcommand{\gt}[1]{#1}
\newtheorem{theorem}{Theorem}[section]
\newtheorem{corollary}[theorem]{Corollary}
\newtheorem{lemma}[theorem]{Lemma}
\newtheorem{definition}[theorem]{Definition}
\newtheorem{remark}[theorem]{Remark}
\newtheorem{prop}[theorem]{Proposition}
\numberwithin{equation}{section}
\numberwithin{theorem}{section}
\newcommand{\be}{\begin{equation}}
\newcommand{\ee}{\end{equation}}
\newcommand{\Real}{\mathbb R}
\newcommand{\To}{\longrightarrow}
\newcommand{\A}{\mathcal{A}}
\newcommand{\Dgm}{\mathrm{Dgm}}
\newcommand{\discrete}{\mathbb{N}}
\newcommand{\bdr}{\partial}
\newcommand{\cobdr}{\delta}
\newcommand{\Hg}{\mathrm{H}}
\newcommand{\im}{\mathrm{im}\;}
\newcommand{\md}{\mathrm{d}}
\newcommand{\1}{\mathbf{1}}
\newcommand{\df}[1]{ \textnormal{d}#1}
\def\M{\mathcal{M}}
\def\sC{\mathscr{C}}
\def\F{\mathcal{F}}
\def\sF{\mathscr{F}}
\def\U{\mathcal{U}}
\def\cL{\mathcal{L}}
\def\Lp{\mathcal{L}^\prime}
\def\Dp{D^\prime}
\def\Mp{M^\prime}
\def\K{\mathcal{K}}
\def\IR{\mathcal{I}(\bR)}
\def\S{\mathcal{S}}
\def\sP{\mathscr{P}}
\def\G{\mathcal{G}}
\def\sI{\mathscr{I}}
\def\bH{\mathbb{H}}
\def\bF{\mathbb{F}}
\def\bR{\mathbb{R}}
\def\bZ{\mathbb{Z}}
\def\ccR{C_{c}^{+}(\bR)}
\def\borR{\mathcal{B}(\mathbb{R})}
\def\EP{\mathbb{E}}
\def\phip{\phi^\prime}
\def\rank{\mathrm{rank}}
\def\supp{\mathrm{supp}}
\begin{document}

\title[Random $d-$complexes]{Randomly weighted $d-$complexes: Minimal spanning acycles and Persistence diagrams}%
\author{Primoz Skraba}
\address[PS]{School of Mathematical Sciences, Queen Mary University of London, UK \& Jozef Stefan Institute, Ljubljana, Slovenia. \textnormal{Research supported  by ARRS project N1-0058.}}
%
\email{primoz.skraba@ijs.si}%

\author{Gugan Thoppe$^*$}
\thanks{$^*$Corresponding Author. Postal Address: Dept. of Computer Science and Automation, Indian Institute of Science, Bengaluru, Karnataka 560012, India; Tel.: +91 996-906-6600; Fax:+91-80-23602911}
\address[GT]{Dept. of Computer Science and Automation, Indian Institute of Science, Bengaluru, India; Past: Faculty of Electrical Engineering, Technion-Israel Institute of Technology, Haifa, Israel. \textnormal{Research supported by URSAT, ERC Grant 320422.}}%
\email{gugan.thoppe@gmail.com}

\author{D. Yogeshwaran}
\address[DY]{Statistics and Mathematics Unit, Indian Statistical Institute, Bengaluru, India. \textnormal{Research supported by DST-INSPIRE faculty award.}}
\email{d.yogesh@isibang.ac.in}

\subjclass{Primary : 60C05, 05E45  
Secondary : 60G70, 60B99, 05C80 
}
\keywords{Random complexes, Persistent diagrams, Minimal spanning acycles, Point processes, Weak convergence, Stability.}

\maketitle
\begin{abstract}
\gt{A weighted $d-$complex is a simplicial complex of dimension $d$ in which each face is assigned a real-valued weight. We derive three key results here concerning persistence diagrams and minimal spanning acycles (MSAs) of such complexes. First, we establish an equivalence between the MSA face-weights 
and \emph{death times} in the persistence diagram. 
Next, we show a novel stability result for the MSA face-weights which, due to our first result, also  holds true for the death and birth times, separately. 
Our final result concerns a perturbation of a mean-field model of randomly weighted $d-$complexes. The $d-$face weights here are perturbation of some i.i.d. distribution while all the lower-dimensional faces have a weight of $0$. If the perturbations decay sufficiently quickly, we show that suitably scaled extremal nearest face-weights, face-weights of the $d-$MSA, and the associated death times converge to an inhomogeneous Poisson point process. This result completely characterizes the extremal points of persistence diagrams and MSAs. The point process convergence and the asymptotic equivalence of three point processes are new for any weighted random complex model, including even the non-perturbed case.} Lastly, as a consequence of our stability result, we show that Frieze's $\zeta(3)$ limit \cite{Frieze85} for random minimal spanning trees and the recent extension to random MSAs by Hino and Kanazawa \cite{Hino2018} also hold in suitable noisy settings.
\end{abstract}

%

\section{Introduction}
Broadly, there are two parts to this paper. The first part concerns weighted simplicial complexes. This study significantly deepens the understanding of the relationship of minimal spanning acycles in such complexes to associated persistence diagrams and also to what we refer to as ``nearest face'' distances. The second part looks at a specific ``mean-field" model of complexes with random weights and, in parallel, also considers its perturbations. We refer to these complexes as randomly weighted $d-$complexes or, simply, weighted random complexes. Our results  completely characterize the extremal behaviour of the persistence diagram and the nearest face distances associated with such complexes and then, using the above relationships, also of their minimal spanning acycles.

The motivation for this work comes from the much more studied scenario of weighted graphs, the $1-$dimensional analogue of weighted simplicial complexes, and their random counterparts. A weighted graph can either be viewed in its entirety or as a process wherein it is sequentially built by adding edges in an order dictated by their weights. Taking the former perspective, a minimal spanning acycle corresponds to a minimal spanning tree, while the nearest face distances are basically the nearest neighbour distances. The other viewpoint helps interpret the persistence diagram associated with a graph; informally, it is a record of the ``death times", i.e., the weight values of those edges that connect \dy{a priori disjoint} components.

The fact that connectivity and nearest neighbour distances are intertwined 
can be seen from the earliest work itself on random graphs by Erd\H{o}s and R\'{e}nyi \cite{Erdos59}. \gt{In fact, three years earlier,  Kruskal had proposed his algorithm  for constructing a minimal spanning tree. The edge weights of this tree are precisely the times at which components get connected in the process type description of the weighted graph. Hence, that work can be viewed as the first to implicitly exploit the connections between minimal spanning trees and persistence diagrams.} \dy{\gt{Indeed,} the notion of persistence diagrams did not exist then, but one interpretation of Kruskal's algorithm is via persistence diagrams and this \gt{relation} is made more clear in this paper.} 
This implicit relationship was also used later in the seminal work of Frieze \cite{Frieze85}. On the other hand, connections between the largest nearest neighbour distances and the longest edges of a minimal spanning tree on randomly weighted graphs have played a key role in \cite{Henze82, Steele86, Appel02, Penrose97, Hsing05}. In \cite{Penrose97}, it was shown that the extremal nearest neighbour distances coincided with that of extremal edge-weights in a Euclidean random minimal spanning tree. Such a result is crucial to understanding the connectivity threshold for random geometric graphs (see \cite[Chapter 13]{Penrose03}). More complete accounts of such connections can be found in \cite{Bollobas01, Hofstad16, Janson00, Penrose03, Frieze16}.

Recent applications in topological data analysis have motivated the extension of the above results to random complexes. While higher dimensional analogues of connectivity thresholds \gt{have} already been studied \cite{linial2006homological, meshulam2009homological, kahle2014sharp}, this work generalizes some of these later results to the level of persistence diagrams and minimal spanning acycles. Before delving into the background and details of our results, we summarize our main contributions. \gt{Note that a weighted $d-$complex is a simplicial complex with dimension $d$ in which each face is assigned a \dy{real-valued weight.} Throughout, we will assume that this weight function is monotone, i.e., the weight of any face is always larger than that of its sub-faces. Such a weighted complex can also be viewed as a process wherein one adds faces in the order dictated by their weights. Because the weight function is monotone, any  intermediate construction is also a simplicial complex. With this dual perspective, one can infer properties  about minimal spanning acycles from the death and birth times in the persistence diagram and vice versa.} 

The section numbers in brackets below indicate where one can find a detailed description of the corresponding contribution.\\

\noindent \textbf{Key Contributions}: \begin{enumerate}
%
%
\item  \gt{We first provide a \dy{simplicial analogue} of Kruskal's algorithm that can be used for finding minimal spanning acycles. Comparing this  algorithm with the incremental algorithm used to build a  persistence diagram, we establish an equivalence between the face-weights of minimal spanning acycles and the death times; a similar result also holds true for the birth times. This result significantly enhances the connection between minimal spanning acycles and persistence diagrams. In fact, one of the theorems in Hiraoka and Shirai \cite[Theorem 1.1]{hiraoka2015minimum} now becomes a simple corollary of our result. (Section~\ref{ssec:pd-msa}).} 

\item \gt{Next, we establish a new stability result for minimal spanning acycles. Because of the equivalence above, this result then automatically applies to the death and birth times as well. Unlike existing stability results for persistence diagrams which concern the multiset of birth-death pairs, our result specifically relates the changes in the set of deaths and, separately, in the set of births to the changes in the face weights. We believe this result can play a crucial role in proving results for randomly weighted complexes with certain dependencies between the different face weights. (Section~\ref{ssec:stability})} 

\item \gt{Our final key result concerns randomly weighted $d-$complexes and suitable noisy perturbations of them, including those with dependencies. If the perturbations decay sufficiently fast, we show that appropriately} scaled versions of the following three point processes: (a) nearest face distances, (b) death times in the persistence diagram, and (c) face-weights of the minimum spanning acycle -- converge weakly to the same Poisson point process in vague topology. \gt{Derivation of (a) and (b) involves use of the method of factorial moments, cohomology theory, and our stability result. On the other hand, going from (b) to (c) is a simple application of our first result. However, unlike in our second contribution, notice that this time we exploit the equivalence in the other direction, i.e., we go from a result on death times to a result on face weights of the minimal spanning acycle.} An important paradigm in topological data analysis is that extremal points of a persistence diagram encode meaningful topological information about the underlying structure. Viewed in this light, our result completely characterizes the extremal points of the persistence diagram of these weighted random complexes; this is new even in the non-noisy scenario. (Section \ref{ssec:rc}).

\item We conclude by providing another application of our stability result. Namely, the lifetime sum of persistence diagrams converge for randomly weighted $d-$complexes with noisy weights; this generalizes the $\zeta(3)$-limit for random minimal spanning trees by Frieze \cite{Frieze85} and the recent extension to random spanning acycles by Hino and Kanazawa \cite{Hino2018}. (Section \ref{ssec:rc}).

\end{enumerate}

\paragraph{\sc Organisation of Paper:} The rest of this section quickly introduces simplicial complexes, minimal spanning acycles, persistent homology and then provides more precise statements of some of our main results as well as place them in context. The next section - Section \ref{sec:prelim} - gives in detail the necessary topological (Section \ref{sec:topology}) and probabilistic preliminaries (Section \ref{sec:probability})\footnote{In our effort to make this paper reasonably self-contained as well as accessible to the trio of probabilists, combinatorialists and topologists, we have erred on the side of including too much detail rather than terseness.}. Section \ref{sec:MSA} is exclusively devoted to studying various properties of minimal spanning acycles, algorithms to  find them,  their connection to persistence diagrams, and our crucial stability result. \gt{Finally, in Section~\ref{sec:weighted_random}, we study weighted random complexes} and prove our point process convergence results. In subsection~\ref{sec:uniformly_Weighted_Complex}, the weights are independent and identically distributed (i.i.d.) uniform $[0,1]$ random variables while, in subsection~\ref{sec:extensions}, the weights are either i.i.d. with a more general distribution $\sF$ or a perturbation of the same. \dy{In the Appendix, we give proofs of two results needed for the main part of the paper and a brief explanation on the method of factorial moments.}


\subsection{Simplicial complexes and minimal spanning acycles:}
\label{ssec:msa}

We begin by defining a (simplicial) complex, which is a higher dimensional analogue of a graph.
\begin{definition}
\label{def:complex}
An \emph{(abstract) simplical complex} $\K$ on a finite {\em ground set} $V$ is a collection of subsets of $V$ such that if $\sigma_1 \in \K$ and $\emptyset \neq \sigma_2 \subset \sigma_1$, then $\sigma_2 \in \K$ as well. The elements of $\K$ are called {\em simplices or faces} and the dimension of a simplex $\sigma$ is $|\sigma| - 1,$ where $|\cdot|$ means cardinality. A $d-$face of $\K$ is a face of $\K$ with dimension $d.$
\end{definition}
Given a complex $\K$ and $d \geq 0,$ we denote the $d-$faces of $\K$ by $\F^d(\K)$ and its $d-$skeleton by $\K^d$ (i.e., the sub-complex of $\K$ consisting of all faces of dimension at most $d$). We use $\sigma, \tau$ to denote faces and the dimension of the face shall not be explicitly mentioned unless required. A graph is a complex that consists only of $0$-faces and $1$-faces, or in other words, the $1$-skeleton of a complex is a graph. Associated to each simplicial complex is a collection of non-negative integers denoted $\beta_0(\K),\beta_1(\K),\ldots,$ called the Betti numbers\footnote{Throughout the paper, we work with reduced Betti numbers defined using field coefficients.} (see Section \ref{sec:topology} for detailed definitions) which are a measure of connectivity of the simplicial complex. Informally, the $d-$th Betti number counts the number of $(d + 1)$-dimensional holes in the complex or equivalently the number of independent non-trivial cycles formed by $d-$faces. Two points to note at the moment are: (i) $\beta_0(\K)$ is one less than the number of connected components in the graph formed by  $0$-faces and $1$-faces and (ii) if the dimension of $\K$ (maximum of dimension of faces) is $d$, then $\beta_j(\K) = 0$ for all $j \geq d+1$. \\

The Betti numbers described above are closely connected to spanning acycles. For example, the spanning tree of a graph on a vertex set $V$ can be described in topological terms as a set of edges $S$ such that $\beta_0(V \cup S) = \beta_1(V \cup S) = 0,$ i.e., $V \cup S$ is connected and has no cycles. The following higher-dimensional generalization by Kalai \cite{kalai1983enumeration} is then natural.

\begin{definition}[Spanning and Maximal acycle]
\label{def:SA}
Consider a complex $\K$ of dimension at least $d$, $d \geq 1$. \gt{A subset $S$ of $d-$faces is said to be {\em spanning} if $\beta_{d-1}(\K^{d-1} \cup S) = 0$ and an {\em acycle} if $\beta_{d}(\K^{d-1} \cup S) = 0;$ it is called a {\em spanning acycle} if it has both the properties. Separately,} a subset $S$ of $d-$faces is called a {\em maximal acycle} if it is an acycle and maximal with respect to (w.r.t.) the inclusion of $d-$faces.
\end{definition}

Though this definition of a spanning acycle merely replaces appropriate indices in the definition of a spanning tree, what is not obvious is that this is a \emph{good} higher-dimensional generalization of a spanning tree. \gt{This work of ours is the first to formally ascertain that several key properties of a spanning tree naturally extend to a spanning acycle as well; see our results in Section~\ref{sec:MSA}.}

\gt{An alternative but} more explicit algebraic description of a spanning tree is that  it consists of a set of columns which form a basis for the column space of the incidence matrix or \gt{the} boundary matrix; i.e., the matrix  $\partial_1$ whose rows are indexed by vertices and columns by edges \remove{such that the} \gt{and its} $i,j-$th entry is $1$ if the vertex $i$ belongs to the edge $j$ and $0$ otherwise. For simplicity, we \remove{are assuming our} \gt{assume throughout this paper that the} underlying field $\bF = \bZ_2$ here, i.e., all vector spaces involved are $\bZ_2$-vector spaces. It is well-known that the space of bases for these vector spaces form a matroid. Such a description also holds for spanning acycles. \gt{While we never explicitly work with this latter description in this paper, it, however, implicitly underpins many of our proof ideas. We shall explicitly point this out whenever that is the case.} 

\remove{
\red{It is helpful to consider the definition of a spanning tree via Betti numbers (see Section~\ref{sec:topology} for a more complete introduction). The spanning tree is a set of columns which form a basis for the column space of the incidence matrix or boundary matrix; i.e., the matrix $\partial_1$ whose rows are indexed by vertices and columns by edges such that the $i,j-$th entry is $1$ if the vertex $i$ belongs to the edge $j$ and $0$ otherwise\footnote{For simplicity, we are assuming our underlying field $\bF = \bZ_2$ here, i.e., all vector spaces involved are $\bZ_2$-vector spaces.}. 
}}

If we assign weights to the faces, we obtain a weighted complex $\K.$ Now, setting $w(S) := \sum_{\sigma \in S} w(\sigma)$ for a subset $S$ of simplices, we can naturally define a minimal spanning acycle as a spanning acycle $S$ with minimum weight $w(S)$. Since we deal with only finite complexes, the existence of a minimal spanning acycle is guaranteed once a spanning acycle exists. We shall denote a minimal spanning acycle by $M_d$ or simply $M$ when the dimension is clear. Though Kalai's definition of a spanning acycle and enumeration of number of spanning acycles (a generalization of Cayley's formula for spanning trees) is more than three decades old, it is receiving increased attention in the last few years
\cite{Bajo14,Catanzaro15,Duval09,Duval11,Duval16,Kalisnik17,Krushkal14,hiraoka2015minimum,Hiraoka16,Lyons09,Linial14shadows,Mathew15Hypertrees}.
%
In  Section~\ref{sec:MSA}, we  prove some fundamental properties for minimal spanning acycles: existence, uniqueness, cut property and a simplicial Kruskal's algorithm.  
Here we would also like to emphasize that properties of spanning acycles are preserved under simplicial isomorphisms but not necessarily under homotopy equivalence. 

\dy{We would like to \gt{highlight} that some more fundamental properties of the minimal spanning acycles can be found in an earlier version of our article (see~\cite{Skraba17}); we do not include them here since they are not used elsewhere in the paper. Of these, we would like to point the reader to two interesting results which are not known for matroids in general. First is an inclusion-exclusion identity for the cardinality of maximal acycles which is derived using the Mayer-Vietoris exact sequence from algebraic topology. Second, we provide a generalization of Jarn\'ik-Prim-Dijkstra's algorithm to spanning acycles. In fact, we need the complex to be `hypergraph connected' for the Prim's algorithm to work and it is not obvious what is the analogous notion of `hypergraph connectivity' in general matroids. As part of the proof, we also show that a spanning acycle is `hypergraph connected', again by using the Mayer-Vietoris sequence.}

\subsection{Persistence diagrams and minimal spanning acycles.}
\label{ssec:pd-msa}
%
We now preview the connection between persistence diagrams and acycles.  \gt{Let $\K$ be a weighted complex such that the real-valued weight function $w$ is monotone. Then, $\K(t) := w^{-1}(-\infty,t]$ is a simplicial complex for all $t \in \Real$ and we will refer to $\{\K(t): t \in \Real\}$ as the filtration induced by $w$ on $\K.$}

Let $d \geq 0$ and suppose that $\beta_d(\K) = 0$. Let $\beta_d(t) = \beta_d(\K(t)).$ We remark that $\beta_d(t)$ is a jump function. The times of positive jumps (counted with multiplicity) are birth times $\mathcal{B} = \{B_i\}$ of the persistence diagram and the times of negative jumps (counted with multiplicity) are death times $\mathcal{D} = \{D_i\}.$ The correct way to count multiplicity will be made clear in Definition \ref{def:pd}. However, if the weight function is injective, then there is no multiplicity. The non-expert reader may assume weight functions to be injective for ease of understanding the results in the introduction.

\gt{Formally, a persistence diagram corresponding to dimension $d$ is the multiset of the points $\{(B_i,D_i)\}.$ Note that it is not only a record of the birth and death times, but importantly also of the pairing of a birth with its corresponding death. A persistence diagram is useful for understanding} the evolution of topology of a filtration. See Figure \ref{fig:lm_example} for persistence diagrams of two weighted random complexes - the uniformly weighted random $d-$complexes (see Section~\ref{sec:uniformly_Weighted_Complex}) and Erd\H{o}s-R\'enyi clique complexes. The aforementioned persistence diagram would be referred to as the persistence diagram of $\Hg_d(\K)$ whenever we wish to avoid ambiguities about the dimension and the underlying complex. In this paper, we shall focus only on their two projections - birth and death times. Though not everything can be inferred from these projections, a crucial quantity that can be understood from these projections is the {\em lifetime sum} $L_d(\K) := \sum_i (D_i - B_i),$ \gt{which by Fubini's theorem also equals $\int_{0}^\infty \beta_d(t) \df{t}$ (\cite[(1.4)]{hiraoka2015minimum}).} We now  present the \dy{first of our main theorems that connects persistence diagrams} to minimal spanning acycles. Here and elsewhere, when the underlying complex $\K$ is clear we shall drop it from all our notations.
\begin{figure}[th!]
\centering
\includegraphics[width = \textwidth]{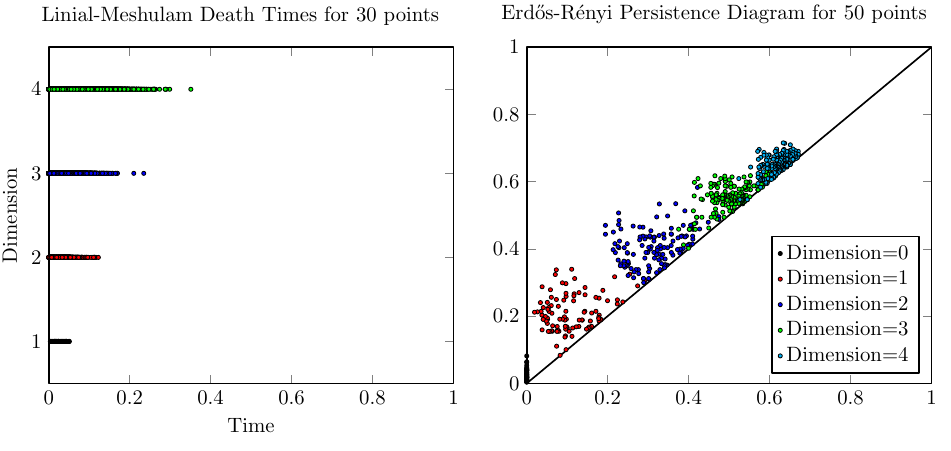}
\caption{(Left) The death times corresponding to the uniformly weighted random $d-$complex built on 30 points in different dimensions. (Right) The persistence diagram for an Erd\H{o}s-R\`enyi clique complex for 50 points. }
\label{fig:lm_example}
\end{figure}
\begin{theorem}
\label{thm:death_MSA}
Let $\K$ be a weighted $d-$complex with $\beta_{d-1}(\K) = 0$. Let $\mathcal{D}$ be the point-set of death times in the persistence diagram of the $\Hg_{d-1}(\K)$ with the canonical filtration\footnote{Again, this terminology will be explained in Section \ref{sec:PersistentHomology} and is required only for non-injective weight functions.} induced by the weights. Similarly, let $\mathcal{B}$ be the point-set of birth times in the persistence diagram of $\Hg_{d}(\K).$ Then, we have that
\[ \mathcal{D} = \{w(\sigma) : \sigma \in M \} \quad \text{ and } \quad \mathcal{B} = \{w(\sigma) : \sigma \in \F^{d} \backslash M \},\]
where $M$ is a $d-$minimal spanning acycle of $\K$ and $\F^{d}$ are the $d-$simplices of $\K$.
\end{theorem}
%
\gt{This result reveals a stronger connection between persistence diagrams and minimal spanning acycles than what is known in literature}. If $\K$ is a weighted $d-$complex with $\beta_{d-1}(\K) = \beta_{d-2}(\K) = 0$ then, as a corollary of the above theorem, we obtain the following relation
\gt{
\begin{equation}
\label{eqn:lifetime_Hiraoka}
L_{d-1} = \sum_i(D_i - B_i) =  
\sum_{\sigma \in M_d}w(\sigma)   -  \sum_{\sigma \in \F^{d-1} \setminus M_{d-1}}w(\sigma) = w(M_d) + w(M_{d-1}) - w(\F^{d-1}).
\end{equation}}
For $d = 1$ (assuming $\K^0 \subset w^{-1}(0)$), the above relation is well known and, for $d \geq 2$, this relation was derived recently in \cite[Theorem 1.1]{hiraoka2015minimum} using different techniques. This latter paper and, in particular, their derivation of \eqref{eqn:lifetime_Hiraoka} served as our stimulus to investigate minimal spanning acycles. Apart from its striking simplicity, 
we believe Theorem \ref{thm:death_MSA} can be useful in studying either of them using the other. In fact, this result is frequently used in this paper. Much of the complexity in understanding persistent homology arises from the pairing of birth and death times. The above result is useful in understanding death or birth times individually and, in certain cases, this shall yield useful information (e.g., lifetime sum) even without the knowledge of the pairings.
The proof of the above theorem and some of its consequences can be found in Section \ref{sec:MSA_persistence}. 


\subsection{ Stability of birth and death times}
\label{ssec:stability}

Stability results (e.g. \cite[Section VIII.2]{Edelsbrunner10}, \cite{chazal2009proximity,chazal2014persistence,cohen2007stability,cohen2010lipschitz}) are an important cog in the wheel of topological data analysis and provide a theoretical justification for the robustness of persistent homology. While $L_{\infty}$ stability (or {\em bottleneck stability}) is the most standard form of stability proven for persistence diagrams, $L_p$ stability for $p \geq 0$ requires restrictive assumptions that are not widely applicable. Using  simplicial version of Kruskal's algorithm and the correspondence (Theorem \ref{thm:death_MSA}), we prove the following stability result  separately for the birth and death times with minimal assumptions. The usefulness of this stability result will become apparent in Section~\ref{ssec:rc}
\begin{theorem}
\label{thm:l1_stability}
Let $\K$ be a finite complex with two weight functions \gt{$f, f';$} both of which induce a filtration on $\K.$ Let $\mathcal{B}_f  = \{B_i\}$ and $\mathcal{D}_f = \{D_i\}$ be the respective birth and death times in the $\Hg_d(\cdot)$ and $\Hg_{d-1}(\cdot)$ persistence diagrams of $f.$ Similarly, define  \gt{$\mathcal{B}_{f'}  =  \{B'_i\}$} and \gt{$\mathcal{D}_{f'} = \{D'_i\}$ w.r.t. $f'.$} Let $\Pi_D$ be the set of bijections from $\mathcal{D}_f$ to \gt{$\mathcal{D}_{f'}$} and, similarly, let $\Pi_B$ be the set of bijections from $\mathcal{B}_f$ to \gt{$\mathcal{B}_{f'}$}. Then, for any $p \in \{0,\ldots,\infty\}$,
\[   \max\{\textstyle{\inf_{\pi \in \Pi_D}}\sum_i|D_i - \pi(D_i)|^p, \textstyle{\inf_{\pi \in \Pi_B}} \sum_i|B_i - \pi(B_i)|^p\} \leq \sum_{\sigma \in \F^d}|f(\sigma) - f'(\sigma)|^p,\]
%
%
For $p = \infty$ and a sequence $\{x_i\}_{i \geq 1}$, in the usual manner, $\sum_i |x_i|^p$ should be read as $\sup_i |x_i|$.
\end{theorem}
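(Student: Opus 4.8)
The plan is to translate the statement into one about the minimal $d$-spanning acycle and then to localize to a one-face perturbation that the simplicial Kruskal algorithm handles cleanly.

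First I would use \eqref{eqn:DB_MSA} together with its companion for the positive $d$-faces: in a filtration each $d$-face either kills a $(d-1)$-cycle, in which case it lies in the minimal $d$-spanning acycle $M_d$ and its weight is a death time of $\Hg_{d-1}$, or it creates a $d$-cycle, in which case its weight is a birth time of $\Hg_d$. Writing $M_d(h)$ for the minimal acycle of a weight function $h$, this gives $\mathcal{D}_f=\{f(\sigma):\sigma\in M_d(f)\}$ and $\mathcal{B}_f=\{f(\sigma):\sigma\in\F^d\setminus M_d(f)\}$, and likewise for $g$. Since $\F^d=M_d(f)\sqcup(\F^d\setminus M_d(f))$, it suffices to exhibit, for each $p$, matchings $\pi_D\colon\mathcal{D}_f\to\mathcal{D}_g$ and $\pi_B\colon\mathcal{B}_f\to\mathcal{B}_g$ each of $L_p$-cost at most $\sum_{\sigma\in\F^d}|f(\sigma)-g(\sigma)|^p$; the maximum in the theorem is then automatically controlled. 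Thus the theorem is an $L_p$-stability statement for the weight multiset of a minimal spanning acycle, read off in the two dimensions $d-1$ and $d$.

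Next I would reduce to the case in which $f$ and $g$ differ at exactly one $d$-face. Enumerating $\F^d=\{\sigma_1,\dots,\sigma_m\}$ and letting $h_k$ agree with $g$ on $\{\sigma_1,\dots,\sigma_k\}$ and with $f$ elsewhere, consecutive $h_{k-1},h_k$ differ only at $\sigma_k$, and each $h_k$ still induces a filtration, since at every face its value lies between $f$'s and $g$'s and hence inside the interval that the filtration condition already permits. For one changed face $\sigma_0$ I would run the simplicial Kruskal algorithm while sliding $w(\sigma_0)$ monotonically from $f(\sigma_0)$ to $g(\sigma_0)$ and track the greedily built basis of $\bdr_d$. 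The key lemma is that $M_d$ changes by at most a single basis exchange $\sigma_0\leftrightarrow\sigma_1$, with the weight of the incoming face $\sigma_1$ lying in the closed interval between $f(\sigma_0)$ and $g(\sigma_0)$: as $w(\sigma_0)$ increases, $\bdr\sigma_0$ can only become redundant (never the reverse), the first later face that becomes non-redundant once $\bdr\sigma_0$ has slipped past it is the unique face entering $M_d$, and from that point on the spans under the two orderings coincide so nothing further changes. Hence $\mathcal{D}_f,\mathcal{D}_g$ (and likewise $\mathcal{B}_f,\mathcal{B}_g$) agree except for one value, which moves within $[\,f(\sigma_0)\wedge g(\sigma_0),\,f(\sigma_0)\vee g(\sigma_0)\,]$; matching that value to its image and keeping the identity elsewhere gives $L_p$-cost $\le|f(\sigma_0)-g(\sigma_0)|^p$, for every $p\in\{0,1,\dots,\infty\}$ and for each of the death- and birth-matchings.

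Finally I would assemble the one-face matchings. Since each step's matching is the identity away from a single value confined to the interval of the face changed at that step, composing the steps lets one charge the total displacement of each death (resp. birth) value to a \emph{distinct} face of $\F^d$, with displacement at most the weight change at that face; summing $p$-th powers over those distinct faces yields $\sum_i|D_i-\pi_D(D_i')|^p\le\sum_{\sigma\in\F^d}|f(\sigma)-g(\sigma)|^p$ uniformly in $p$, and the same for births. I expect the main obstacle to be exactly the single-exchange lemma together with the ``conservation'' it feeds into at the assembly stage — namely that a value once displaced is not dragged further by a later step whose interval overlaps the earlier one — which is the delicate Kruskal bookkeeping and is perhaps cleanest to run as an induction on $|\{\sigma:f(\sigma)\neq g(\sigma)\}|$, carrying the stronger inductive claim that the matching's displacements admit an injective assignment to the differing faces. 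A minor preliminary point is to first reduce to injective weight functions by a generic perturbation and pass to the limit, so that birth/death times and the matchings are unambiguous.
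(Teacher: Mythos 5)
Your proposal follows essentially the same route as the paper's proof: reduce birth/death times to the weights of the minimal spanning acycle via Theorem \ref{thm:death_MSA}, interpolate between $f$ and $g$ one $d$-face at a time, and prove via Kruskal's algorithm that a single-face perturbation changes the minimal spanning acycle by at most one exchange $\sigma_0\leftrightarrow\tau$ with $w(\tau)$ trapped in the interval between the old and new weights (this is exactly the paper's four-case analysis in Section \ref{sec:stability}). You are in fact more explicit than the paper about the one subtle point in the assembly — that for $p>1$ naive composition of the one-face matchings only yields $(\sum_k c_k)^p$ rather than $\sum_k c_k^p$, so one must track an injective charging of displaced values to perturbed faces — a point the paper dispatches with "by a recursive application of the above case."
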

%

As part of the proof (see Section \ref{sec:stability}) of the above stability result, we show that \emph{on a fixed simplicial complex} changing weights of $m$ ($m \geq 1$) faces can change at most $m$ death times and $m$ birth times by the difference between the weights on the faces\footnote{By fixing the underlying space, we can ensure the cardinalities of birth and death times remain the same.}. One might suspect that the $L_{\infty}$ stability in the above theorem can be deduced from the bottleneck stability of persistence diagrams by a projection argument. 
This is, however, not the case due to the fact that the diagonal plays a special role in the definition of bottleneck stability of persistence diagrams, \gt{but for point processes on $\bR$ there is no equivalent to the diagonal}.

\subsection{Weighted random complexes}
\label{ssec:rc}

Having offered a teaser to our deterministic results, we now turn to a preview of the probabilistic results. Whereas there is a rich recent literature on deterministic aspects of spanning acycles (see in Section \ref{ssec:msa}) and random complexes (see below), the literature is sparser on weighted random complexes or random minimal spanning acycles. The probabilistic model of interest to us is the one introduced by Linial and Meshulam \cite{linial2006homological} and then extended by Meshulam and Wallach \cite{meshulam2009homological}. This model, called the random $d-$complex and denoted by $Y_{n,d}(p)$, consists of all faces on $n$ vertices (i.e., ground set $V = [n] := \{1, \ldots, n\}$) with dimension at most $(d-1)$ and each $d-$face is included with probability $p$ independently. $Y_{n,1}(p)$ is the classical Erd\H{o}s-R\`enyi graph on $n$ vertices with edge-connection probability $p$. Like Erd\H{o}s-R\`enyi graph is a mean-field model of pairwise interactions, the random $d-$complex can be considered as a model of higher-order interactions. This model has spawned a rich literature in the recent years \cite{linial2006homological,meshulam2009homological,Costa12DCG,Costa12asphericity,Costa15,Linial14phase}.  Although we focus on the random $d-$complex, we alert the reader of the existence of a richer theory of random complexes and topological data analysis \cite{Carlsson14, kahle2014topology, bobrowski2014topology, Kahle16, Costa12}.

The focus of many studies on random $d-$complexes has been the two non-trivial Betti numbers of the complex: $\beta_{d-1}(\cdot)$ and $\beta_d(\cdot)$. The starting part of our study is the following fine phase transition result for $\beta_{d-1}(Y_{n,d}(p))$.
\begin{lemma}
{\cite{stepanov1969combinatorial}, \cite[Theorem 1.10]{kahle2014inside}}
\label{lem:BettiNumber}
Fix $d \geq 1.$ Consider $Y_{n, d}(p_n)$ with
\begin{equation}
\label{eqn:pn}
p_n = \frac{d \log n + c - \log(d!)}{n}
\end{equation}
for some fixed $c \in \bR.$ Then, as $n \to \infty,$ $\beta_{d - 1}(Y_{n, d}(p_n)) \Rightarrow \Poi(e^{-c}),$ where $\Poi(\lambda)$ stands for the Poisson random variable with mean $\lambda$ and $\Rightarrow$ denotes convergence in distribution.
\end{lemma}
The proof of this result proceeds as follows: First, it is shown that $N_{n, d - 1}(p_n) \Rightarrow \Poi(e^{-c}),$ where $N_{n , d  - 1}(p)$ denotes the number of isolated $(d - 1)-$faces in $Y_{n, d}(p).$ Then, for $p_n$ as chosen, it is established that $N_{n, d - 1}(p_n)$ completely determines the behaviour of the $(d - 1)-$th Betti number \dy{(see also Appendix \ref{app:cohomology})}. Building upon this relation, one also has that $\Pr\{\beta_{d - 1}(Y_{n, d}(p_n)) = 0\} \to 1$ if $np_n - d\log n \to \infty$ and $\Pr\{\beta_{d - 1}(Y_{n, d}(p_n)) = 0\} \to 0$ if $np_n - d\log n \to -\infty.$ These were proven  by Erd\H{o}s and R\'{e}nyi \cite{Erdos59} in 1959 for $d=1$, much later by Linial and Meshulam  \cite{linial2006homological} in 2006 for $d=2$ and shortly thereafter in 2009 for $d \geq 3$ by Meshulam and Wallach \cite{meshulam2009homological}.

One of the goals of this paper is to generalize Lemma~\ref{lem:BettiNumber} first to the level of persistence diagrams and then to that of minimal spanning acycles of randomly weighted $d-$complexes. Before providing the actual statements, we give a formal definition of these weighted complexes.
\begin{definition}
\label{defn:Generically_Perturbed_Weighted}
Let $d \geq 1$ be some integer. Consider $n$ vertices and let $\K_{n}^{d}$ be the complete $d-$skeleton on them. Let $\phip : \K_{n}^{d} \to [0,1]$ be the weight function with the following properties:
\begin{enumerate}
\item $\phip(\sigma) = 0$ for $\sigma \in \bigcup_{i = 0}^{d - 1} \F^i,$ and
\item $\phip(\sigma) = \phi(\sigma) + \epsilon_n(\sigma)$ for $\sigma \in \F^{d}.$  
\end{enumerate}
Here, $\{\phi(\sigma) : \sigma \in \F^{d}\}$ are real valued i.i.d. random variables with \gt{(cumulative) distribution function $\sF : \Real \to [0, 1]$} perturbed respectively by $\{\epsilon_n(\sigma) :  \sigma \in \F^{d}\}$. The latter are  another set of real valued random variables not necessarily identically distributed or independent of each other or $\phi(\sigma)$'s.
The {\em randomly weighted $d-$complex} $\Lp_{n, d}$ is the simplicial complex $\K_n^d$ weighted by $\phip.$ Associated with $\Lp_{n, d}$ is the canonical simplicial process given by the filtration $\{\Lp_{n, d}(t): t \in \Real\},$ where $\Lp_{n, d}(t) = \{\sigma: \phip (\sigma) \leq t\}.$
\end{definition}
For ease of use, we shall write $\sigma \in \Lp_{n, d}$ to mean $\sigma \in \K_{n}^{d}.$  Similarly, $\F^i(\Lp_{n, d})$ shall mean $\F^i(\K_{n}^{d})$ and so on. Finally, let $\|\epsilon_n\|_{\infty} := \max_{\sigma \in \F^{d}(\Lp_{n, d})} |\epsilon_n(\sigma)|.$

Our key result concerning randomly weighted $d-$complexes is that if the perturbations decay sufficiently fast, then suitably scaled point processes related to the nearest face distances, weights of the faces in the $d-$minimal spanning acycle, and death times in the associated persistence diagram all converge to \dy{the same} inhomogeneous Poisson point process. The proof crucially relies upon Theorems \ref{thm:death_MSA} and \ref{thm:l1_stability}.

Formally, we consider the following {\em three scaled point processes} on $\Real.$
\begin{enumerate}
\item (Extremal) nearest face distances, i.e., $\PoiFt := \{n \sF(C'(\sigma)) - d\log n + \log(d!) : \sigma \in \F^{d - 1}\},$ where, for $\sigma \in \F^{d - 1},$
\begin{equation}
\label{eqn:Connection Time}
C'(\sigma) := \min\limits_{\tau \in \F^{d}, \tau \supset \sigma } \phip(\tau).
\end{equation}

\item (Extremal) death times in $\Hg_{d - 1},$ i.e., $\PoiDt := \{n \sF(D'_i) - d\log n + \log(d!)\},$ where $\{D'_i\}$ is the set of death times in the persistence diagram of $\Hg_{d - 1}$ (see Definition \ref{defn:death_times}).

\item (Extremal) face weights in $M',$ i.e., $\PoiMt := \{n \sF(\phip(\sigma)) - d\log n + \log(d!): \sigma \in M'\},$ where $M'$ is a $d-$minimal spanning acycle in $\Lp_{n, d}$ (see \eqref{defn:MSA}).
\end{enumerate}

\gt{Observe that  the scaling used in the definitions of each of $\PoiFt, \PoiDt,$ and $\PoiMt$ pushes quantities less than the $d \log(n)/n$ threshold to $-\infty,$ asymptotically. In that sense, asymptotically, the three processes only consider the extremal values, i.e., those that are above this threshold.}\dy{The reason for transforming weights, \gt{as will be seen below,} is that it yields a limiting point process independent of $\sF$. If we think of the weighted complex as a dynamic complex with simplices being added at times equal to their weights, then the transformation by $\sF$ is nothing but a time-change.} 

\gt{At first glance, these are three distinct point processes on $\Real$ and there are no obvious reasons why they ought to be connected. However, by applying Theorem \ref{thm:death_MSA}, we get $\PoiM = \PoiD$ and, from Corollary~\ref{cor:nng_MSA} that we establish later, it follows that $\PoiFt \subset \PoiMt.$ A natural guess based on this would be that a similar relation holds  amongst the three processes asymptotically as well. Surprisingly, the below result shows that the three processes in fact have the same asymptotic behaviour. }

\begin{theorem}
\label{thm:perturbed_process_convergence}
Suppose that $\sF$ is \gt{Lipschitz continuous}. If $n \|\epsilon_n\|_{\infty} \to 0$ in probability, then each of $\PoiFt, \PoiDt,$ and $\PoiMt$ converges vaguely in distribution to $\PoiP$, where $\PoiP$  is the Poisson point process with intensity $e^{-x}\md x$ on $\bR$.
\end{theorem}
Since the $(d - 1)-$faces have zero weights, the birth times in the persistence diagram of $\Hg_{d - 1}$ are all zero. Hence, if $\|\epsilon_n\|_{\infty} = 0$ and $\sF$ is the distribution function of $U[0, 1],$ then $\PoiDt((c,\infty)) = \beta_{d-1}(Y_{n,d}(p_n))$ for $p_n$ and $c$ as in Lemma \ref{lem:BettiNumber}. Thus, a point process convergence for $\PoiDt$ in this special case implies Lemma \ref{lem:BettiNumber} as a corollary. This and more follows from the above result. See Figure \ref{fig:lm_example}(a) for simulations of $\PoiDt$ for $d = 1,2,3,4$ in the above special case.

To the best of our knowledge, a point process convergence result as above is not known even for complete graphs with i.i.d. uniform $[0,1]$-weights, which might be considered as a mean-field model for random metric spaces. For random geometric graphs, such a point process convergence result for extremal edge weights of the minimal spanning tree was proven in \cite{Penrose97,Hsing05}. These results were important to understand the connectivity of random geometric graphs. However, reversing the scenario, we have gone from results on connectivity (i.e., $\Hg_k(\cdot)$ persistence diagrams) to those for minimal spanning acycles.

The above weak convergence result along with the continuous mapping theorem yields asymptotics of various statistics of $\PoiDt$.  Our result could be useful in deriving asymptotics for extremes of other summary statistics of persistence diagrams such as persistence landscapes \cite{Bubenik15}, homological scaffolds \cite{Petri14} or accumulative persistence function \cite{accumulative2016}.

As for our proof, we first deal with the case when $\sF$ is the distribution function of $U[0,1]$ and $\epsilon_n(\sigma) \equiv 0.$ We use the factorial moment method to show convergence of the first point process and then use cohomology theory to show that this is a good enough approximation for the second point process. This yields convergence of the second point process . Finally, this along with Theorem~\ref{thm:death_MSA} gives the convergence of the third point process (see Section \ref{sec:uniformly_Weighted_Complex}). This approach is inspired by those of \cite{linial2006homological, meshulam2009homological, kahle2014inside}. Next, we extend this result to the case of the more general i.i.d. weights. Finally, we complete the proof of Theorem \ref{thm:perturbed_process_convergence} by using our stability result (Theorem \ref{thm:l1_stability}) as well as showing that the topology of bottleneck distance between Radon counting measures is stronger than vague topology (see Section \ref{sec:extensions}).
%

We now present one more powerful consequence of our stability result. While it is believed that introducing weak dependencies between the random variables should not affect the asymptotics, it is  often difficult to prove such a statement rigorously. As we \gt{again illustrate}, our stability result helps bridge this gap in certain situations. In particular, given an arbitrary random complex, it enables one to translate certain limit theorems to noisy variants of this complex once the same has been shown in the noiseless setting.

Consider $\Lp_{n, d}$ from Definition~\ref{defn:Generically_Perturbed_Weighted} and suppose that $\sF$ is the distribution function of $U[0, 1].$ Further, let $\Dgm(\K_n^d,\phip) = \{(0,D'_i)\}$ be the $\Hg_{d-1}$ persistence diagram. Let us define the (weighted) lifetime sums for $\alpha \geq 0$ as
\begin{equation}
\label{eqn:lifetimeSum}
(L_{n,d-1}')^{\alpha} = \sum_i (D'_i)^{\alpha}.
\end{equation}
To begin with, suppose that $\|\epsilon_n\|_{\infty} = 0$ for all $n \geq 1$. \dy{In such a case, we denote the weighted random complex by $\U_{n,d}$ and the corresponding lifetime sum by $L_{n,d}$.} Then, it follows from a remarkable recent result by Hino-Kanazawa (\cite[Theorem 4.11]{Hino2018}) that, for $\alpha > 0$
\begin{equation}
\label{eqn:hino}
n^{-(d-\alpha)}\Exp[(L_{n,d-1})^{\alpha}] \to I^{\alpha}_{d-1},
\end{equation}
where $I^{\alpha}_{d-1}$ is an explicitly defined constant (see \cite[(4.10)]{Hino2018} for the definition of constants and \cite[Section 4.4]{Hino2018} for more concrete expressions).  In the special case of $d = 1, \alpha = 1$, this is the famed result of Frieze \cite{Frieze85} for random minimal spanning trees with $I^1_0 = \zeta(3)$ where $\zeta$ is the Riemann-zeta function. Further, $I^p_0$ for $p \in \{1,2,\ldots\}$ are shown to be linear combinations of $\zeta(3), \zeta(4),$ etc. Using our stability result, we now extend this result to the noisy case. The proof can be found in Section \ref{sec:extensions}.
\begin{corollary}
\label{thm:lifetimesum}
Fix a $p,d \in \{1,2,\ldots\}$. Assume that $\sF$ is the distribution function of $U[0,1]$ and that $ \sup_{\sigma \in \F^d} \Exp[|\epsilon_n(\sigma)|^p] = o(n^{-(p+1)})$. Then, with $I^p_{d-1}$ as defined in \eqref{eqn:hino}, we have that
\[ n^{-(d-p)}\Exp[(L')^p_{n,d-1}] \to I^p_{d-1}.\]
\end{corollary}

\section{Preliminaries}
\label{sec:prelim}

We describe here the basic notions of simplicial homology, persistent homology, and point processes. We remark that, in an earlier version of the paper (see \cite[Appendix B]{Skraba17}), we have rephrased our topological notions in the language of matrices for \dy{an alternative and computationally convenient viewpoint.}

\subsection{Topological notions}
\label{sec:topology}

We point out that we shall always choose our coefficients from a field $\bF$. In this regard, $0$ stands for additive identity, $1$ stands for multiplicative identity and $-1$ for the additive inverse of $1$. An often convenient choice in computational topology is $\bF = \bZ_2$ in which case $1 = -1$.

\subsubsection{Simplicial Homology}
\label{sec:simplicial_homology}
For a good introduction to algebraic topology, see \cite{Hatcher02}, and for simplicial complexes and homology, see \cite{Edelsbrunner10,Munkres84}.

Let $\K$ be a simplicial complex (see Definition~\ref{def:complex}). We assume throughout that all our simplicial complexes are defined over a finite set $V$. 
The $0$-faces of $\K$ are also called as {\em vertices}. When obvious, we shall omit the reference to the underlying complex $\K$ in the notation. A $d-$simplex $\sigma$ is often represented as $[v_0,\ldots,v_d]$ to explicitly indicate the subset of $V$ generating the simplex $\sigma$.

An {\em orientation} of a $d-$simplex is given by an ordering of the vertices and denoted by $[v_0, \ldots, v_d].$ Two orderings induce the same orientation if and only if they differ by an even permutation of the vertices. In other words, for a permutation $\pi$ on $[d]$,
$$ [v_0,\ldots,v_d] = (-1)^{sgn(\pi)}[v_{\pi(0)},\ldots,v_{\pi(d)}], $$
\gt{where $sgn(\pi)$ denotes the sign of the permutation $\pi.$}
We assume that each simplex in our complex is assigned a specific orientation (i.e., ordering).

Let $\bF$ be a field. A simplicial $d-$chain is a formal sum of oriented $d-$simplices $\sum\limits_{i} c_i \sigma_i, c_i \in \bF.$ The free abelian group generated by the $d-$chains \gt{is called the $d-$th chain group and is denoted by  $C_d(\K).$ Formally,} 
\[
C_d(\K) := \left\{ \sum_ic_i \sigma_i : c_i \in \bF, \sigma_i \in \F^d(\K) \right\}.
\]
Clearly, $C_d(\K)$ is a $\bF$-vector space. We shall set $C_{-1} = \bF$ and $C_d = 0$ for $d = -2,-3,\ldots$. For a vector space, let $\beta(\cdot)$ denote its rank. Thus, $\beta(C_d) = f_d$ for $d \geq 0$. For $d \geq 1, $ we define the boundary operator $\bdr_d : C_d \To C_{d - 1}$ first on each $d-$simplex using
$$ \bdr_d([v_0, \ldots, v_d]) = \sum_{i=0}^d(-1)^i[v_0,\ldots,\hat{v_i},\ldots,v_d],$$
and then extend it linearly on $C_d.$ Above, $\hat{v_i}$ denotes that $v_i$ is to be omitted. $\bdr_0$ is defined by setting $\bdr_0([v]) = 1$ for all $v \in \F^0$. It can be verified that $\bdr_d$ is a linear map of vector spaces and more importantly that $\bdr_{d-1} \circ \bdr_d = 0$ for all $d \geq 1,$ i.e., boundary of a boundary is zero. When the context is clear, we will drop the dimension $d$ from the subscript of $\partial_d.$

Note that the free abelian group of $d-$chains is defined only using $\F^d(\K)$. When we use a subset $S \subset \F^d(\K)$ of $d-$faces rather than the entire collection of $d-$faces to generate the free abelian group, we shall use $C_d(S)$ to denote the corresponding free abelian (sub)group of $d-$chains. In other words, $C_d(S) = C_d(\K^{d-1} \cup S)$. \remove{Also, for a $d-$chain $\sum_i a_i \sigma_i$, the support denoted as $supp(\sum_i a_i \sigma_i)$ is $\{\sigma_i : a_i \neq 0\} \subset \F^d$. For  $\mathbb{F} = \mathbb{Z}_2$, the support characterizes the $d-$chain.}

The $d-$th {\em boundary space} denoted by $B_d$ is $\im \bdr_{d + 1}$ and the $d-$th {\em cycle space} $Z_d$ is $\ker \bdr_d$.  Elements of $Z_d$ are called {\em cycles} or {\em $d-$cycles} to be more specific. The $d-$dimensional (reduced)\footnote{Reduced is used to refer to the convention that $C_{-1} = \bF$ instead of $C_{-1} = 0$.} homology group is then defined as the quotient group
\be
\label{eqn:hom_gp}
\Hg_d = \frac{Z_d}{B_{d}}.
\ee
Again, since we are working with field coefficients, $B_d,Z_d$ and $\Hg_d$ are all $\bF-$vector spaces. \red{The bases of these vector spaces form a matroid (\cite{Oxley03,Welsh76}). This implies that certain concepts such as the span of a generating set and properties such as the exchange property automatically hold. While it is not necessary for understanding our results, a familiarity with matroids is helpful.}

The $d-$th {\em Betti number} of the complex $\beta_d(\K)$ is defined to be the rank of the vector space $\Hg_d$. Respectively, let $b_d(\K) := \beta(B_d)$ and $z_d(\K) :=  \beta(Z_d)$ denote the ranks of the $d-$th boundary and $d-$th cycle spaces, respectively. Thus, we have that $\beta_d = z_d - b_d.$ Note that we drop the adjective reduced henceforth, but all our homology groups and Betti numbers are indeed reduced ones. Some authors prefer to use $\tilde{H}_d$ and $\tilde{\beta}_d$ to denote reduced homology groups and Betti numbers respectively, but we refrain from doing so for notational convenience. However, under such a notation, we note that $\beta_d - \tilde{\beta}_d = 1[d = 0]$. This gives an easy way to translate results for reduced Betti numbers to Betti numbers and vice-versa. We denote the Euler-Poincar\'{e} characteristic by $\chi$ and the Euler-Poincar\'{e} formula holds as follows:
\begin{equation}
\label{eqn:EP_formula}
\chi(\K) = \sum_{j=0}^{\infty}(-1)^j f_j(\K) = 1 + \sum_{j=0}^{\infty}(-1)^j \beta_j(\K).
\end{equation}

An important property of homology groups that is often of use is the following: If $\K_1,\K_2$ are two complexes such that the function $h : \K^0_1 \to \K^0_2$ is a {\em simplicial map} (i.e., $\sigma_1 =[v_0,\ldots,v_d] \in \K_1$ implies that $h(\sigma_1) = [h(v_0),\ldots,h(v_d)] \in \K_2$ for all $d \geq 0$), then there exists an homomorphism $h_* : \Hg_d(\K_1) \to \Hg_d(\K_2)$ called the {\em induced homomorphism} between the homology groups. \gt{If $\K_1 \subseteq \K_2,$ then a natural simplicial map is the inclusion map from $\K_1$ to $\K_2.$ The case} \red{of multiple inclusions} \gt{now} \red{brings us to persistent homology.}
\remove{A tool from algebraic topology that we use here is the {\em Mayer-Vietoris sequence}. Below we state it explicitly and its implication for Betti numbers.

A sequence of vector spaces $V_1,\ldots,V_l$ and linear maps $\nu_i: V_i \to V_{i+1}$,  $i=1,\ldots,l-1$ is said to be {\em exact} if $\im \,\nu_i = \ker \, \nu_{i+1}$ for all $i = 1,\ldots,l-1$.
\begin{lemma}(Mayer-Vietoris Sequence \cite[Theorem 25.1]{Munkres84} , \cite[Corollary 2.2]{delfinado1993incremental}.) \label{lem:MV_complexes} \\
\noindent Let $\K_1$ and $\K_2$ be two finite simplicial complexes and $\K_0 = \K_1 \cap \K_2$ (i.e., $\K_0$ is the complex formed from all the simplices that are in both $\K_1$ and $\K_2$). Then the following are true:
\begin{enumerate}
\item The following is an exact sequence, and, furthermore, the homomorphisms $\nu_d$ are  induced by the respective inclusions:
\begin{align*}
\label{eqn:MV}
& \cdots\to \Hg_d(\K_0) \stackrel{\nu_d}{\to} \Hg_d(\K_1)\oplus \Hg_d(\K_2) \to \Hg_d(\K_1 \cup \K_2)
\\ &\qquad\qquad\qquad \qquad\qquad\qquad
\to \Hg_{d-1}(\K_0) \stackrel{\nu_{d-1}}{\to} \Hg_{d-1}(\K_1)\oplus \Hg_{d-1}(\K_2)\to \cdots
\end{align*}

\item Furthermore,
\be
\label{eqn:MV_identity}
\beta_d(\K_1 \cup \K_2) \ = \ \beta_d(\K_1) + \beta_d(\K_2)+ \beta(\ker\nu_d) +\beta(\ker \nu_{d-1})-\beta_d(\K_0).
\ee

\end{enumerate}
\end{lemma}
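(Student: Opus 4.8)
The plan is to obtain both parts from the short exact sequence of simplicial chain complexes and then extract the Betti-number identity by elementary rank-nullity bookkeeping along the resulting long exact sequence.

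For part (1), I would first isolate the one genuinely simplicial fact that makes Mayer-Vietoris work without any subdivision argument: since every simplex of $\K_1 \cup \K_2$ lies in $\K_1$ or in $\K_2$, one has $C_d(\K_1 \cup \K_2) = C_d(\K_1) + C_d(\K_2)$ as subspaces of the chains, while $C_d(\K_1) \cap C_d(\K_2) = C_d(\K_0)$, for every $d$ (and, with the reduced convention $C_{-1} = \bF$, also in degree $-1$, provided $\K_0 = \K_1 \cap \K_2 \neq \emptyset$). Consequently
\[
0 \to C_\bullet(\K_0) \xrightarrow{\ x \mapsto (x,-x)\ } C_\bullet(\K_1) \oplus C_\bullet(\K_2) \xrightarrow{\ (x,y) \mapsto x+y\ } C_\bullet(\K_1 \cup \K_2) \to 0
\]
is a short exact sequence of chain complexes whose two chain maps are induced by the respective inclusions. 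The zig-zag (snake) lemma of homological algebra then yields the long exact homology sequence displayed in the statement, with the maps $\nu_d$ induced by inclusions; this is precisely Theorem 25.1 of \cite{Munkres84} (see also \cite[Corollary 2.2]{delfinado1993incremental}), so at this point I would either reproduce that standard argument or simply cite it.

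For part (2), write the relevant segment of the sequence as
\[
\Hg_d(\K_0) \xrightarrow{\ \nu_d\ } \Hg_d(\K_1) \oplus \Hg_d(\K_2) \xrightarrow{\ \psi_d\ } \Hg_d(\K_1 \cup \K_2) \xrightarrow{\ \partial_d\ } \Hg_{d-1}(\K_0) \xrightarrow{\ \nu_{d-1}\ } \cdots,
\]
so that exactness gives $\im \nu_d = \ker \psi_d$, $\im \psi_d = \ker \partial_d$, and $\im \partial_d = \ker \nu_{d-1}$. Rank-nullity for $\psi_d$ gives $\beta_d(\K_1) + \beta_d(\K_2) = \beta(\ker\psi_d) + \beta(\im\psi_d) = \beta(\im\nu_d) + \beta(\ker\partial_d)$; rank-nullity for $\nu_d$ gives $\beta(\im\nu_d) = \beta_d(\K_0) - \beta(\ker\nu_d)$; and rank-nullity for $\partial_d$ gives $\beta(\ker\partial_d) = \beta_d(\K_1 \cup \K_2) - \beta(\im\partial_d) = \beta_d(\K_1 \cup \K_2) - \beta(\ker\nu_{d-1})$. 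Substituting the latter two expressions into the first and rearranging produces exactly \eqref{eqn:MV_identity}. All ranks here are finite since the complexes are finite, so the subtractions are legitimate.

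I do not anticipate a serious obstacle: part (1) is the classical Mayer-Vietoris theorem and part (2) is a routine diagram chase. The only points that need care are (a) the chain-level decomposition $C_d(\K_1 \cup \K_2) = C_d(\K_1) + C_d(\K_2)$ together with the intersection identity $C_d(\K_1)\cap C_d(\K_2) = C_d(\K_0)$, which is precisely where the simplicial (as opposed to singular) setting is used and which makes the short exact sequence immediate, and (b) keeping the reduced-homology convention consistent throughout — in particular ensuring $\K_0 \neq \emptyset$ so that the augmented complexes still fit into the short exact sequence, an assumption that holds (or is harmless) in all of our applications.
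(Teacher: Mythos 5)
Your proof is correct: the short exact sequence of simplicial chain complexes with the zig-zag lemma gives part (1), and your rank--nullity bookkeeping along the exact segment does yield \eqref{eqn:MV_identity} exactly, with the reduced-homology caveat ($\K_0 \neq \emptyset$) rightly flagged. The paper does not prove this lemma but simply cites \cite[Theorem 25.1]{Munkres84} and \cite[Corollary 2.2]{delfinado1993incremental}, and your argument is essentially the standard one contained in those references, so there is nothing to add.
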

}
\subsubsection{Persistent Homology}
\label{sec:PersistentHomology}

A \emph{filtration} of a simplicial complex $\K$ is a sequence of subcomplexes $\{\K(t) : t \in \overline{\Real}\}$ satisfying
\begin{equation*}
\emptyset = \K(-\infty) \subseteq \K(t_1) \subseteq \K(t_2) \subseteq \K(\infty) = \K \qquad
\end{equation*}
for all $ -\infty < t_1  \leq t_2 < \infty.$ Put differently, the filtration $\{\K(t): t \in \overline{\Real}\}$ describes how to build $\K$ by adding collections of  simplices at a time. For more complete introduction and survey of persistent homology, see \cite{Edelsbrunner10,Carlsson09,Carlsson14}.

We now describe the natural filtration associated with \emph{weighted simplicial complexes}. Consider a simplicial complex $\K$ weighted by $w: \K \to \Real$ satisfying $w(\sigma) \leq w(\tau),$ whenever $\sigma, \tau \in \K$ and $\sigma \subset \tau.$ Functions having this property are called {\em monotonic functions} in \cite[Chapter VIII]{Edelsbrunner10}. As $w$ is monotone, $\{\K(t) : t \in \overline{\Real}\}$ with $ \K(t) := w^{-1}(-\infty, t]$ forms a \emph{sublevel set} filtration of $\K.$ Further note that $w$ induces a partial order on the faces of $\K.$ Assuming axiom of choice, this partial order can always be extended to a total order~\cite{szpilrajn1930extension}. Let $<_l$ denote one such total order. We make the standing assumption that for a given weight function $w$, the same total order $<_l$ is chosen and used throughout the paper.

One can now view the above sublevel set filtration associated with $(\K, w)$ in a dynamic fashion: as the parameter $t$ evolves over $\overline{\Real},$ $\K$ gets built one face at a time respecting the total order $<_l.$ In this way, with the addition of faces, the topology of $\K$ evolves. Clearly
\begin{equation}\label{eq:definition_minus}
\K(\sigma^-) : = \{\sigma_1 \in \K: w(\sigma_1) <_l w(\sigma)\}
\end{equation}
denotes the complex right before the face $\sigma$ is to be added. Thus, given a monotonic weight function $w$, we can construct a filtration with respect to the chosen total order $<_l$. We shall call this filtration the {\em canonical filtration} associated with the total order $<_l$ or a {\em linear filtration} 	of the weight function $w$.

To track the changes in topology, akin to the definition for homology given in \eqref{eqn:hom_gp}, we define the $(t_1, t_2)$-\emph{persistent homology group} as the quotient group
$$
\Hg_d^{t_1,t_2} = \frac{Z_d^{t_1}}{Z_d^{t_1}\cap B_d^{t_2}}, \qquad t_1 \leq t_2.
$$

The information for all pairs $(t_1, t_2)$ can be encoded in a unique interval representation called a $\emph{persistence barcode}$ \cite{zomorodian2005computing} or equivalently a $\emph{persistence diagram}$ \cite{cohen2007stability}. Before giving the definition, we first note that for a finite simplicial complex endowed with a total ordering $<_l$, we can reindex the filtration by assigning a natural number to each simplex. We refer to this as a \emph{discrete weight} $w_\discrete$ corresponding to the monotonic weight function $w$ i.e., $w_\discrete (\sigma) < w_\discrete (\tau)$ iff $w(\sigma) <_l  w(\tau)$. Note that there is a bijection between total orders $<_l$ and weight function $w_\discrete$. Thus, the discrete weight has a natural, well-defined projection $\pi$ back to the original function values,
$$ \pi(i)  \mapsto w(\sigma) | w_\discrete(\sigma) = i $$
\begin{definition}
\label{def:pd}
Given a simplicial complex $\K$ with a monotonic function $w$ and the corresponding discrete weight $w_\discrete: \K \rightarrow \mathbb{N}$,  the $d-$th persistence diagram $\Dgm(\K, w_\discrete)$ is the multiset of points in the extended grid $\overline{\mathbb{N}}^2 $ such that the each point $(i, j)$ in the diagram represents a distinct class (i.e., a topological feature) in $ \Hg_d(w_\discrete^{-1}(-\infty,t])$ for all $t \in[i, j)$ and is not a class in $t \notin [i, j)$. The persistence diagram $\Dgm(\K, w)$ is then defined as the projection of the multiset of points under $\pi$, i.e., $(i,j)\in \Dgm(\K, w_\discrete)$ iff  $(\pi(i),\pi(j)) \in \Dgm(\K,w)$.
\end{definition}

This differs from the typical definition of a persistence diagram, where the existence and uniqueness of the persistence diagram  is defined in terms of an algebraic decomposition into interval modules see ~\cite{chazal2012structure,crawley2015decomposition}. For technical reasons, this approach generally discards the points on the diagonal, i.e., topological features which are both born and die at time $t$.  In the above definition, the total order guarantees that there are no points on the diagonal of the discrete filtration. However, since we deal with the restricted setting of piece-wise constant functions on finite simplicial complexes, we do not lose any information; indeed, we keep more of the chain level information. We then transform the persistence diagram back to the original monotone function.  After  the transformation, points may lie on the diagonal and, as we shall see, we do require these points.

Our definition is used implicitly in ~\cite{zomorodian2005computing}, which first identified the algebraic decomposition as a consequence of the structure theorem of finitely generated modules over a principle ideal domain. This applies in this setting since the homology groups of finite simplicial complexes are always finitely generated.  Therefore, we could have equivalently defined the diagram using the decomposition directly as done in Corollary 3.1 in ~\cite{zomorodian2005computing}, as the modified Smith Normal Form of the boundary operator~\cite{skraba2013persistence}. We believe that our definition is more accessible to a non-algebraic audience and is included for completeness. But more important for us are birth and death times defined below.
\begin{definition}
\label{defn:death_times}
The \emph{death times} (respectively \emph{birth times}) of the filtration associated with $(\K, w)$ are equal to the multiset of $y$-coordinates ($x$-coordinates) of points in $\Dgm(\K, w)$.
\end{definition}
We now discuss the notion of negative and positive faces which are vital to our proofs. 
\begin{lemma}(\cite[Section 3]{delfinado1993incremental})
\label{lem:delfinado}
Let $\K$ be a simplicial complex on vertex set $V$ and $\sigma \subset V$ be a set of cardinality $d + 1$ in $V$ for some $d \geq 0.$ Additionally, assume that $\sigma \notin \K$ but $\partial \sigma \in C_{d-1}(\K)$. Then, $\beta_j(\K \cup \sigma) = \beta_j(\K)$ for all $j \notin \{d - 1, d\}.$ Further, one and only one of the following two statements hold:
\begin{enumerate}
\item $\beta_{d-1}(\K \cup \sigma) = \beta_{d-1}(\K) -1$ and $\beta_{d}(\K \cup \sigma) = \beta_d(\K).$
\item $\beta_d(\K \cup \sigma) = \beta_d(\K) + 1$ and $\beta_{d - 1}(\K \cup \sigma) = \beta_{d - 1}(\K).$
\end{enumerate}
\end{lemma}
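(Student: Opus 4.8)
The plan is to deduce this from the Mayer--Vietoris sequence applied to the pair $\K_1 = \K$ and $\K_2 = \sigma$ (viewed as the complex consisting of $\sigma$ together with all its proper faces, which already lie in $\K$ since $\bdr\sigma \in C_{d-1}(\K)$). Then $\K_0 = \K_1 \cap \K_2 = \bdr\sigma$, the boundary complex of $\sigma$, which is a triangulated $(d-1)$-sphere, and $\K_1 \cup \K_2 = \K \cup \sigma$. The complex $\K_2 = \sigma$ is a $d$-simplex together with its faces, hence contractible, so $\beta_j(\K_2) = 0$ for all $j \geq 0$ (recall we use reduced homology, so even $\beta_0 = 0$ for a point). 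The sphere $\bdr\sigma$ has $\beta_j(\K_0) = 0$ for all $j \neq d-1$ and $\beta_{d-1}(\K_0) = 1$.

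First I would read off, from the identity \eqref{eqn:MV_identity}, that for $j \notin \{d-1, d\}$ we have
\[
\beta_j(\K \cup \sigma) = \beta_j(\K) + \beta_j(\sigma) + \beta(\ker\nu_j) + \beta(\ker\nu_{j-1}) - \beta_j(\bdr\sigma).
\]
For such $j$ both $\beta_j(\sigma)$ and $\beta_j(\bdr\sigma)$ vanish, and since $\beta_j(\K_0) = 0$ for $j \notin \{d-1\}$ the maps $\nu_j$ and $\nu_{j-1}$ (for $j \notin \{d-1,d\}$, so that also $j - 1 \neq d-1$, i.e.\ $j \neq d$, which holds) emanate from zero spaces, forcing $\beta(\ker\nu_j) = \beta(\ker\nu_{j-1}) = 0$. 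Hence $\beta_j(\K \cup \sigma) = \beta_j(\K)$, which is the first assertion. The only subtlety here is that when $j = d$ one of the kernels involved is $\ker\nu_{d-1}$, which need not vanish; that is exactly why dimensions $d-1$ and $d$ are excluded.

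Next I would pin down the dichotomy in dimensions $d-1$ and $d$. The relevant portion of the exact sequence is
\[
0 = \Hg_d(\bdr\sigma) \to \Hg_d(\K) \oplus \Hg_d(\sigma) \to \Hg_d(\K \cup \sigma) \to \Hg_{d-1}(\bdr\sigma) \xrightarrow{\ \nu_{d-1}\ } \Hg_{d-1}(\K) \oplus \Hg_{d-1}(\sigma) \to \Hg_{d-1}(\K \cup \sigma) \to \Hg_{d-2}(\bdr\sigma) = 0,
\]
where I have used $\Hg_d(\sigma) = \Hg_{d-1}(\sigma) = 0$ and $\Hg_d(\bdr\sigma) = \Hg_{d-2}(\bdr\sigma) = 0$. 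Since $\Hg_{d-1}(\bdr\sigma) \cong \bF$ is one-dimensional, its image under $\nu_{d-1}$ is either $0$ or one-dimensional. I would split into these two cases. If $\nu_{d-1}$ is the zero map (equivalently, the class of the $(d-1)$-cycle $\bdr\sigma$ is already a boundary in $\K$, i.e.\ $[\bdr\sigma] = 0$ in $\Hg_{d-1}(\K)$), then exactness gives a short exact sequence $0 \to \Hg_d(\K) \to \Hg_d(\K\cup\sigma) \to \bF \to 0$, so $\beta_d(\K\cup\sigma) = \beta_d(\K) + 1$, and the tail $0 \to \Hg_{d-1}(\K) \to \Hg_{d-1}(\K\cup\sigma) \to 0$ gives $\beta_{d-1}(\K\cup\sigma) = \beta_{d-1}(\K)$; this is case (2). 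If instead $\nu_{d-1}$ is injective (the class $[\bdr\sigma]$ is nontrivial in $\Hg_{d-1}(\K)$), then $\Hg_d(\K\cup\sigma) \to \Hg_{d-1}(\bdr\sigma)$ is the zero map, so $\Hg_d(\K) \twoheadrightarrow \Hg_d(\K\cup\sigma)$ and in fact is an isomorphism (its kernel is the image of $\Hg_d(\bdr\sigma) = 0$), giving $\beta_d(\K\cup\sigma) = \beta_d(\K)$; and the tail $0 \to \bF \to \Hg_{d-1}(\K) \to \Hg_{d-1}(\K\cup\sigma) \to 0$ gives $\beta_{d-1}(\K\cup\sigma) = \beta_{d-1}(\K) - 1$; this is case (1). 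Since $\nu_{d-1}$ is either zero or injective and not both, exactly one of (1), (2) holds. Alternatively, and more slickly, one can derive the same conclusion purely numerically: apply \eqref{eqn:MV_identity} in dimensions $d-1$ and $d$, use $\beta(\ker\nu_j) + \beta(\text{im}\,\nu_j) = \beta(\Hg_j(\K_0))$, and observe that $\beta(\ker\nu_{d-1}) + \beta(\ker\nu_{d-2}) = \beta(\ker\nu_{d-1}) \in \{0,1\}$ while $\beta(\ker\nu_d) + \beta(\ker\nu_{d-1}) = \beta(\ker\nu_{d-1})$, giving $\beta_{d-1}(\K\cup\sigma) - \beta_{d-1}(\K) = -\big(1 - \beta(\ker\nu_{d-1})\big)$ and $\beta_d(\K\cup\sigma) - \beta_d(\K) = \beta(\ker\nu_{d-1})$, and $\beta(\ker\nu_{d-1}) \in \{0,1\}$ yields precisely the stated dichotomy.

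The main obstacle, such as it is, is bookkeeping: correctly identifying $\K_2 = \sigma$ and $\K_0 = \bdr\sigma$, recalling that $\bdr\sigma$ is a $(d-1)$-sphere with its single nontrivial reduced Betti number in degree $d-1$, and being careful that in degree $d$ the term $\beta(\ker\nu_{d-1})$ obstructs the naive vanishing argument — which is exactly the content of the dichotomy. There is also a mild degenerate case $d = 0$: then $\sigma$ is a vertex, $\bdr\sigma = \emptyset$ has $\beta_{-1}(\emptyset) = 1$ in the reduced convention ($C_{-1} = \bF$), and the argument goes through verbatim with $\Hg_{-1}(\bdr\sigma) \cong \bF$; I would note this so the statement covers $d \geq 0$ as claimed.
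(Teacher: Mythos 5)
Your argument is correct: the paper itself offers no proof of this lemma, citing it to Delfinado--Edelsbrunner, and your Mayer--Vietoris decomposition with $\K_1 = \K$, $\K_2$ the full simplex on $\sigma$, and $\K_0 = \partial\sigma$ a $(d-1)$-sphere is precisely the standard argument behind that citation, with the dichotomy correctly governed by whether $[\partial\sigma]$ vanishes in $\Hg_{d-1}(\K)$ (equivalently $\beta(\ker\nu_{d-1}) \in \{0,1\}$). Your handling of the degenerate case $d=0$ via the reduced convention $\beta_{-1}(\emptyset)=1$ is also the right way to make the statement hold as claimed.
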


From the definition of the cycle and boundary spaces, the above two numbered statements can be interpreted equivalently in the following manner which shall be useful for us:
\begin{eqnarray}
\beta_{d-1}(\K \cup \sigma) = \beta_{d-1}(\K) -1 & \Leftrightarrow & b_{d - 1}(\K \cup \sigma) = b_{d - 1}(\K) + 1 \Leftrightarrow \partial \sigma \notin \partial (C_d(\K)), \label{eqn:negative_image} \\
\beta_d(\K \cup \sigma) = \beta_d(\K) + 1 & \Leftrightarrow & z_d(\K \cup \sigma) = z_d(\K) + 1 \Leftrightarrow \partial \sigma \in \partial (C_d(\K)).
\label{eqn:positive_image}
\end{eqnarray}

\remove{
\begin{remark}
\label{rem:dAndHighFacesUnaffectBettid-1}
Let $\K$ be a simplicial complex with vertex set $V.$ Let $\sigma \subset V$ be such that $|\sigma| = d + 1,$ $\sigma \notin \K$ but $\partial \sigma \in C_{d-1}(\K).$ Then a trivial consequence of Lemma \ref{lem:delfinado} is that $\beta_{d - 1}(\K) \geq \beta_{d - 1}(\K \cup \sigma),$ i.e., addition of a $d-$face cannot increase $\beta_{d-1}.$ Now let $\sigma \subset V$ be such that $|\sigma| = d^\prime + 1,$ where $d^\prime > d,$ $\sigma \notin \K$ but $\partial \sigma \in C_{d^\prime - 1}(\K).$ Then, from Lemma~\ref{lem:delfinado}, it also follows that $\beta_{d - 1}(\K) = \beta_{d - 1}(\K \cup \sigma ) ,$ i.e, addition of a face with dimension higher than $d$ cannot affect $\beta_{d - 1}.$
Given $(\K, w),$ where $w$ induces a filtration, we emphasize that the labelling of faces as either positive or negative is unique.
\end{remark}
}

\begin{definition}[Positive and Negative faces]
\label{def:positive}
Let $\K$ be a complex with vertex set $V$ and $\sigma \subset V$ be a set of cardinality $d+1$ for some $d \geq
0$. Further assume that $\sigma \notin \K$ but $\partial \sigma \in C_{d-1}(\K) \neq 0$. Such a $\sigma$ is called a {\em negative face} w.r.t. $\K$ if
$\beta_{d-1}(\K \cup \sigma) = \beta_{d-1}(\K)- 1$, it is called a {\em positive face}
if it is not negative, i.e., 	$\beta_d(\K \cup \sigma) = \beta_d(\K) + 1$.
\end{definition}

This is useful for understanding how the topology evolves in the filtration associated with $w$ (recall \eqref{eq:definition_minus}).  If $\sigma$ is a $d-$face, then Lemma~\ref{lem:delfinado} shows that the relationship between the topology of the setup before and after addition of $\sigma$ is as follows: (i) $\beta_j(\K(\sigma^-) \cup \sigma) = \beta_j(\K(\sigma^-))$ for all $j \notin \{d, d - 1\},$ (ii) one and exactly one of the following is true:
\begin{equation}
\label{Defn:NegFace}
\beta_{d - 1}(\K(\sigma^-) \cup \sigma) = \beta_{d - 1}(\K(\sigma^-))-1
\end{equation}
or
\begin{equation}
\label{Defn:PosFace}
\beta_d(\K(\sigma^-) \cup \sigma) = \beta_d(\K(\sigma^-)) + 1.
\end{equation}
As in Definition~\ref{def:positive}, when \eqref{Defn:NegFace} holds (respectively \eqref{Defn:PosFace} holds) $\sigma$ will be called a negative face (positive face) w.r.t. the natural filtration of $(\K,w)$. We emphasize that the total order $<_l$ uniquely determines the label of faces as either positive or negative.  The above discussion can be neatly converted to an algorithm to generate birth and death times of the persistence diagram with respect to a given linear filtration of the weight function $w$.

\begin{algorithm}[ht!]
\caption{Incremental Persistence Algorithm}
\label{alg:incremental}
\begin{algorithmic}
\STATE {\bfseries Input:} $\K, w$
\STATE {\bfseries Main Procedure:}
\STATE  $F = \F$ (set of all faces in $\K$)
\STATE {\bfseries while} $F \neq \emptyset$
\begin{itemize}
\item remove a face $\sigma$ with minimum weight (w.r.t $<_l$) from $F$. Set $d = dim (\sigma)$.
\item if $\sigma$ is negative w.r.t. $\K(\sigma^-) ,$ i.e., if $\beta_{d-1}(\K(\sigma^-) \cup \sigma) = \beta_{d-1}(\K(\sigma^-)) - 1$ then add $w(\sigma)$ to $\mathcal{D}_{d-1}$ else add $w(\sigma)$ to $\mathcal{B}_d$.
\end{itemize}
\STATE {\bfseries Output} $\mathcal{B}_d, \mathcal{D}_d$, for all $d \geq 0$.
\end{algorithmic}
\end{algorithm}
The above algorithm is a simplification of the persistence algorithm in \cite[Fig. 5]{edelsbrunner2002topological} which also used negative and positive simplices. The simplification in our algorithm essentially lies in turning a blind eye to the information about the pairing between the birth and death times. The equivalence of negative faces with death times (and hence positive faces with birth times) was established in \cite[Fig. 9]{zomorodian2005computing}. These algorithms extended the incremental algorithm for computing Betti numbers in \cite{delfinado1993incremental}. We summarize the algorithm, especially for ease for future reference, as follows : Let $\sigma$ be a $d-$face in $\K$.
\begin{equation}
\label{eqn:death_birth_neg_pos}
w(\sigma) \in \mathcal{D}_{d-1} \Leftrightarrow \mbox{$\eqref{Defn:NegFace}$ holds or, \gt{alternatively,}
} \, \,  w(\sigma) \in \mathcal{B}_d \Leftrightarrow \mbox{$\eqref{Defn:PosFace}$ holds}
\end{equation}	
We end this subsection reiterating a remark with respect to our proofs.
\begin{remark}
\label{rem:uniqueness}
As already explained, if $\K$ is a weighted simplicial complex, there is a unique total ordering of the faces if the weight function is injective. Otherwise, it is only a partial ordering. However, this partial ordering can be extended to a total order. This correspondence between monotonic weights and total orders shall be used to simplify many of our proofs. We shall often prove many statements for weighted simplicial complexes with unique weights and appeal to this correspondence in extending the proof to general monotonic weight functions. Equivalently, one can prove results for $w_\discrete$ and then use the natural projection $\pi$ to obtain the corresponding result for monotonic weight function $w$.
\end{remark}

\subsubsection{Spanning acycles}
\label{sec:sp_acycle}

As made clear in the title, the other key object of our study is the {\em spanning acycle}, which has been already introduced in Definition \ref{def:SA}. We now discuss the definition in more detail. Apart from being more restrictive than that in \cite{hiraoka2015minimum,kalai1983enumeration}, our definition differs from that of \cite{hiraoka2015minimum} in its use of field coefficients over integer coefficients. Clearly, in the case of $d = 1$, $S$ is a {\em minimal spanning tree} on the graph $\K^1$. Strictly speaking, the above definition is that of a $d-$spanning acycle but since in most cases the dimension $d$ will be clear from the context, we shall not \dy{always} explicitly refer to the dimension $d$. Recall that for any $S \subseteq \K$, $w(S) = \sum_{\sigma \in S} w(\sigma)$ denotes the weight of $S$. Denoting the set of $d-$spanning acycles of $\K$ by $\S^d(\K)$, $S_0 \in \S^d(\K)$ is a {\em minimal spanning acycle} if
\begin{equation}
\label{defn:MSA}
w(S_0) = \min_{S \in \S^d(\K)}w(S).
\end{equation}
\remove{	\begin{remark}
\label{rem:SubsetWeights}
Let $\K$ be a simplicial complex with $S_1, S_2 \subset \F^d.$ Henceforth, unless stated otherwise, when we say $w(S_1) < w(S_2),$ we mean it in the spirit of Remark~\ref{rem:uniqueness}, i.e., the inequality is after the tie-breaking rule has been applied. Inequalities between other quantities retain the usual meaning.
\end{remark}
}
Spanning trees and more generally connectivity in the case of graphs can be extended in a multitude of ways to higher-dimensions. Betti numbers and acycles represent one possible (and indeed a very satisfying) way to generalize to higher dimensions. Another common generalization is via the notion of a {\em hypergraph}. In this context, one can define a hypergraph on a simplicial complex by  considering all the faces as hyper-edges. \red{We will not use hypergraph connectivity in this paper, but we only remark that studying the hypergraph connectivity of spanning acycles yields interesting results. }

\red{
\begin{remark}
We would like to highlight one more interpretation of the spanning acycles before continuing. As will no doubt be known or obvious to experts in the field, an alternative view of \textbf{a spanning acycle is as a basis for the space of boundaries}. Indeed, Algorithm 1 maintains a basis and, for insertion, checks whether the boundary of a simplex is in the span of the current basis or not. If it \gt{is} linearly independent, the simplex (or more accurately its \gt{weight} \remove{boundary}) \gt{is added to the list of death times, otherwise it is added to the set of birth times.} \remove{ otherwise it does not. }
\end{remark}}

\subsection{Probabilistic notions}
\label{sec:probability}

We give here a brief introduction to point processes on $\Real$. For a more detailed reading on weak convergence of point processes, we refer the reader to \cite[Chapter 3]{resnick2013extreme}. Let $\borR$ be the Borel $\sigma-$algebra of subsets in $\bR.$

\remove{ \begin{definition}
For $x \in \bR$ and $A \in \borR,$ define
\[
\delta_{x}(A) =
\begin{cases}
1 & \text{ if $ x \in A$,}\\
0 & \text{ otherwise.}
\end{cases}
\]
Then the function $m: \borR \to \mathbb{Z}_{\geq 0}$ given by
\[
m(\cdot) = \sum_{i = 1}^{\infty} \delta_{x_i} (\cdot),
\]
where $\{x_i\}$ is some countable (possibly finite) collection of points in $\bR$ such that $m(K) < \infty$ for every compact set $K
\in \borR,$ denotes a {\em point measure} on $\bR.$  A point measure is {\em simple} if $m(\{x\}) \leq 1$ for each
$x \in \bR.$
\end{definition}
}

A {\em point measure} on $\Real$ is a map from $\borR$ to the set of natural numbers, i.e., it is a Radon (locally-finite) counting measure. A point measure $m$ is represented as $m(\cdot) = \sum_{i = 1}^{\infty} \delta_{x_i} (\cdot),$ for some countable but locally-finite collection of points $\{x_i\}$ in $\bR$ and where $\delta_x(\cdot)$ denotes the delta measure at $x$. Alternatively, we define the support of the point measure $m$, denoted by $supp(m)$ as the multi-set $\{x_i\}$. A point measure is {\em simple} if $m(\{x\}) \leq 1$ for each $x \in \bR.$ Let $M_{p}(\bR)$ denote the set of all point measures on $\bR$. Also, let $\ccR$ denote the set of all continuous, non-negative functions $f: \bR \to \bR$ with compact support. For $f \in \ccR$ and $m = \sum_{i = 1}^{\infty} \delta_{x_i} \in M_{p}(\bR),$ define
\begin{equation}
\label{eqn:exp_wrt_pointm}
m(f) := \int_{\bR} f dm = \sum_{i = 1}^{\infty} f(x_i).
\end{equation}
Let $m_{n}, m \in M_{p}(\bR).$ We will say that $m_{n}$ converges {\em vaguely} to $m,$ denoted $m_{n} \overset{v}{\to} m,$ if for every $f \in \ccR,$ $ m_{n}(f) \to m(f)$. Using this notion of vague convergence, one defines the vague topology  on $M_{p}(\bR).$ That is, a subset of $M_{p}(\bR)$ is vaguely closed if it includes all its limit points w.r.t. vague convergence. The sub-base for this topology consists of open sets of the form
\[
\{m \in M_{p}(\bR) : m(f) \in (s, t)\}
\]
for $f \in \ccR, \, \, s, t \in \bR, \, \, s < t.$

A {\em point process} $\sP$ on $\bR$ is a random variable taking values in the space $(M_{p}(\bR), \M_{p}(\bR))$ where $\M_{p}(\bR)$ denotes the Borel $\sigma$-algebra generated by the vague topology. A point process is called {\em simple} if $\sP(\{x\}) \leq 1$ a.s. \gt{for all $x$, i.e., if it is} supported on simple point measures. An oft-used example of a point process is the Poisson point process.
\begin{definition}
\
Let $\mu : \bR \to [0, \infty)$ be locally integrable ($\int_{A} \mu(x) dx < \infty$ for all bounded $A \subset \bR$). A point process $\sP$ on $\bR$ is said to be a {\em Poisson point process} with intensity function $\mu$  if the following two properties hold.
\begin{enumerate}
\item For disjoint $A_1,\ldots,A_m \in \borR$, $\sP(A_1), \ldots, \sP(A_m)$ are independent.

\item For any $A \subseteq \bR,$ $\sP(A)$ is a Poisson random variable with mean $\int_{A} \mu(x) dx.$
\end{enumerate}
\end{definition}
\begin{definition}
Let $\sP_{n}, \sP$ be point processes on $\bR$, not necessarily defined on the same probability
space. We will say that $\sP_n$ converges weakly to $\sP,$ denoted $\sP_{n} \Rightarrow \sP,$ if
\[
\mathbb{E}[f(\sP_n)] \to \mathbb{E}[f(\sP)]
\]
for all continuous and bounded $f: (M_{p}(\bR), \M_p(\bR)) \to \bR.$ This is equivalent to saying
\[
\lim_{n \to \infty} \Pr\{\sP_{n} \in A\} = \Pr\{\sP \in A\}
\]
for all $A \in \M_{p}(\bR)$ such that $\Pr\{\sP \in \partial A\} = 0.$ Here $\partial A$ denotes the boundary of $A.$
\end{definition}

An alternative topology on $M_p(\bR)$ that arises naturally in computational topology is the so-called {\em bottleneck distance} $d_B.$ Note that we require a modified definition for point measures in $\bR$ rather than the more standard definition for persistence diagrams (e.g. \cite{chazal2009proximity,Edelsbrunner10}).

\begin{definition}
\label{def:db_point}
For $m_1,m_2 \in M_p(\bR)$
\[ d_B(m_1,m_2) := \inf_{\gamma} \sup_{x : supp(m_1)} |x - \gamma(x)|,\]
where the infimum is over all possible bijections $\gamma : supp(m_1) \to supp(m_2)$ between the multi-sets. If no bijection exists, set $d_B(m_1,m_2) = \infty$.
\end{definition}
Though this is not a metric in the classical sense, taking $\min\{d_B,1\}$ we obtain a metric on $M_p(\bR)$. More importantly, the topology induced by $d_B$ and $\min\{d_B,1\}$ are the same. We shall prove in Lemma \ref{lem:DB-Vague} that this topology is stronger than that of vague topology.

\section{Minimal spanning acycles}
\label{sec:MSA}
\gt{Our main goal here is to derive Theorems~\ref{thm:death_MSA} and \ref{thm:l1_stability}.} Additionally, we introduce several relevant combinatorial properties of (minimal) spanning acycles. To avoid tedium, we do not always single out the results for the case of minimal spanning tree, i.e., \gt{the $d = 1$ case}; since these are classical results, one can refer to \cite{Algorithms09, MSTWiki} for graph-theoretic (and expectedly simpler) proofs. Some of these results are direct consequences of the fact that the space of boundaries is a vector space \gt{and, hence, allows for a natural matroid to be defined}. Others, such as Kruskal's algorithm are folklore, but are included here for completeness as they do not appear in the literature for acycles.

\subsection{Basic Properties}

\gt{Our first aim here is to show} that a \gt{$d$-spanning acycle} exists if and only if \gt{$\beta_{d-1}(\K)=0.$} \gt{We establish this via a series of results.} \dy{As introduced in Definition \ref{def:SA}, a maximal acycle  is a natural analogous notion of a spanning forest, however \gt{note that} we mainly focus on a spanning acycle in the paper.} We \gt{begin by showing} that if a spanning acycle exists for a complex $\K$, then $\beta_{d-1}(\K) = 0$. \gt{If $S$ is a spanning acycle, then $\beta_{d - 1}(\K^{d - 1} \cup S) = 0$  by definition.} \remove{
By definition, a spanning acycle must have $\beta_{k-1}(\K^{d-1})=0$.} That this extends to the complete skeleton follows from the following corollary of Lemma~\ref{lem:delfinado}.

\begin{corollary}
\label{cor:dAndHighFacesUnaffectBettid-1}
Let $\K$ be a simplicial complex with $S_1 \subset S_2 \subset \F^d.$ Then, for any $j \geq d,$
\[
\beta_{d - 1}(\K^{d - 1}) \geq  \beta_{d - 1}(\K^{d - 1} \cup S_1) \geq \beta_{d - 1}(\K^{d - 1} \cup S_2) \geq \beta_{d - 1}(\K^d) = \beta_{d - 1}(\K^j) = \beta_{d - 1}(\K) \geq 0.
\]
\end{corollary}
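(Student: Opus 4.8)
The plan is to prove each inequality in the chain by repeatedly invoking Lemma~\ref{lem:delfinado}. The key observation is that all four complexes appearing in the statement share the same $(d-1)$-skeleton $\K^{d-1}$, and they differ only in which $d$-faces are included: $\K^{d-1}$ has none, $\K^{d-1}\cup S_1$ has the faces in $S_1$, $\K^{d-1}\cup S_2$ has the faces in $S_2$, and $\K^d$ has all of $\F^d$. Since $S_1 \subseteq S_2 \subseteq \F^d$, we can pass from each complex to the next by adding $d$-faces one at a time, respecting an arbitrary ordering of the faces to be added.

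First I would establish the basic monotonicity step: if $\L$ is any simplicial complex with $\L^{d-1} = \K^{d-1}$ and $\sigma \in \F^d(\K) \setminus \L$ is a $d$-face, then $\beta_{d-1}(\L \cup \sigma) \le \beta_{d-1}(\L)$. Indeed, since $\sigma$ is a $d$-face of $\K$ and $\K^{d-1} = \L^{d-1} \subseteq \L$, we have $\partial\sigma \in C_{d-1}(\L)$, so Lemma~\ref{lem:delfinado} applies: either $\beta_{d-1}(\L\cup\sigma) = \beta_{d-1}(\L) - 1$ (the negative case) or $\beta_{d-1}(\L\cup\sigma) = \beta_{d-1}(\L)$ (the positive case, together with the fact that $\beta_j$ is unchanged for $j \notin\{d-1,d\}$). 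In either case $\beta_{d-1}$ does not increase. Note one must handle the degenerate possibility $\partial\sigma = 0$ separately; but when $\partial\sigma = 0$, adding $\sigma$ only affects $\beta_d$ (it creates a new cycle), leaving $\beta_{d-1}$ untouched, so the inequality still holds. (Alternatively, for $d \ge 1$ and a complex containing all vertices, $\partial\sigma \ne 0$ automatically, so this case is vacuous in our setting.)

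Next I would chain these single-face steps. Enumerate $S_1 = \{\sigma_1,\dots,\sigma_k\}$, then $S_2 \setminus S_1 = \{\sigma_{k+1},\dots,\sigma_\ell\}$, then $\F^d \setminus S_2 = \{\sigma_{\ell+1},\dots,\sigma_m\}$, and set $\L_0 = \K^{d-1}$, $\L_i = \L_{i-1}\cup\sigma_i$. Then $\L_k = \K^{d-1}\cup S_1$, $\L_\ell = \K^{d-1}\cup S_2$, and $\L_m = \K^d$. Applying the monotonicity step at each stage gives
\[
\beta_{d-1}(\K^{d-1}) = \beta_{d-1}(\L_0) \ge \beta_{d-1}(\L_1) \ge \cdots \ge \beta_{d-1}(\L_m) = \beta_{d-1}(\K^d),
\]
which telescopes to exactly the desired chain of inequalities through the marked complexes. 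The equality $\beta_{d-1}(\K^d) = \beta_{d-1}(\K)$ follows since adding faces of dimension $d+1$ or higher cannot change $\beta_{d-1}$ (again a consequence of Lemma~\ref{lem:delfinado}, as noted in the remark following it: adding a face of dimension $d' > d$ affects only $\beta_{d'-1}$ and $\beta_{d'}$), and the final inequality $\beta_{d-1}(\K) \ge 0$ is immediate since a Betti number is the rank of a vector space.

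I do not anticipate a serious obstacle here; the result is genuinely a routine corollary of Lemma~\ref{lem:delfinado}. The only point requiring a little care is the bookkeeping around the hypothesis $\partial\sigma \in C_{d-1}(\K)$ needed to invoke Lemma~\ref{lem:delfinado} — one must make sure that at each intermediate stage $\L_i$ still contains $\K^{d-1}$ (which it does by construction, since we only ever add $d$-faces), and one must address the edge case $\partial\sigma = 0$ as above. Everything else is a straightforward induction.
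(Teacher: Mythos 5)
Your proposal is correct and matches the paper's intended argument: the paper treats this statement as an immediate corollary of Lemma~\ref{lem:delfinado}, adding $d$-faces one at a time (so that $\beta_{d-1}$ can only stay the same or drop by one at each step) and noting that faces of dimension above $d$ leave $\beta_{d-1}$ unchanged. Your extra care about $\partial\sigma\in C_{d-1}$ and the degenerate case is harmless but not needed, since every intermediate complex contains $\K^{d-1}$.
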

\begin{proof}
By Lemma~\ref{lem:delfinado}, adding a $d$-simplex \gt{either increases} $\beta_d $ or  \gt{decreases} $\beta_{d-1}$. Since the inequalities only concern $\beta_{d-1}$,  $S_1 \subset S_2 \subset \F^d$ implies the first 3 inequalities. The equalities $\beta_{d - 1}(\K^d) = \beta_{d - 1}(\K^j) = \beta_{d - 1}(\K)$ follow from the property of simplicial complexes that for every simplex, all of its faces must be contained in the complex. Hence, \gt{adding} higher than $d$-dimensional simplices cannot change $\beta_{d-1}$.
\end{proof}

It remains to show that $\beta_{d-1}(\K) = 0$ implies the existence of a spanning acycle. We omit the case where \gt{$\F^{d}$} is empty as the $d-$spanning acycle is simply the empty set in this case. 
We begin by proving the following fact, which states that a positive simplex remains positive under simplex addition and a negative simplex remains negative under deletion. \dy{This is nothing but a restatement that the span of a basis is non-decreasing under the addition of elements. It however will be useful in the proof of correctness of Kruskal's algorithm.}
\begin{lemma}
\label{lem:+ve_faces}
Let $\K$ be a simplicial complex, $\sigma \in \K$ be a $d-$face, and let $S_1 \subseteq S_2 \subseteq \F^d$ be such that $\sigma \notin S_2$. If $\sigma$ is a positive face w.r.t. $\K^{d-1} \cup S_1$, then $\sigma$ is a positive face w.r.t $\K^{d-1} \cup S_2$. Conversely, if $\sigma$ is a negative face w.r.t. $\K^{d-1} \cup S_2$, then $\sigma$ is a negative face w.r.t. $\K^{d-1} \cup S_1$.
\end{lemma}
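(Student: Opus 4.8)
The plan is to reduce everything to the characterization of positive faces via boundary images given in \eqref{eqn:positive_image}, namely that for a $d$-face $\sigma \notin \K'$ with $\partial\sigma \in C_{d-1}(\K')$, the face $\sigma$ is positive for $\K'$ if and only if $\partial\sigma \in \partial(C_d(\K'))$. Applying this with $\K' = \K^{d-1} \cup S_1$ and $\K' = \K^{d-1} \cup S_2$, the two halves of the lemma become purely linear-algebraic statements about the boundary images $\partial(C_d(\K^{d-1}\cup S_1)) = \partial(B_d(S_1))$ and $\partial(C_d(\K^{d-1}\cup S_2)) = \partial(B_d(S_2))$, where $C_d(\K^{d-1}\cup S_i) = C_d(S_i)$ in the paper's notation.

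First I would record the monotonicity $C_d(S_1) \subseteq C_d(S_2)$, which is immediate from $S_1 \subset S_2$ since $C_d(S_i)$ is the span of the $d$-simplices in $S_i$ (together with none of lower dimension at level $d$). Applying the linear map $\partial_d$ preserves inclusions, so $\partial(C_d(S_1)) \subseteq \partial(C_d(S_2))$. For the first assertion: if $\sigma$ is positive for $\K^{d-1}\cup S_1$, then $\partial\sigma \in \partial(C_d(S_1)) \subseteq \partial(C_d(S_2))$; since also $\partial\sigma \in C_{d-1}(\K^{d-1}\cup S_1) \subseteq C_{d-1}(\K^{d-1}\cup S_2) = C_{d-1}(\K)$ and $\sigma \notin S_2$, Lemma~\ref{lem:delfinado} applies to $\K^{d-1}\cup S_2$ and $\sigma$, and \eqref{eqn:positive_image} forces $\sigma$ to be positive for $\K^{d-1}\cup S_2$. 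For the converse: if $\sigma$ is negative for $\K^{d-1}\cup S_2$, then $\partial\sigma \notin \partial(C_d(S_2))$ by \eqref{eqn:negative_image}; since $\partial(C_d(S_1)) \subseteq \partial(C_d(S_2))$, a fortiori $\partial\sigma \notin \partial(C_d(S_1))$, and again Lemma~\ref{lem:delfinado} (valid here since $\partial\sigma \in C_{d-1}(\K^{d-1}\cup S_1) \neq 0$, being a nonzero boundary in the ambient complex) together with \eqref{eqn:negative_image} shows $\sigma$ is negative for $\K^{d-1}\cup S_1$. Note the two assertions are contrapositives of each other once one observes that "positive" and "negative" are complementary labels for a face satisfying the hypotheses of Lemma~\ref{lem:delfinado}, so strictly only one direction needs a separate argument.

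The one point requiring care — and the only genuine obstacle — is the applicability of Lemma~\ref{lem:delfinado} and Definition~\ref{def:positive} at each stage: these require $\partial\sigma \in C_{d-1}(\cdot) \neq 0$ for the relevant complex. Since $\sigma \in \K$ is a $d$-face, all its $(d-1)$-subfaces lie in $\K^{d-1}$, hence in both $\K^{d-1}\cup S_1$ and $\K^{d-1}\cup S_2$; thus $\partial\sigma \in C_{d-1}(\K^{d-1}\cup S_1) \subseteq C_{d-1}(\K^{d-1}\cup S_2)$, and it is nonzero because $d \geq 1$ and $\partial\sigma$ is the alternating sum of the distinct $(d-1)$-faces of $\sigma$ (for $d = 0$ one instead uses the reduced convention $C_{-1} = \bF$ and $\partial_0[v] = 1 \neq 0$). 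Also $\sigma \notin S_2 \supseteq S_1$ guarantees $\sigma$ is genuinely outside each complex in question. With these verifications in place the argument is just the inclusion $\partial(C_d(S_1)) \subseteq \partial(C_d(S_2))$ fed through the dichotomy \eqref{eqn:negative_image}--\eqref{eqn:positive_image}.
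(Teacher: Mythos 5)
Your argument is correct and is essentially the paper's own proof: both reduce to the characterizations \eqref{eqn:negative_image}--\eqref{eqn:positive_image} and the inclusion $\partial(C_d(\K^{d-1}\cup S_1)) \subseteq \partial(C_d(\K^{d-1}\cup S_2))$, with the negative-face statement obtained as the contrapositive. Your extra checks on the applicability of Lemma~\ref{lem:delfinado} are fine but not a departure in method.
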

\begin{proof}
If $\sigma$ is a positive simplex w.rt. $\K\cup S_1,$ then \eqref{eqn:positive_image} implies $\partial\sigma \in \partial(C_d(\K\cup S_1))$. Since the $\partial(C_d(\K\cup S_1)) \subseteq \partial(C_d(\K\cup S_2),$ \gt{it follows from \eqref{eqn:positive_image}} that $\sigma$ is a positive simplex for $\K\cup S_1$ as well. The second statement is simply the contrapositive, since a simplex must be either positive or negative. 
\end{proof}
The second fact we need is a characterization of positive simplices -- if a set of $k$-simplices do not decrease $\beta_{k-1}$, then they are positive. 
\begin{lemma}
\label{lem:+ve_SA}
Let $S \subseteq \F^d$ be such that  $\beta_{d-1}(\K^{d-1} \cup S) = \beta_{d-1}(\K)$. Then any $\sigma \in \F^d \setminus S$ is a positive face w.r.t. $\K^{d-1} \cup S$. In particular, this holds when $S$ is a \dy{maximal} acycle.
\end{lemma}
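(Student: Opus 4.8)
The plan is to argue by contradiction, using the positive/negative dichotomy of Lemma~\ref{lem:delfinado} (equivalently, \eqref{Defn:NegFace} versus \eqref{Defn:PosFace}) together with the monotonicity of $\beta_{d-1}$ under adding $d$-faces recorded in Corollary~\ref{cor:dAndHighFacesUnaffectBettid-1}. In effect this is exactly the converse of Lemma~\ref{lem:ExistNegFace}. First I would fix an arbitrary $\sigma \in \F^d \setminus S$ and check that $\sigma$ is eligible to be labelled positive or negative with respect to $\K^{d-1} \cup S$ in the sense of Definition~\ref{def:positive}: since $\dim \sigma = d$ and $\sigma \notin S$ we have $\sigma \notin \K^{d-1} \cup S$, and $\partial \sigma$ is a nonzero $(d-1)$-chain supported on the $(d-1)$-faces of $\sigma$, all of which lie in $\K^{d-1} \subseteq \K^{d-1} \cup S$, so $\partial \sigma \in C_{d-1}(\K^{d-1} \cup S) \neq 0$. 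Hence exactly one of \eqref{Defn:NegFace}, \eqref{Defn:PosFace} holds for $\sigma$ with respect to $\K^{d-1} \cup S$.

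Next, suppose toward a contradiction that $\sigma$ is a negative face w.r.t. $\K^{d-1} \cup S$, i.e. $\beta_{d-1}(\K^{d-1} \cup S \cup \sigma) = \beta_{d-1}(\K^{d-1} \cup S) - 1$. By hypothesis $\beta_{d-1}(\K^{d-1} \cup S) = \beta_{d-1}(\K)$, so this would force $\beta_{d-1}(\K^{d-1} \cup S \cup \sigma) = \beta_{d-1}(\K) - 1$. But $S \cup \{\sigma\} \subset \F^d$, so Corollary~\ref{cor:dAndHighFacesUnaffectBettid-1} (applied with $S_1 = S \cup \{\sigma\}$ and $S_2 = \F^d$) gives $\beta_{d-1}(\K^{d-1} \cup S \cup \sigma) \geq \beta_{d-1}(\K^d) = \beta_{d-1}(\K)$, a contradiction. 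Therefore $\sigma$ must be positive w.r.t. $\K^{d-1} \cup S$, which is the claim.

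Finally, for the ``in particular'' clause, if $S$ is a spanning acycle then $\beta_{d-1}(\K^{d-1} \cup S) = 0$ by Definition~\ref{def:SA}, and Corollary~\ref{cor:dAndHighFacesUnaffectBettid-1} yields $0 = \beta_{d-1}(\K^{d-1} \cup S) \geq \beta_{d-1}(\K) \geq 0$, so $\beta_{d-1}(\K^{d-1} \cup S) = \beta_{d-1}(\K)$ and the first part applies verbatim. I do not expect any genuine obstacle here; the proof is essentially a two-line contradiction. The only point that needs a little care is the preliminary verification that $\sigma$ is eligible for the Definition~\ref{def:positive} labelling, i.e. that $\partial \sigma$ truly lies in $C_{d-1}(\K^{d-1} \cup S)$ and is nonzero, since this is precisely what makes the Lemma~\ref{lem:delfinado} dichotomy available.
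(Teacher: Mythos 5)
Your argument is correct and is exactly the route the paper intends: the lemma is stated there as an immediate consequence of Definition~\ref{def:SA}, Corollary~\ref{cor:dAndHighFacesUnaffectBettid-1} and Lemma~\ref{lem:delfinado}, which is precisely your contradiction via the monotonicity bound $\beta_{d-1}(\K^{d-1}\cup S\cup\sigma)\geq\beta_{d-1}(\K)$. Your extra check that $\partial\sigma\in C_{d-1}(\K^{d-1}\cup S)$, making the positive/negative dichotomy applicable, is a careful touch the paper leaves implicit.
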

	\begin{proof}
	From \gt{Corollary~\ref{cor:dAndHighFacesUnaffectBettid-1}}, $\beta_{d-1}(\K^{d-1} \cup S \cup \sigma) = \beta_{d-1}(\K)$ for any $\sigma \in \F^d \setminus S.$ Hence, from Lemma~\ref{lem:delfinado}, $\sigma$ is positive  w.r.t. $\K^{d-1} \cup S$. \gt{The case of maximal acycles follows from Definition~\ref{def:SA}}.
\end{proof}

Together, the above results imply that we can always find a simplex which will decrease the \gt{$(d-1)$-th} Betti number \gt{ whenever it is greater than that of the whole complex.}

\begin{lemma}
\label{lem:ExistNegFace}
Let $\K$ be a simplicial complex. For all $S \subseteq \F^d$, if $\beta_{d-1}(\K^{d-1} \cup S) = m > \beta_{d-1}(\K)$ then there exists a $\sigma \in \F^d \setminus S$ such that $\beta_{d-1}(\K^{d-1} \cup S \cup \sigma) = m - 1$.
\end{lemma}
\begin{proof}
Let $S \subseteq \F^d$ such that $\beta_{d-1}(\K^{d-1} \cup S) = m > \beta_{d-1}(\K)$ and suppose that all $\sigma \in \F^d \setminus S$ are positive faces w.r.t. $\K^{d-1} \cup S$. Let $S_1 \subseteq \F^d$ be such that $S_1 \supseteq S.$ Then, from Lemma \ref{lem:+ve_faces}, we have that any $\sigma \in \F^d \setminus S_1$ is positive w.r.t. $\K^{d - 1} \cup S_1.$ From this, we have $\beta_{d-1}(\K^{d-1} \cup S_1) = \beta_{d-1}(\K^{d-1} \cup S)$ for any $S_1 \supseteq S$. Taking $S_1 = \F^d$, we obtain the necessary contradiction that $\beta_{d - 1}(\K) = \beta_{d-1}(\K^{d-1} \cup S).$ Hence, there is a negative face $\sigma \in \F^d \setminus S$ w.r.t. $\K^{d-1} \cup S$.
\end{proof}

\red{As an acycle corresponds to a basis in matroid, every maximal acycle of $d$-faces has a constant cardinality (i.e., the rank of the space of boundaries). We shall prove this independently below.} \gt{Let $\gamma_d(\K) := f_d(\K^d) - \beta_d(\K^d)$.} \gt{Evaluating $\chi(\K^d) - \chi(\K^{d-1})$ by applying the   Euler-Poincar\'{e} formula \eqref{eqn:EP_formula} and then using the definition of $\gamma_d(\K)$ yields} 
\be \label{eq:useful_identity}
\gamma_d(\K)= \beta_{d-1}(\K^{d-1}) - \beta_{d-1}(\K^d) = \beta_{d-1}(\K^{d-1}) - \beta_{d-1}(\K),
\ee
where the latter equality follows from Corollary~\ref{cor:dAndHighFacesUnaffectBettid-1}.
Another use of Euler-Poincar\'{e} formula yields the following result.
\begin{lemma}(\cite[Proposition 2.13]{Duval16})
\label{lem:cardinality}
For a simplicial complex $\K$ and a subset $S \subseteq \F^d$ of $d-$faces, any two of the following three statements imply the third.
\begin{enumerate}
\item $ \beta_{d-1}(\K^{d-1} \cup S) = \beta_{d-1}(\K)$.
\item $\beta_{d}(\K^{d-1} \cup S) = 0$.
\item $|S| = \gamma_d(\K).$
\end{enumerate}
\end{lemma}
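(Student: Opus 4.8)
The plan is to exploit the incremental construction of the complex along the total order $<_l$ (equivalently, to add the $d$-faces of $S$ one at a time, then the remaining $d$-faces of $\F^d$), tracking how $\beta_{d-1}$ and $\beta_d$ evolve via Lemma~\ref{lem:delfinado}. The crucial bookkeeping device will be the count of positive and negative faces. If we add the faces of $S$ in any order to $\K^{d-1}$, each face is either positive (raising $\beta_d$ by one) or negative (lowering $\beta_{d-1}$ by one), by Lemma~\ref{lem:delfinado}. Hence, writing $n^-(S)$ and $n^+(S)$ for the number of negative and positive faces encountered, we get $|S| = n^-(S) + n^+(S)$, and moreover
\[
\beta_{d-1}(\K^{d-1}) - \beta_{d-1}(\K^{d-1}\cup S) = n^-(S), \qquad \beta_d(\K^{d-1}\cup S) = n^+(S),
\]
since $\beta_{d-1}(\K^{d-1})$ counts (via $\gamma_d$) and $\beta_d(\K^{d-1}) = 0$ as $\K^{d-1}$ has no $d$-faces. (One should check that the labels positive/negative are consistent regardless of the order in which faces of $S$ are added; this follows from Lemma~\ref{lem:+ve_faces}, or one can simply fix an order compatible with $<_l$.) Combining these two identities with $|S| = n^-(S) + n^+(S)$ yields the master equation
\[
|S| = \bigl(\beta_{d-1}(\K^{d-1}) - \beta_{d-1}(\K^{d-1}\cup S)\bigr) + \beta_d(\K^{d-1}\cup S).
\]

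From here each of the three implications is immediate. First I would recall from \eqref{eqn:AltGamma} that $\gamma_d(\K) = \beta_{d-1}(\K^{d-1}) - \beta_{d-1}(\K)$. Now suppose (1) and (2) hold: substituting $\beta_{d-1}(\K^{d-1}\cup S) = \beta_{d-1}(\K)$ and $\beta_d(\K^{d-1}\cup S) = 0$ into the master equation gives $|S| = \beta_{d-1}(\K^{d-1}) - \beta_{d-1}(\K) = \gamma_d(\K)$, which is (3). Next suppose (1) and (3): substituting $\beta_{d-1}(\K^{d-1}\cup S) = \beta_{d-1}(\K)$ gives $\gamma_d(\K) = |S| = \gamma_d(\K) + \beta_d(\K^{d-1}\cup S)$, forcing $\beta_d(\K^{d-1}\cup S) = 0$, which is (2). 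Finally suppose (2) and (3): substituting $\beta_d(\K^{d-1}\cup S) = 0$ and $|S| = \gamma_d(\K) = \beta_{d-1}(\K^{d-1}) - \beta_{d-1}(\K)$ into the master equation gives $\beta_{d-1}(\K^{d-1}) - \beta_{d-1}(\K) = \beta_{d-1}(\K^{d-1}) - \beta_{d-1}(\K^{d-1}\cup S)$, hence $\beta_{d-1}(\K^{d-1}\cup S) = \beta_{d-1}(\K)$, which is (1).

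An alternative, cleaner route that avoids the incremental argument is a direct Euler-characteristic computation: apply the Euler--Poincar\'e formula \eqref{eqn:EP_formula} to the complex $\K^{d-1}\cup S$, whose faces of dimension $\le d-1$ coincide with those of $\K^{d-1}$ and which has exactly $|S|$ faces of dimension $d$ (and none higher). Comparing $\chi(\K^{d-1}\cup S)$ with $\chi(\K^{d-1})$, the only new terms are $(-1)^d|S|$ on the face side and $(-1)^{d-1}\beta_{d-1}(\K^{d-1}\cup S) + (-1)^d\beta_d(\K^{d-1}\cup S) - (-1)^{d-1}\beta_{d-1}(\K^{d-1})$ on the Betti side (all other Betti numbers are unchanged by Corollary~\ref{cor:dAndHighFacesUnaffectBettid-1} and the fact that adding $d$-faces only affects $\beta_{d-1}$ and $\beta_d$). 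This reproduces the master equation directly, and then the three-way implication follows as above. I would probably present this Euler-characteristic version as the main argument since it parallels the proof of \cite[Lemma 3.4]{hiraoka2015minimum} and sidesteps the need to justify order-independence of the positive/negative labelling.

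The main obstacle is a bookkeeping one rather than a conceptual one: making sure that in passing from $\K^{d-1}$ to $\K^{d-1}\cup S$ only $\beta_{d-1}$ and $\beta_d$ change, and correctly handling the reduced-homology conventions (so that, e.g., $\beta_{-1} = 0$ is not accidentally invoked when $d=1$ and $\K^{d-1} = \K^0$; here $\gamma_1(\K) = \beta_0(\K^0) - \beta_0(\K) = (|V|-1) - \beta_0(\K)$ must be read with reduced Betti numbers). Corollary~\ref{cor:dAndHighFacesUnaffectBettid-1} already isolates the relevant monotonicity, so once the Euler-characteristic comparison is set up carefully the rest is routine algebra.
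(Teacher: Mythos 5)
Your proposal is correct and, in the Euler--Poincar\'e form you say you would present as the main argument, it is essentially the paper's own proof: the paper applies \eqref{eqn:EP_formula} to $(-1)^d[\chi(\K^{d-1}\cup S)-\chi(\K^d)]$ and obtains the identity \eqref{eqn:RelBetBettiGamma}, which via \eqref{eqn:AltGamma} is exactly your ``master equation,'' and the three pairwise implications follow by the same substitutions. Your incremental positive/negative-face counting is only a harmless variant that re-derives the same identity.
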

\begin{proof}
By applying the Euler-Poincar\'{e} formula to $(-1)^d[\chi(\K^{d-1} \cup S) - \chi(\K^d)] $ and re-arranging the terms, we derive the  identity:
\gt{\be
\label{eqn:RelBetBettiGamma}
\beta_d(\K^{d-1} \cup S) + \gamma_d(\K) - |S| - \beta_{d-1}(\K^{d-1} \cup S) + \beta_{d-1}(\K^d) = 0.
\ee}
\gt{ Separately, from Corollary~\ref{cor:dAndHighFacesUnaffectBettid-1}, we have $\beta_{d - 1}(\K^d) = \beta_{d - 1}(\K).$ From this, the desired result is easy to see.}
\end{proof}
\dy{From the above Lemma, we also have that the cardinality of every maximal acycle is $\gamma_d(\K)$. We can now prove the existence result} \gt{for spanning acycles}.  
\begin{lemma}
\label{lem:existence}
For a simplicial complex  $\K$, if $\beta_{d-1}(\K) = 0$, then  there exists a spanning acycle.
\end{lemma}
\begin{proof}
From ~\eqref{eq:useful_identity}, we obtain the identity
$$\beta_{d - 1}(\K^{d - 1}) = \gamma_{d}(\K) + \beta_{d-1}(\K).$$
  Since $\gamma_{d}(\K) \geq 0,$ it follows from Lemma~\ref{lem:ExistNegFace} that, starting with an empty set, we can inductively construct a set $S \subset \F^d$ such that $|S| = \gamma_d(\K)$ and $\beta_{d - 1}(\K^{d - 1} \cup S) = \beta_{d - 1}(\K).$  By Lemma~\ref{lem:cardinality}, it follows that $\beta_d(\K^{d- 1} \cup S) = 0$ implying that $S$ is an acycle with $|S| = \gamma_d(\K),$ as desired. If $\beta_{d - 1}(\K) = 0,$ then this $S$ is also a spanning acycle.
\end{proof}
\dy{We now provide a condition for uniqueness \gt{of} minimal spanning acycles and, towards \gt{deriving} the same, we first establish the exchange property \gt{of} spanning acycles.}
\begin{lemma}[Exchange property]
\label{lem:exchange}
Let $S \subset \F^d$ be a spanning acycle of a simplicial complex $\K$ and let $\sigma \in \F^d \setminus S.$ Then, for any $d$-face $\sigma_1 \in S$ such that $\sigma_1$ is part of a $d$-cycle containing $\sigma$, $S \cup \sigma \setminus
\sigma_1$ is also a spanning acycle.
\end{lemma}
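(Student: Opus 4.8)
The plan is to reduce the statement to the one-dimensionality of a suitable cycle space together with the cardinality characterisation of spanning acycles in Lemma~\ref{lem:cardinality}. Throughout, recall that for a complex with no faces of dimension $d+1$ (such as $\K^{d-1}\cup S$ for $S\subseteq\F^d$) the $d$-boundary space $B_d=\im\partial_{d+1}$ is trivial, so $\beta_d=z_d$ there.

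First I would introduce the fundamental cycle of $\sigma$. Since $S$ is a spanning acycle, $\beta_{d-1}(\K^{d-1}\cup S)=0$ and $\beta_d(\K^{d-1}\cup S)=0$; by Corollary~\ref{cor:dAndHighFacesUnaffectBettid-1} this forces $\beta_{d-1}(\K)=0$, and by the remark above $Z_d(\K^{d-1}\cup S)=0$. By Lemma~\ref{lem:+ve_SA} the face $\sigma\in\F^d\setminus S$ is positive with respect to $\K^{d-1}\cup S$, so $z_d(\K^{d-1}\cup S\cup\sigma)=z_d(\K^{d-1}\cup S)+1=1$; let $z$ span this one-dimensional cycle space. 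Because $Z_d(\K^{d-1}\cup S)=0$, the generator $z$ must have $\sigma\in\supp(z)$, and hence every nonzero $d$-cycle of $\K^{d-1}\cup S\cup\sigma$ containing $\sigma$ in its support is a nonzero scalar multiple of $z$ and has support exactly $\supp(z)$. Thus the hypothesis ``$\sigma_1\in S$ is part of a $d$-cycle containing $\sigma$'' means precisely that $\sigma_1\in\supp(z)$.

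Next I would check that $S':=(S\setminus\{\sigma_1\})\cup\{\sigma\}$ is an acycle. Any $d$-cycle supported in $S'$ is, since $S'\subseteq S\cup\{\sigma\}$, a $d$-cycle of $\K^{d-1}\cup S\cup\sigma$, hence equal to $cz$ for some scalar $c$. If it were nonzero then $c\neq 0$, so $\sigma_1\in\supp(cz)=\supp(z)$, contradicting $\sigma_1\notin S'$. Therefore $Z_d(\K^{d-1}\cup S')=0$, and since the $d$-boundary space vanishes, $\beta_d(\K^{d-1}\cup S')=0$. Finally the spanning property comes for free: $|S'|=|S|=\gamma_d(\K)$ by Lemma~\ref{lem:existence} (using \eqref{eqn:AltGamma} and $\beta_{d-1}(\K)=0$), so conditions (2) and (3) of Lemma~\ref{lem:cardinality} hold for $S'$, whence condition (1) holds, i.e. $\beta_{d-1}(\K^{d-1}\cup S')=\beta_{d-1}(\K)=0$. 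Thus $S'$ is a spanning acycle.

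The genuinely delicate point — and the one I would flag explicitly — is the reading of the hypothesis: ``a $d$-cycle containing $\sigma$'' must be understood relative to $\K^{d-1}\cup S\cup\sigma$ (equivalently, a cycle whose support lies in $S\cup\{\sigma\}$), \emph{not} relative to all of $\K$; with the latter reading the exchange property fails, exactly as in the classical graph counterexample where an edge $f$ of a spanning tree lies on a cycle of $G$ with $f\notin$ the fundamental cycle of the nontree edge $e$. Once the hypothesis is pinned down this way, the rest is the routine bookkeeping above, driven entirely by $B_d(\cdot)=0$ and the dimension count $z_d(\K^{d-1}\cup S\cup\sigma)=1$.
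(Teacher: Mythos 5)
Your proof is correct, and it turns on the same key fact as the paper's own argument: the cycle space of $\K^{d-1}\cup S\cup\sigma$ is one-dimensional, its generator has $\sigma$ in its support, and the exchange works precisely when $\sigma_1$ lies on that generator. The packaging differs, though. The paper argues by contraposition: if $S\cup\sigma\setminus\sigma_1$ were not a spanning acycle, Lemma~\ref{lem:delfinado} yields a cycle $\sC_1$ in it, and an explicit coefficient comparison (using that $\{\partial\tau:\tau\in S\}$ is linearly independent, i.e.\ the acyclicity of $S$) forces $\sC_1=\sC$, whence $\sigma_1\notin\sC$. You instead argue directly: the dimension count $z_d(\K^{d-1}\cup S\cup\sigma)=1$ gives the uniqueness of the cycle at once, acyclicity of $S'=(S\setminus\{\sigma_1\})\cup\{\sigma\}$ follows because any cycle supported in $S'$ would be a nonzero multiple of the generator yet avoid $\sigma_1$, and the spanning property then comes for free from the cardinality bookkeeping of Lemma~\ref{lem:cardinality} rather than from Lemma~\ref{lem:delfinado}. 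Your route is a bit cleaner and makes the ``fundamental cycle'' structure explicit; the paper's route stays at the level of explicit chain identities, which is closer in spirit to the way the exchange lemma is then invoked in the uniqueness and Kruskal arguments. Your remark on how the hypothesis must be read—the $d$-cycle is to be taken inside $S\cup\{\sigma\}$ (equivalently in $\K^{d-1}\cup S\cup\sigma$), not in all of $\K$—is a genuine and worthwhile point: the paper's proof indeed takes $\sC$ to be the cycle in $S\cup\sigma$, and with the ambient reading the statement already fails for graphs, exactly as in your counterexample.
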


\begin{proof}
By Lemma \ref{lem:+ve_SA}, $\sigma \in \F^d \setminus S$ is a positive face w.r.t. $\K^{d-1} \cup S$. So $\beta_d(\K \cup S \cup \sigma) = 1.$ Let $\sC$ be the $d$-cycle in $S \cup \sigma.$ Clearly, $\sigma \in \sC.$ So $\sum_{\tau \in \sC \cap S} a_{\tau}\partial \tau = - \partial \sigma$ for some collection of non-zero $\bF-$valued coefficients $\{a_{\tau}\}.$

Suppose that for $\sigma_1 \in S,$ $S \cup \sigma \setminus \sigma_1$ is not a spanning acycle. Then by Lemma \ref{lem:delfinado}, we obtain that $\beta_d(\K^{d-1} \cup S \cup {\sigma} \setminus \sigma_1) = \beta_{d-1}(\K^{d-1} \cup S \cup {\sigma} \setminus \sigma_1) = 1$. Let $\sC_1$ be the $d$-cycle in $\K^{d-1} \cup S \cup {\sigma} \setminus \sigma_1$.  Clearly, $\sC_1 \not\subset S$ as $S$ is a spanning acycle. Hence $\sigma
\in \sC_1$ and we derive that for some collection of non-zero $a'_{\tau} \in \mathbb{F}$, $\sum_{\tau \in (\sC_1 \cap
S)} a'_{\tau}\partial \tau  = -\partial \sigma$. Setting $a_{\tau} =0$ for $\tau \in \sC_1 \setminus \sC$ and similarly for
$a'_{\tau}$, we derive that
$$ \sum_{\tau \in (\sC
\cup \sC_1) \cap S} (a'_{\tau} - a_{\tau}) \partial \tau = \sum_{\tau \in (\sC_1 \cap S)} a_{\tau}\partial \tau - \sum_{\tau \in (\sC \cap S)} a_{\tau}\partial \tau  = \partial \sigma  - \partial \sigma = 0.$$
But since $S$ is a spanning acycle, the above implies that $\forall \tau \in (\sC \cup \sC_1) \cap S$, $a_{\tau} =
a'_{\tau}$ and hence $\sC_1 = \sC$. So, we have that $\sigma_1 \notin \sC$ if $S \cup \sigma \setminus \sigma_1$ is not a spanning acycle. By contraposition, we have that if $\sigma_1 \in \sC$, then $S \cup \sigma \setminus
\sigma_1$ is a spanning acycle.
\end{proof}
\begin{lemma}[Uniqueness]
\label{lem:uniqueness}
Let $\K$ be a simplicial complex weighted by $w: \K \to \Real$ which is injective on $\F^d$. If a minimal spanning acycle exists, then it must be unique.
\end{lemma}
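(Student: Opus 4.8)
The plan is to argue by contradiction, exploiting the exchange property (Lemma~\ref{lem:exchange2}) together with the injectivity of $w$ on $\F^d$. Suppose $M_1$ and $M_2$ are two distinct minimal spanning acycles of $\K$. Since they are distinct and, by Lemma~\ref{lem:cardinality} (or Lemma~\ref{lem:existence}), have the same cardinality $\gamma_d(\K)$, the symmetric difference is nonempty, so there is a $d$-face in $M_1 \setminus M_2$; among all such faces, pick the one of smallest weight and call it $\sigma_1$ (this is well-defined since $w$ is injective on $\F^d$, hence there are no ties).

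Next I would apply Lemma~\ref{lem:exchange2} to the pair $(M_1, M_2)$ and the face $\sigma_1 \in M_1 \setminus M_2$: this produces a $d$-face $\sigma_2 \in M_2$ such that $M_1 \cup \sigma_2 \setminus \sigma_1$ is again a spanning acycle. Note $\sigma_2 \neq \sigma_1$ and in fact $\sigma_2 \notin M_1$ (otherwise $M_1 \cup \sigma_2 \setminus \sigma_1$ would have cardinality $\gamma_d(\K) - 1$, which by Lemma~\ref{lem:cardinality} cannot be a spanning acycle), so $\sigma_2 \in M_2 \setminus M_1$. Since $M_1$ is a minimal spanning acycle and $M_1 \cup \sigma_2 \setminus \sigma_1$ is a spanning acycle, we must have $w(M_1 \cup \sigma_2 \setminus \sigma_1) \geq w(M_1)$, i.e.\ $w(\sigma_2) \geq w(\sigma_1)$; combined with injectivity this gives $w(\sigma_2) > w(\sigma_1)$.

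Now I would run the symmetric argument with the roles of $M_1$ and $M_2$ swapped. The subtlety — and the step I expect to need the most care — is setting up this symmetry so that it forces a strict inequality in the opposite direction. One clean way: among all $d$-faces in the symmetric difference $M_1 \triangle M_2$, let $\sigma_*$ be the one of overall minimum weight (well-defined by injectivity), and say without loss of generality $\sigma_* \in M_1 \setminus M_2$. Apply Lemma~\ref{lem:exchange2} to $(M_1, M_2)$ and $\sigma_*$ to get $\sigma' \in M_2 \setminus M_1$ with $M_1 \cup \sigma' \setminus \sigma_*$ a spanning acycle; minimality of $M_1$ gives $w(\sigma') \geq w(\sigma_*)$. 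But $\sigma' \in M_2 \setminus M_1 \subseteq M_1 \triangle M_2$, so by the choice of $\sigma_*$ as the minimum-weight element of the symmetric difference we have $w(\sigma') \geq w(\sigma_*)$ anyway, and if $\sigma' \neq \sigma_*$ then $w(\sigma') > w(\sigma_*)$ — which is consistent, not contradictory, so this particular phrasing needs adjustment.

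The correct device is to instead exchange \emph{into} $M_2$: apply Lemma~\ref{lem:exchange2} to the pair $(M_2, M_1)$ and the face $\sigma' \in M_2 \setminus M_1$. This yields a $d$-face $\sigma'' \in M_1 \setminus M_2$ with $M_2 \cup \sigma'' \setminus \sigma'$ a spanning acycle, whence minimality of $M_2$ forces $w(\sigma'') \geq w(\sigma')$. Chaining: if I always exchange the minimum-weight element of the symmetric difference, I can argue that each such exchange strictly decreases $\sum_{\sigma \in M_i} w(\sigma)$ unless the exchanged-in face equals $\sigma_*$ — but $\sigma_* \notin M_2$, so the face exchanged into $M_1$ cannot be $\sigma_*$, forcing a strict decrease and contradicting minimality of $M_1$. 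Concretely: take $\sigma_*$ to be the minimum-weight face of $M_1 \triangle M_2$, say in $M_1 \setminus M_2$; Lemma~\ref{lem:exchange2} on $(M_1,M_2)$ gives $\sigma' \in M_2 \setminus M_1$ with $M_1 \cup \sigma' \setminus \sigma_*$ a spanning acycle; since $\sigma' \in M_1 \triangle M_2$ and $\sigma' \neq \sigma_*$, injectivity and minimality of $\sigma_*$ give $w(\sigma') > w(\sigma_*)$, so $w(M_1 \cup \sigma' \setminus \sigma_*) < w(M_1)$, contradicting that $M_1$ is minimal. This closes the argument, and the only real obstacle is making sure $\sigma' \neq \sigma_*$, which holds because $\sigma' \in M_2$ while $\sigma_* \notin M_2$.
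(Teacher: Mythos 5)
There is a genuine gap, and it is in the very last step that is supposed to deliver the contradiction. You take $\sigma_*$ to be the minimum-weight face of $M_1\triangle M_2$ with $\sigma_*\in M_1\setminus M_2$, apply Lemma~\ref{lem:exchange2} to $(M_1,M_2)$ to get $\sigma'\in M_2\setminus M_1$ with $M_1\cup\sigma'\setminus\sigma_*$ a spanning acycle, and then conclude from $w(\sigma')>w(\sigma_*)$ that $w(M_1\cup\sigma'\setminus\sigma_*)<w(M_1)$. The inequality goes the wrong way: $w(M_1\cup\sigma'\setminus\sigma_*)=w(M_1)+w(\sigma')-w(\sigma_*)>w(M_1)$, so there is no contradiction with the minimality of $M_1$. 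This is not a fixable slip of orientation within your setup, because Lemma~\ref{lem:exchange2} gives you no control over the weight of the incoming face $\sigma_2$ (or $\sigma'$): as you yourself derived in your second paragraph, minimality of $M_1$ only yields $w(\sigma_2)\geq w(\sigma_1)$, which is consistent information rather than a contradiction. Removing the \emph{lightest} face of the symmetric difference from the acycle that contains it and inserting some face of the other acycle can only increase the weight; and the ``chaining'' remark is not actually carried out and does not follow from anything established.

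The exchange has to be oriented the other way, which is what the paper does using Lemma~\ref{lem:exchange} rather than Lemma~\ref{lem:exchange2}. With $\sigma_*$ the minimum-weight face of $M_1\triangle M_2$, say $\sigma_*\in M_1\setminus M_2$, one inserts $\sigma_*$ \emph{into} $M_2$: since $M_2$ is a spanning acycle and $\sigma_*\notin M_2$, the complex $\K^{d-1}\cup M_2\cup\sigma_*$ contains a $d$-cycle $\sC$ through $\sigma_*$ (Lemma~\ref{lem:+ve_SA}); this cycle cannot lie inside $M_1\cup\sigma_*\subseteq \K^{d-1}\cup M_1$ because $M_1$ is acyclic, so $\sC$ contains some $\tau\in M_2\setminus M_1$, and by the choice of $\sigma_*$ and injectivity, $w(\tau)>w(\sigma_*)$. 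Lemma~\ref{lem:exchange} is precisely what lets you remove \emph{that particular} face $\tau$ of the cycle: $M_2\cup\sigma_*\setminus\tau$ is a spanning acycle with $w(M_2\cup\sigma_*\setminus\tau)<w(M_2)$, contradicting minimality of $M_2$. The crucial difference between the two exchange lemmas is that Lemma~\ref{lem:exchange} lets you choose which face of the created cycle to delete (so you can delete one you know is heavier), whereas Lemma~\ref{lem:exchange2} only asserts the existence of some face to insert, with no weight guarantee — which is why your argument cannot close as written.
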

\begin{proof}
Suppose that $S$ and $M$ are two distinct minimal spanning acycles. Let $\sigma$ be the $d$-face with least weight such that $\sigma \in S \vartriangle M$ and without loss of generality, assume $\sigma \in S$. Then there is a $d-$cycle $\sC \subset M \cup \sigma$ such that $\sigma \in \sC.$ Since $\sC \not\subset S,$  there exists a $d$-face $\sigma_1 \in M \setminus S$ that is part of a $d$-cycle containing $\sigma$. By the choice of $\sigma$, $w(\sigma_1) > w(\sigma).$ From Lemma~\ref{lem:exchange}, $M \cup \sigma \setminus \sigma_1$ is a spanning cycle. But $w(M \cup \sigma \setminus \sigma_1) < w(M)$, a contradiction.
\end{proof}
\begin{remark}
\label{rem:uniqueness_msa}
Suppose the weight function $w$ is not injective on $\F^d$ but nevertheless monotonic on $\K$. Then as discussed in Remark \ref{rem:uniqueness}, this weight function shall yield a total order on $\K$ and so on $\F^d$ as well. In such a case, the above theorem guarantees that the minimal spanning acycle is unique with respect to the chosen total order. 
\end{remark}

\subsection{Kruskal's algorithm}
\label{sec:algos}
\gt{The classical Kruskal's \cite{Kruskal56} algorithm helps find minimal spanning trees. We now discuss its generalization that will be useful for finding minimal spanning acycles.}
\remove{We now turn to generalizing  Kruskal's algorithm  for minimal spanning trees.} \red{Generally, greedy algorithms exist to output a minimal basis for matroids \cite[Chapter 19]{Welsh76} and the following result can be considered folklore. However, we make use of this repeatedly throughout the remainder of the paper and so we provide a self-contained proof.} 
%

Let $\K$ be a simplicial complex weighted by $w: \K \to \Real.$ By Lemma \ref{lem:delfinado}, every $\sigma \in \F^d$ is either positive or negative, but not both, with respect to a subcomplex $\K_1$ such that $\K_1^{d - 1} = \K^{d - 1}$ and $\sigma \notin \K_1$. Using this, we give the simplicial Kruskal's algorithm below.

\begin{algorithm}[ht!]
\caption{Simplicial Kruskal's Algorithm}
\label{alg:Kruskal}
\begin{algorithmic}
\STATE {\bfseries Input:} $d \geq 1,$ $\K, w$
\STATE {\bfseries Main Procedure:}
\STATE set $S = \emptyset$, $F = \F^d$
\STATE {\bfseries while} $F \neq \emptyset$ and $\beta_{d-1}(\K^{d-1} \cup S) \neq 0$
\begin{itemize}
\item remove a face $\sigma$ with minimum weight from $F$ (w.r.t. $<_l$).
\item if $\sigma$ is negative w.r.t. $\K^{d - 1} \cup S,$ then add $\sigma$ to $S$.
\end{itemize}
\STATE {\bfseries Output} $M = S$.
\end{algorithmic}
\end{algorithm}
\begin{lemma}
\label{lem:SimplicialKruskalOutcome}
Let $\K$ be a weighted simplicial complex with $\beta_{d-1}(\K) = 0$  and let $M$ be the output of the simplicial Kruskal's algorithm. Then, $M$ is a minimal spanning acycle.
\end{lemma}
\remove{In the case when weights of $\F^d$ are distinct, the Kruskal's algorithm generates the unique minimal spanning acycle but in other cases, it will generate a minimal spanning acycle depending on the tie-break rule used (see Remark \ref{rem:uniqueness}).}
%
\begin{proof}
We shall assume that the weight function $w$ is injective. For the general case, similar arguments can be carried out by using Remarks  \ref{rem:uniqueness} and \ref{rem:uniqueness_msa}. From Lemmas \ref{lem:existence} and \ref{lem:uniqueness}, it follows that there is a unique minimal spanning acycle which we denote by $M_1.$

We now show that $M$ is a spanning acycle. Clearly, $\beta_{d}(\K^{d-1}) = 0$ and by our algorithm and Lemma \ref{lem:delfinado}, it remains the same at every stage of the algorithm and so $\beta_{d}(\K^{d-1} \cup M) = 0$, proving that $M$ is an acycle. Clearly, each face in $\F^d \setminus M$ is positive with respect to $\K^{d-1} \cup M$. Hence, $M$ is spanning as using Lemma \ref{lem:+ve_faces} we have that
$$\beta_{d-1}(\K^{d-1} \cup M) = \beta_{d-1}(\K^{d-1} \cup M \cup \F^d \setminus M) = \beta_{d-1}(\K) = 0.$$

For the proof of minimality, we argue as in the Kruskal's algorithm for minimal spanning tree. We prove that at any stage of the algorithm, $S \subseteq M_1.$ Assuming that the above claim is true, $M \subseteq M_1$. Since $M$ and $M_1$ are both spanning acycles, $M_1 =  M$ as desired.

It remains to prove that $S \subseteq M_1$ at any stage. We use induction for the same. Trivially, this is true for $S = \emptyset$. Suppose that the claim holds for $S$ at some stage of the algorithm, i.e., $S \subset M_1$ but $S \neq M.$ This implies that there does exist a  $d-$face in $\F^d \setminus S$ which is negative w.r.t. $\K^{d - 1} \cup S$ and hence, from Lemma~\ref{lem:+ve_SA}, $S \neq M_1.$ Let $\sigma$ be the next face that is added to $S$ and suppose that $\sigma \notin M_1.$ Clearly, $\beta_d(\K^{d - 1} \cup M_1 \cup \sigma) = 1.$ Hence, there exists a $d-$cycle in $\K^{d - 1} \cup M_1 \cup \sigma$ whose support\footnote{For a $d-$chain $\sum_i a_i \sigma_i$, its support is $\{\sigma_i \in \F^d : a_i \neq 0\}$} $\sC$  contains $\sigma.$ Since $\beta_d(\K^{d - 1} \cup S \cup \sigma) = 0,$ $\sC \not\subseteq S \cup \sigma$ and so there exists $\sigma_1 \in \sC \cap M_1 \setminus S.$ Clearly, either $w(\sigma) < w(\sigma_1)$ or $w(\sigma) > w(\sigma_1)$ as $w$ is injective. Suppose that $w(\sigma) < w(\sigma_1).$ By the exchange property of matroids, it follows that $M_1 \cup \sigma \setminus \sigma_1$ is spanning acycle with $w(M_1 \cup \sigma \setminus \sigma_1) < w(M_1),$ a contradiction. Suppose that $w(\sigma) > w(\sigma_1).$ Since $S \subsetneq M_1,$ $\sigma_1 \in M_1 \setminus S,$ and $M_1$ is a spanning acycle, it follows from Lemma \ref{lem:+ve_faces} that $\sigma_1$ is negative w.r.t. $\K^{d - 1} \cup S.$ Thus, it follows that the algorithm would have chosen $\sigma_1$ before $\sigma,$ a contradiction. The desired claim now follows. 
\end{proof}

As with minimal spanning trees, the Kruskal's algorithm has a number of useful consequences. We conclude this section with a definition of a (topological) notion of a {\em cut} for a simplicial complex and show that it has the desired properties which will prove \dy{useful in Section~\ref{sec:uniformly_Weighted_Complex}.}
\begin{definition}[Cut]
Let $d \geq 1.$ Given a simplicial complex $\K$ with $\beta_{d-1}(\K) = 0$, a subset $\sC \subseteq \F^d$ is a cut if $\beta_{d-1}(\K - \sC) > 0$ and for any $\sC_1 \subsetneq \sC$, $\beta_{d-1}(\K - \sC_1) < \beta_{d-1}(\K - \sC)$.
\end{definition}
As expected, this definition yields a corresponding \emph{cut property}. 
\begin{lemma}[Cut Property]
\label{lem:cut}
Let $\K$ be a weighted simplicial complex with $\beta_{d-1}(\K) =0.$ Let $\sC \subseteq \F^d$ be a cut. Then $\sC \cap S \neq \emptyset$ for any spanning acycle $S$ and every minimum weight face in $\sC$ belongs to some minimal spanning acycle.
\end{lemma}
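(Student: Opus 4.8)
The statement has two parts: (i) any spanning acycle $S$ must intersect the cut $\sC$, and (ii) every minimum-weight face in $\sC$ lies in some minimal spanning acycle. I would prove (i) directly from the definition of a cut together with Corollary~\ref{cor:dAndHighFacesUnaffectBettid-1}. Suppose $S \cap \sC = \emptyset$ for some spanning acycle $S$. Then $S \subseteq \F^d \setminus \sC$, so $\K^{d-1}\cup S \subseteq \K - \sC$ (at the level of $d$-faces), and hence by Corollary~\ref{cor:dAndHighFacesUnaffectBettid-1} we get $\beta_{d-1}(\K - \sC) \leq \beta_{d-1}(\K^{d-1}\cup S) = 0$ since $S$ is spanning, contradicting $\beta_{d-1}(\K - \sC) > 0$ from the definition of a cut. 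So (i) follows almost immediately.

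\textbf{Part (ii): the minimum-weight face in the cut.} Fix a minimum-weight face $\sigma \in \sC$ (after the tie-break rule $<_l$ if needed, invoking Remarks~\ref{rem:uniqueness} and \ref{rem:uniqueness_msa} to reduce to the injective case). The natural approach is to run the simplicial Kruskal's algorithm (Algorithm~\ref{alg:Kruskal}) with a tie-break rule that makes $\sigma$ the first face of $\sC$ to be examined, and show $\sigma$ must be added to $S$ — by Theorem~\ref{lem:SimplicialKruskalOutcome} the output is then a minimal spanning acycle containing $\sigma$. When Kruskal's reaches $\sigma$, the current partial acycle $S'$ contains no face of $\sC$ (all faces of $\sC$ examined before $\sigma$ — none, by the tie-break choice, or more carefully: $S' \subseteq \F^d \setminus \sC$ because $\sigma$ is the cheapest face of $\sC$). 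I claim $\sigma$ is negative with respect to $\K^{d-1}\cup S'$. If not, $\sigma$ is positive, and by Lemma~\ref{lem:+ve_faces} every face of $\sC$ is positive with respect to $\K^{d-1}\cup S'$ too (since $\sC \cap S' = \emptyset$, each such face sits over a superset... wait, this needs care): more precisely, since $S' \subseteq \F^d \setminus \sC$, one can consider the larger acycle-building process restricted to $\F^d \setminus \sC$. The cleanest route: show that if no face of $\sC$ were negative at the time it is examined, then the final output $M$ would satisfy $M \subseteq \F^d \setminus \sC$, whence $\beta_{d-1}(\K - \sC) \le \beta_{d-1}(\K^{d-1}\cup M) = 0$, contradicting the cut hypothesis. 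Then among the faces of $\sC$ the first one examined that is negative is $\sigma$ (by minimality of its weight), so $\sigma$ is added.

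\textbf{Alternative via exchange.} A second, perhaps slicker route for (ii): take any minimal spanning acycle $M$; if $\sigma \notin M$, then by Lemma~\ref{lem:+ve_SA} $\sigma$ is positive w.r.t.\ $\K^{d-1}\cup M$, so there is a $d$-cycle $\sC'$ in $M \cup \sigma$ through $\sigma$. One then wants a face $\sigma_1 \in \sC' \cap M$ that also lies in the cut $\sC$, with $w(\sigma_1) \ge w(\sigma)$ — then Lemma~\ref{lem:exchange} gives that $M \cup \sigma \setminus \sigma_1$ is a spanning acycle, and minimality forces $w(\sigma_1) = w(\sigma)$, so $M\cup\sigma\setminus\sigma_1$ is also minimal and contains $\sigma$. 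The existence of such $\sigma_1 \in \sC' \cap \sC$ is where the cut structure is genuinely needed: because $M \setminus \sigma_1$ together with $\sigma$ breaks no homology that... the key point is that $\sC'$ — a cycle — together with the minimality of $\sC$ as a cut should force $\sC' \cap \sC \ne \emptyset$ beyond just $\sigma$. This is the main obstacle: pinning down precisely why a cycle through $\sigma$ must revisit the cut. I expect the Kruskal-based argument to be the more robust one to write up, since it only uses monotonicity of $\beta_{d-1}$ and the already-proven correctness of the algorithm, and I would present that as the primary proof, using the tie-break correspondence of Remark~\ref{rem:uniqueness} to handle non-injective weights.
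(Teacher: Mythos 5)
Part (i) of your proposal is fine and matches the paper's intent (monotonicity of $\beta_{d-1}$ under addition of $d$-faces plus the definition of a cut). The problem is in part (ii), in your primary Kruskal route. Your contradiction argument (``if no face of $\sC$ were negative at the time it is examined, then $M \subseteq \F^d\setminus\sC$, contradicting $\beta_{d-1}(\K-\sC)>0$'') only proves that \emph{some} face of $\sC$ is negative when examined; the final sentence ``the first one examined that is negative is $\sigma$ (by minimality of its weight)'' is a non sequitur. Minimality of $w(\sigma)$ makes $\sigma$ the first face of $\sC$ to be \emph{examined}, not the first to be \emph{negative}: $\sigma$ could be positive at its examination time while a heavier face of $\sC$ is the one Kruskal later adds. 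Note that your argument uses only $\beta_{d-1}(\K-\sC)>0$ and never the minimality clause in the definition of a cut ($\beta_{d-1}(\K-\sC_1)<\beta_{d-1}(\K-\sC)$ for every $\sC_1\subsetneq\sC$). With only the first condition the conclusion is false: for $d=1$ take a triangle $abc$ with a pendant vertex $x$ joined to $a$ by an edge $e$, weights $w(bc)=1$, $w(ca)=2$, $w(ab)=10$, $w(e)=20$, and $\sC=\{ab,e\}$; then $\beta_0(\K-\sC)>0$ but the cheapest face of $\sC$, namely $ab$, lies in no minimal spanning tree (of course $\sC$ is not a cut, precisely because it violates the minimality clause). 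So any correct proof must invoke that clause, and yours does not.

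The paper's proof does exactly this, and the fix is short: apply the cut condition with $\sC_1=\sC\setminus\sigma$ to get $\beta_{d-1}(\K-\sC_1)<\beta_{d-1}(\K-\sC)$, which says precisely that $\sigma$ is a \emph{negative} face with respect to $\K-\sC$. Since no face of $\sC$ has been added when Kruskal examines $\sigma$ (choose $<_l$ so that $\sigma$ is the unique minimum of $\sC$), the current complex $\K^{d-1}\cup S'$ sits inside $\K-\sC$, and the converse direction of Lemma~\ref{lem:+ve_faces} (negativity passes from a larger to a smaller complex) shows $\sigma$ is negative with respect to $\K^{d-1}\cup S'$; hence Kruskal adds it, and Theorem~\ref{lem:SimplicialKruskalOutcome} gives a minimal spanning acycle containing $\sigma$. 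Your exchange-based alternative is also incomplete, as you acknowledge: the missing step (a cycle through $\sigma$ must meet the cut in a second face, necessarily of weight at least $w(\sigma)$) is the circuit--cocircuit intersection property and again requires the minimality of $\sC$.
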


\begin{proof}
\gt{Let $S$ be a spanning acycle and suppose that $\sC \cap S = \emptyset.$ On one hand, because $S$ is spanning, $\beta_{d - 1}(\K^{d - 1} \cup S) = 0.$ On the other hand, since $\sC$ is a cut, we have $\beta_{d - 1}(\K - \sC) > 0.$ The latter, when combined with the second inequality in Corollary~\ref{cor:dAndHighFacesUnaffectBettid-1} and the fact that $S \subseteq \F^{d}(\K - \sC),$ implies $\beta_{d - 1}(\K^{d - 1} \cup S) > 0.$ This leads to a contradiction and, thus, the first conclusion holds.} 

\remove{The first conclusion follows from the definition of a cut and Lemma \ref{lem:ExistNegFace}.}

Now for the second part. Let $\sigma_1$ be a minimum weight face in the cut $\sC$ and let $<_l$ be a total order in which this is the unique minimum weight face in the cut $\sC$. Consider the simplicial Kruskal's algorithm under this $<_l$ and let $S_1$ be the acycle constructed when $\sigma_1$ is the minimum weight face in $F$. Clearly $\K^{d-1} \cup S_1 \subseteq \K - \sC$. Setting $\sC_1 = \sC \setminus \sigma_1$, the cut property implies that
$$ \beta_{d-1}(\K - \sC_1) < \beta_{d-1}(\K - \sC).$$
Thus $\sigma_1$ is negative w.r.t. $\K - \sC$ and, by Lemma \ref{lem:+ve_faces},
is also negative w.r.t. $\K^{d-1} \cup S_1.$ Hence $\sigma_1$ will be added to the minimal spanning acycle by the simplicial Kruskal's algorithm .
\end{proof}

We note that this agrees with the graph notion of a cut. This will prove useful when considering extremal faces. We conclude with the following consequence. 
Let $\K$ be a simplicial complex and let $\tau \in \F^{d - 1}.$ Then $\sigma \in \F^d$ is said to be a coface of $\tau,$ if $\tau \subset \sigma.$ Since the set of all cofaces of a $(d - 1)$-face forms a cut, the below result is immediate.
\begin{corollary}
\label{cor:nng_MSA}
Let $\K$ be a weighted simplicial complex with $\beta_{d-1}(\K) = 0$. \remove{Then for any spanning acycle $S$, $supp(\partial S)$ is the set of all $(d-1)$-faces with a $d-$coface. Further, let} \gt{Let} $\tau \in \F^{d-1}(\K)$ and $\sigma_0 := \arg\min \{ w(\sigma) : \sigma \in \F^d(\K), \tau \subset \sigma. \}$. Then, $\sigma_0 \in M$ for some minimal spanning acycle $M$.
\end{corollary}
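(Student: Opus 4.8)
The plan is to deduce both assertions from Lemma~\ref{lem:cut} (the Cut Property) by checking that, for a fixed $\tau \in \F^{d-1}(\K)$ possessing at least one $d$-coface, the set of all its cofaces
\[
\sC_\tau := \{\sigma \in \F^d(\K) : \tau \subset \sigma\}
\]
is a cut in the sense of the definition preceding Lemma~\ref{lem:cut}. (If $\tau$ has no $d$-coface there is nothing to prove: such a $\tau$ lies in no $\supp(\partial\sigma)$ with $\sigma \in \F^d(\K)$, so $\tau \notin \supp(\partial S)$, while $\sigma_0$ is undefined; hence assume $\sC_\tau \neq \emptyset$.)

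First I would record two elementary facts. (a) The $(d-1)$-cycle space $Z_{d-1} = \ker \partial_{d-1}$ is determined by the faces of dimension at most $d-1$, so $z_{d-1}(\K - \sC) = z_{d-1}(\K)$ for every $\sC \subseteq \F^d$; combined with $\beta_{d-1} = z_{d-1} - b_{d-1}$, this makes a comparison of $\beta_{d-1}$ between two such subcomplexes equivalent to the reverse comparison of the boundary-space ranks $b_{d-1}(\cdot) = \dim B_{d-1}(\cdot)$. (b) For a $(d-1)$-face $\tau$, the coefficient of $\tau$ in $\partial\sigma$ is nonzero exactly when $\sigma \in \sC_\tau$; hence no element of $B_{d-1}(\K - \sC_\tau)$, which is spanned by $\{\partial\sigma : \sigma \in \F^d(\K)\setminus\sC_\tau\}$, has $\tau$ in its support. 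Using (a) and (b): for any $\sigma^* \in \sC_\tau$ we have $\partial\sigma^* \in B_{d-1}(\K)$ with $\tau \in \supp(\partial\sigma^*)$, so $b_{d-1}(\K - \sC_\tau) < b_{d-1}(\K) = z_{d-1}(\K)$ (the last equality since $\beta_{d-1}(\K)=0$), whence $\beta_{d-1}(\K - \sC_\tau) > 0$. For the minimality clause, given $\sC_1 \subsetneq \sC_\tau$ pick $\sigma^* \in \sC_\tau \setminus \sC_1$; then $\partial\sigma^* \in B_{d-1}(\K - \sC_1)$ while $\partial\sigma^* \notin B_{d-1}(\K - \sC_\tau)$, and since $B_{d-1}(\K - \sC_\tau) \subseteq B_{d-1}(\K - \sC_1)$ this gives $b_{d-1}(\K - \sC_1) > b_{d-1}(\K - \sC_\tau)$, i.e. $\beta_{d-1}(\K - \sC_1) < \beta_{d-1}(\K - \sC_\tau)$. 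Thus $\sC_\tau$ is a cut.

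The corollary then follows at once. For the first statement, the inclusion $\supp(\partial S) \subseteq \{\tau \in \F^{d-1}(\K) : \tau \text{ has a } d\text{-coface in } \K\}$ holds trivially because $S \subseteq \F^d(\K)$; conversely, if $\tau$ has a $d$-coface then, $\sC_\tau$ being a cut, Lemma~\ref{lem:cut} gives $S \cap \sC_\tau \neq \emptyset$, so some $\sigma \in S$ contains $\tau$ and $\tau \in \supp(\partial\sigma) \subseteq \supp(\partial S)$. For the second statement, $\sigma_0$ is by construction a minimum-weight face of the cut $\sC_\tau$, so the second conclusion of Lemma~\ref{lem:cut} places $\sigma_0$ in some minimal spanning acycle. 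The only step requiring genuine care is the verification of the minimality clause of the cut definition for $\sC_\tau$; everything else is bookkeeping. (If one wishes to bypass the cut language for the first statement, an equivalent route is a rank count: $S$ acyclic makes $\partial_d$ injective on $C_d(S)$, $S$ spanning makes $|S| = \gamma_d(\K) = b_{d-1}(\K)$ via \eqref{eqn:card_gen_complex}, hence $\partial_d(C_d(S)) = B_{d-1}(\K)$; any $\partial\sigma'$ with $\tau$ in its support then lies in $\partial_d(C_d(S))$, forcing $\tau \in \supp(\partial\sigma)$ for some $\sigma \in S$.)
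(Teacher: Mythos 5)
Your proposal is correct and follows essentially the same route as the paper: the paper's proof is precisely the observation that the set of cofaces of a $(d-1)$-face forms a cut, after which both conclusions are immediate from Lemma~\ref{lem:cut}. The only difference is that you supply the verification (via the rank comparison of $B_{d-1}$ with $Z_{d-1}$ fixed) that $\sC_\tau$ satisfies the cut definition, a step the paper asserts without proof; your verification is sound.
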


\subsection{Persistence diagrams and minimal spanning acycles}
\label{sec:MSA_persistence}
In this section, we prove the connection between persistence diagrams and minimal spanning acycles (Theorem \ref{thm:death_MSA}) and some consequences. \dy{Though this correspondance is striking in its simplicity and completely consistent with the minimal spanning tree case, we will see that this has some non-trivial consequences in the study of weighted complexes.} 

The minimal spanning acycle represents the persistence boundary basis w.r.t. the sublevel set filtration induced by weights on the simplices. This is explicit from the incremental  algorithm (Algorithm \ref{alg:incremental}). From the decomposition of a filtration into a persistence diagram, it follows that a positive simplex generates a new homology class and hence forms a new cycle, while a negative simplex bounds an existing non-trivial homology class and hence is a boundary. \dy{Our proof will make this idea precise.}

\remove{In this setting, \red{a basis is built incrementally and a simplex is added to the acycle (i.e., basis) if its boundary is linearly independent \gt{to} the space of boundaries of the simplicial complex up to that time, which is equivalent to it being a negative simplex. }
Since a negative simplex does not generate a cycle, it is part of the minimal spanning acycle. By maintaining the basis in this manner, we can simulate Kruskal's algorithm. No simplices added in this algorithm generate cycles since their boundary chains reduce to 0, while conversely, \red{linear independence} implies a reduction of the $(k-1)$-dimensional Betti number (a consequence of Lemma \ref{lem:delfinado}). This correspondence highlights that all elements of the cycle basis the persistence algorithm returns are of a special form -- a linear combination of faces from the minimal spanning acycle and a simplex.}	
%
\remove{
\begin{theorem}
\label{thm:death_MSA}
Let $\K$ be a weighted $d-$complex with $\beta_{d-1}(\K) = 0$. Let $\mathcal{D}$ be the point-set of death times in the persistence diagram of the $\mathbb{H}_{d-1}(\K)$ with the canonical filtration induced by the weights and let $\mathcal{B}$ be the point-set of birth times in the persistence diagram
of the $\mathbb{H}_{d}(\K)$ with the canonical filtration induced by the weights. Then we have that
\[ \mathcal{D} = \{w(\sigma) : \sigma \in M \}, \, \, \mathcal{B} = \{w(\sigma) : \sigma \in \F^{d} \backslash M \},\]
where $M$ is a $d-$minimal spanning acycle of $\K$ and $\F^{d}$ are the $d-$simplices of $\K$.
\end{theorem}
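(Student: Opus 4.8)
The plan is to run the simplicial Kruskal's algorithm (Algorithm~\ref{alg:Kruskal}) and the incremental persistence algorithm (Algorithm~\ref{alg:incremental}) side by side on the same total order $<_l$, and to show that they label every $d$-face positive/negative in exactly the same way. By the correspondence between monotonic weights and total orders recalled in Remark~\ref{rem:uniqueness}, it suffices to treat the case where $w$ is injective on $\F^d$ (in general, fix $<_l$, argue with the discrete filtration $w_\discrete$, and push the conclusion back through $\pi$; uniqueness of the minimal spanning acycle with respect to $<_l$ is then Remark~\ref{rem:uniqueness_msa}). So list the $d$-faces as $\sigma_1 <_l \cdots <_l \sigma_N$. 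Since $\K$ is a $d$-complex and the canonical filtration adds every proper face of $\sigma_i$ no later than $\sigma_i$ (monotonicity plus $<_l$ refining the face poset), we have $\F^d(\K(\sigma_i^-)) = \{\sigma_1,\dots,\sigma_{i-1}\}$ and $\partial\sigma_i \in C_{d-1}(\K(\sigma_i^-))$, whence
\[
\partial\big(C_d(\K(\sigma_i^-))\big) = \mathrm{span}_{\bF}\{\partial\sigma_1,\dots,\partial\sigma_{i-1}\}.
\]
Interleaving of lower-dimensional faces in the full filtration does not affect this span, so it is legitimate to restrict the bookkeeping to the $d$-faces.

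Let $S_i \subseteq \{\sigma_1,\dots,\sigma_{i-1}\}$ be Kruskal's partial acycle just before $\sigma_i$ is examined, so $\partial\big(C_d(\K^{d-1}\cup S_i)\big) = \mathrm{span}_{\bF}\{\partial\sigma : \sigma \in S_i\}$. The heart of the argument is the invariant, proved by induction on $i$:
\[
\mathrm{span}_{\bF}\{\partial\sigma : \sigma \in S_i\} \;=\; \mathrm{span}_{\bF}\{\partial\sigma_1,\dots,\partial\sigma_{i-1}\}.
\]
For $i=1$ both sides are $\{0\}$. Assuming the invariant at step $i$, criterion \eqref{eqn:negative_image} shows that $\sigma_i$ is negative with respect to $\K^{d-1}\cup S_i$ iff $\partial\sigma_i$ lies outside this common span iff $\sigma_i$ is negative with respect to $\K(\sigma_i^-)$; thus both algorithms make the same decision on $\sigma_i$. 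If $\sigma_i$ is negative, Kruskal's sets $S_{i+1} = S_i \cup \{\sigma_i\}$ so its span gains exactly $\bF\,\partial\sigma_i$, while the persistence-side span passes from $\mathrm{span}_{\bF}\{\partial\sigma_1,\dots,\partial\sigma_{i-1}\}$ to $\mathrm{span}_{\bF}\{\partial\sigma_1,\dots,\partial\sigma_i\}$, again gaining exactly $\bF\,\partial\sigma_i$. If $\sigma_i$ is positive, then by \eqref{eqn:positive_image} $\partial\sigma_i$ already lies in the common span, Kruskal's keeps $S_{i+1} = S_i$, and neither span changes. In either case the invariant passes to step $i+1$.

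Therefore the set of $d$-faces accepted by Kruskal's algorithm equals the set of $d$-faces that are negative with respect to their own $\K(\sigma^-)$. By Theorem~\ref{lem:SimplicialKruskalOutcome} (together with Lemma~\ref{lem:uniqueness}) the former set is the minimal spanning acycle $M$, and by \eqref{eqn:death_birth_neg_pos} --- equivalently, by the output rule of Algorithm~\ref{alg:incremental} --- the latter set is $\{\sigma \in \F^d : w(\sigma) \in \mathcal{D}\}$, with complement $\F^d\setminus M = \{\sigma \in \F^d : w(\sigma) \in \mathcal{B}\}$. Since $w$ is injective on $\F^d$, this yields $\mathcal{D} = \{w(\sigma) : \sigma \in M\}$ and $\mathcal{B} = \{w(\sigma) : \sigma \in \F^d \setminus M\}$, as claimed.

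The step I expect to be the main obstacle is choosing the right invariant. The naive temptation is to compare $\K^{d-1}\cup S_i$ with $\K(\sigma_i^-)$ directly via Lemma~\ref{lem:+ve_faces}, but this fails: $\K(\sigma_i^-)$ contains the previously seen \emph{positive} $d$-faces (which $S_i$ omits) and may contain a different part of the $(d-1)$-skeleton, so the two complexes are not nested in either direction. The fix is to track only the $d$-boundary space $\partial(C_d(\cdot))$, which is blind both to the extra positive faces (by \eqref{eqn:positive_image}) and to which $(d-1)$-faces are present; once one works with this invariant the induction is routine.
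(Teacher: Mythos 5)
Your proposal is correct and follows essentially the same route as the paper: both proofs reduce to injective weights, run Kruskal's and the incremental persistence algorithm on the same total order, and hinge on the invariant that the boundary span $\partial(C_d(\cdot))$ of the partial minimal spanning acycle equals that of the full filtration up to each stage, so that the negativity tests of \eqref{eqn:negative_image} agree. The only difference is cosmetic — you establish the invariant by forward induction, whereas the paper argues by contradiction from a first face violating it.
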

}	
\begin{proof}[Proof of Theorem \ref{thm:death_MSA}]
\gt{We only prove the result for death times  $\mathcal{D}$ since the result for birth times $\mathcal{B}$ is then immediate. This is because, on one hand, every $d-$simplex is either positive or negative with respect to $\K(\sigma^-)$ (see \eqref{Defn:NegFace} and \eqref{Defn:PosFace}). On the other hand, by the incremental algorithm (Algorithm \ref{alg:incremental}), negative simplices correspond to death times and positive simplices correspond to birth times \eqref{eqn:death_birth_neg_pos}. \remove{, the result for $\mathcal{D}$ implies that for $\mathcal{B}$.}}

We again only consider the case when the filtration values are unique and appeal to Remark \ref{rem:uniqueness} to complete the proof in the general case. Note that, in the general case, we use the same total ordering for the incremental algorithm (Algorithm \ref{alg:incremental}) generating death and birth times as well as the simplicial Kruskal's algorithm (Algorithm \ref{alg:Kruskal}).

By uniqueness of weights on $\F^d$, the Kruskal's algorithm gives us the minimal spanning acycle $M$. Firstly, by the relation \eqref{eqn:negative_image}, the condition to add $\sigma$ to $S$ in Kruskal's algorithm is equivalent to $\partial(C_d(S)) \subsetneq \partial(C_d(S \cup \sigma))$. \gt{Similarly,  the  incremental algorithm adds $c = w(\sigma)$ to $\mathcal{D}$ if $ \partial(C_d(\K(c-))) \subsetneq \partial(C_d(\K(c-) \cup \sigma)),$ where, for $c \in \Real,$ $\K(c) := \{\sigma \in K: w(\sigma) \leq c\}$ and  $\K(c-) := \{\sigma \in K: w(\sigma) < c\}.$} 

Let $c$ be a \gt{non-trivial} value in the filtration, i.e., there exists $\sigma \in \K$ such that $w(\sigma) = c$. Let $M(c)$ denote the acycle generated by Kruskal's algorithm on $\K(c),$ i.e., $M(c) := M \cap \K(c);$ similarly, define the notation $M(c-).$ By the above discussion on Kruskal's algorithm and incremental algorithm, our proof is complete if we show that $\partial(C_d(M(c))) = \partial(C_d(\K(c)))$. Trivially, $\partial(C_d(M(c))) \subseteq \partial(C_d(\K(c)))$ and we shall now show the other inclusion.

Suppose the other inclusion does not hold, then there exists a $\tau \in \F^d(\K(c)) \setminus M(c)$ such that $\partial \tau \notin \partial (C_d(M(c)))$. \gt{Let $w(\tau) = b \leq c$}. Then, clearly $\partial \tau \notin \partial (C_d(M(b-)));$ hence, by \eqref{eqn:negative_image}, $\tau$ will be a negative face with respect to $\K^{d-1} \cup M(b-)$. Therefore, Kruskal's algorithm would have added $\sigma$ to the acycle $M(b-)$ contradicting the assumption that $\tau \notin M(c)$. Thus, we have $\partial(C_d(M(c))) = \partial(C_d(\K(c)))$ and the proof is complete.
\end{proof}

\remove{	First, we prove that $span(M_c) = span(X_c)$. First note that  $M_c \subseteq X_c$, so to show equality, we
assume there exists a $d-$simplex $\sigma \in X_c\backslash M_c$, such that its boundary is linearly independent
of $span(\partial(M_c))$. Let $w(\sigma) = d <c$. Had we computed the $M$ using Kruskal's algorithm, since
$M_d \subseteq M_c$, linear independence of  $span(\partial(M_c))$ implies linear independence of
$span(\partial(M_d))$ meaning $\sigma$ would be added to the minimal spanning acycle which is a contradiction.

For any $c$, we can consider the next simplex in the filtration, denoted $\tau$. Since the spans are the same, it
follows that if $\tau$ is a negative simplex whether we test for linear independence using the entire boundary matrix
or restricted to the minimal spanning acycle. Hence, if $\tau$ is negative it corresponds to a weight in the minimal spanning acycle.
Conversely, if $\tau$ is positive, it implies that the $\tau$ is not added to the minimal spanning acycle, but further implies it is a cycle,
and hence does not create a death.}


\remove{\red{BIRTH TIME RESULT}
\begin{theorem}
\label{thm:birth_MSA}
Let $\K$ be a weighted $d-$complex. Let $\mathcal{B}$ be the point-set of birth times in the persistence diagram
of the $\mathbb{H}_{d}(\K)$ with the canonical filtration induced by the weights. Then we have that
\[ \mathcal{B} = \{w(\sigma) : \sigma \in \F^{d} \backslash M \}, \]
where $M$ is the $d-$minimal spanning acycle of $\K$ and $\K^{d}$ be the $d-$simplices of $\K$.
\end{theorem}
\begin{proof}
The proof follows as for deaths. Using the above result that $span(M_c) = span(X_c)$, we can consider the next
simplex in the filtration, for any $c$ denoted $\tau$. Hence if $\tau$ is positive, it is not in the minimal spanning acycle and it
corresponds to a birth. In the other direction, if it is not in the minimal spanning acycle, it must correspond to a positive simplex and
hence it corresponds to a birth time.
\end{proof}
\begin{corollary}
\label{cor:lifetime_sum}
Let $\K$ be a weighted $d-$complex with $\beta_{d-2}(\K^{d-1}) = \beta_{d-1}(\K^d) = 0$. Let $M_{d-1},M_d$ be
respectively the $d-$minimal spanning acycle and $(d-1)$-minimal spanning acycle. Then, if the $\{(d_i,b_i) : i \in I\}$ is the $\mathbb{H}_{d-1}(\K)$
persistence diagram in the canonical filtration induced by the weights, we have that
\[  \int_{t = 0}^{\infty} \beta_{d-1}(t) \mathrm{d}t = \sum_{i \in I} (d_i - b_i) = w(M_d) + w(M_{d-1}) - w(\F^{d-1}) \]
\end{corollary}
}
\dy{The above result has powerful applications for random complexes as will be seen in the next section but we will now mention few applications in the deterministic setting as well.} As already mentioned in the introduction, we obtain \cite[Theorem 1.1]{hiraoka2015minimum} (see \eqref{eqn:lifetime_Hiraoka}) as an easy corollary of our previous theorem. Further, we can easily prove a fundamental uniqueness result for minimal spanning acycles relying upon this correspondence and the uniqueness of persistence diagrams \cite[Theorem 2.1]{zomorodian2005computing},\cite[Theorem 1.3]{chazal2012structure}, \cite[Theorem 1.1]{crawley2015decomposition}\footnote{Uniqueness follows from certain assumptions on finiteness and the Krull-Remak-Schmidt theorem of isomorphisms of indecomposable subgroups, which always hold in the setting of finite simplicial complexes.}.
%
%
\begin{lemma}
\label{thm:minimal spanning acycle_weights}
Let $\K$ be a weighted $d-$complex such that $\beta_{d-1}(\K^d) = 0$ and $M_1,M_2$ be two $d-$minimal spanning acycles in $\K$.
Let \gt{$c \in \Real$.}
Then we have that
\[ |\{ \sigma \in M_1 : w(\sigma) = c \}| = |\{\sigma \in M_2 : w(\sigma) = c \}| .\]
\end{lemma}
In the case of unique weights, the minimal spanning acycle is unique making the above lemma trivially true. In the case of non-unique weights, the minimal spanning acycle we obtain will depend on our choice of extension to a total order. However, the above theorem states that the weights of a minimal spanning acycle will be independent of this choice.

We now give an alternative characterization of a minimal spanning acycle that follows from the proof of Theorem \ref{thm:death_MSA}. Such a characterization of a minimal spanning tree has been very useful in the study of minimal spanning trees on infinite graphs (\cite[Chapter 11]{Lyons16}, \cite[Proposition 2.1]{Alexander1995}). A similar characterization for minimal spanning tree is known as {\em the creek-crossing criterion} in \cite{Alexander1995}.  However, we wish to point out now that these different characterizations do not coincide even in the infinite graph case (\cite[Proposition 2.1]{Alexander1995}).
\begin{lemma}
\label{lem:char_MSA}
Let $\K$ be a weighted simplicial complex with $\beta_{d-1}(\K) = 0$. Let $\sigma \in \F^d$ and $M$ be the minimal spanning acycle with respect to a total order $<_l$ extending the partial order induced by $w$. Then $\sigma \in M$ iff $\partial \sigma \notin \partial (C_d(\K(\sigma^-))).$
\end{lemma}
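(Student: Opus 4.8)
The plan is to read the equivalence directly off the simplicial Kruskal's algorithm (Algorithm~\ref{alg:Kruskal}), in the same spirit as the proof of Theorem~\ref{thm:death_MSA}. By Remark~\ref{rem:uniqueness_msa} the complex $\K$ has a unique minimal spanning acycle $M$ with respect to $<_l$, and by Theorem~\ref{lem:SimplicialKruskalOutcome} this $M$ is exactly the output of Kruskal's algorithm run with respect to $<_l$. It is cleanest to run the algorithm with its early-stopping clause ``$\beta_{d-1}(\K^{d-1}\cup S)\neq 0$'' removed: as soon as $S$ is a spanning acycle it must equal $M$ (all spanning acycles have the same cardinality $\gamma_d(\K)$ by Lemmas~\ref{lem:cardinality} and~\ref{lem:existence}, so no proper subset of $M$ can be spanning), and thereafter Lemma~\ref{lem:+ve_SA} shows every remaining $d$-face is positive with respect to $\K^{d-1}\cup M$ and hence is never added, so the output is unchanged. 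With this convention every $d$-face gets processed, and since faces are processed in increasing $<_l$-order, the acycle held by the algorithm at the moment a $d$-face $\tau$ is processed is precisely $S_\tau := M\cap\K(\tau^-) = \{\rho\in M:\ w(\rho)<_l w(\tau)\}$. Hence $\tau\in M$ if and only if $\tau$ is a negative face with respect to $\K^{d-1}\cup S_\tau$, which by~\eqref{eqn:negative_image} (and the convention $C_d(S_\tau)=C_d(\K^{d-1}\cup S_\tau)$) translates into
\[
\tau\in M \quad\Longleftrightarrow\quad \partial\tau\notin\partial\big(C_d(S_\tau)\big).
\]

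The second ingredient is the span identity $\partial(C_d(S_\sigma)) = \partial(C_d(\K(\sigma^-)))$ for every $d$-face $\sigma$, which is essentially what is proved inside the argument for Theorem~\ref{thm:death_MSA}; I would include a short proof for completeness. The inclusion ``$\subseteq$'' is immediate from $S_\sigma\subseteq\F^d(\K(\sigma^-))$. For ``$\supseteq$'', note that $\partial(C_d(\K(\sigma^-)))$ is spanned by $\{\partial\tau:\tau\in\F^d(\K(\sigma^-))\}$, so it suffices to check $\partial\tau\in\partial(C_d(S_\sigma))$ for each such $\tau$. If $\tau\in M$ then $\tau\in S_\sigma$ and there is nothing to prove; if $\tau\notin M$, then the displayed equivalence applied to $\tau$ gives $\partial\tau\in\partial(C_d(S_\tau))$, and $S_\tau\subseteq S_\sigma$ since $w(\tau)<_l w(\sigma)$ forces $\K(\tau^-)\subseteq\K(\sigma^-)$; either way $\partial\tau\in\partial(C_d(S_\sigma))$.

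Combining the two ingredients for the face $\sigma$ in the statement yields $\sigma\in M$ iff $\partial\sigma\notin\partial(C_d(S_\sigma)) = \partial(C_d(\K(\sigma^-)))$, which is the claim; for a non-injective monotonic $w$ this is understood for the chosen total order $<_l$, as in Remark~\ref{rem:uniqueness}. The only genuinely delicate point is the bookkeeping in the first paragraph --- identifying the intermediate state of Kruskal's algorithm with $M\cap\K(\sigma^-)$ and checking that deleting the early-stopping clause leaves the output (and hence all these intermediate states) unchanged. Once that is pinned down, the rest is a one-line application of~\eqref{eqn:negative_image} together with the span identity.
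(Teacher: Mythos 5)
Your proof is correct and follows essentially the same route as the paper's: the paper likewise obtains the lemma by combining the span identity $\partial(C_d(M\cap\K(\sigma^-)))=\partial(C_d(\K(\sigma^-)))$ (which it cites from the proof of Theorem~\ref{thm:death_MSA}) with Kruskal's algorithm and \eqref{eqn:negative_image}. You merely re-derive the span identity in a self-contained way (directly rather than by the contradiction argument used inside Theorem~\ref{thm:death_MSA}), which is a harmless and arguably cleaner packaging of the same two ingredients.
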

\begin{proof}
From the proof of Theorem \ref{thm:death_MSA}, we know that $\partial (C_d(M \cap \K(\sigma^-)) = \partial (C_d(\K(\sigma^-)).$ Thus, by Kruskal's algorithm and \eqref{eqn:negative_image}, we have that $\sigma \in M$ iff $\partial \sigma \notin \partial (C_d(\K(\sigma^-)))$.
\end{proof}	
%

\subsection{Stability result}
Here, we provide a proof for Theorem~\ref{thm:l1_stability}.
\begin{proof}[Proof of Theorem \ref{thm:l1_stability}]
\label{sec:stability}
Again, it suffices to prove the theorem for death times and the proof for birth times is quite identical. Secondly, due to Theorem \ref{thm:death_MSA}, we shall prove the stability result for weights of a minimal spanning acycle. We shall also assume $0 \leq p < \infty$ and the extension to $p = \infty$ follows by a standard limiting argument.

Let $M,M'$ be the two minimal spanning acycles corresponding to $f,f'$. We begin with the following case: where $f,f'$ differ precisely on one simplex $\sigma$ and $f(\sigma) = a, f'(\sigma)= a', |a-a'| = c$. In this case, \gt{as we shall show later}, $|M \triangle M'| \leq 2$, where $\triangle$ denotes the symmetric difference between the two sets. Since $M,M'$ have equal cardinalities, $|M \triangle M'| \in \{0,2\}.$ If $M \triangle M' = \emptyset$, we are done since the identity map between the simplices in $M,M'$ gives that
\[ \inf_{\pi} \sum_{\sigma \in M} |f(\sigma) - f'(\pi(\sigma))|^p \leq c^p = \sum_{\sigma \in \F^d}|f(\sigma)-f'(\sigma)|^p.\]
In the other case, $M \triangle M' = \{\sigma_1,\sigma_2\}$ with $\sigma_1 \in M,\sigma_2 \in M'$ and one of the $\sigma_i$'s is $\sigma$. \gt{Below}, we shall also show that $|f(\sigma_1) - f'(\sigma_2)| \leq c$. This again shows that
\[ \inf_{\pi} \sum_{\sigma \in M} |f(\sigma) - f'(\pi(\sigma))|^p \leq c^p = \sum_{\sigma \in \F^d}|f(\sigma)-f'(\sigma)|^p.\]
By a recursive application of the above case, we can prove the theorem for the general case of $f,f'$ differing in many simplices.

\gt{For the rest of the proof, we shall focus only on the case of $f,f'$ differing on exactly one simplex, say $\sigma \in \F^d,$ and derive the claims made above.} Without loss of generality, assume that $f,f'$ assign distinct weights to distinct faces; the case of non-distinct weights can be proved by appealing again to Remarks \ref{rem:uniqueness} and \ref{rem:uniqueness_msa}. \gt{Given a set $I \subset \bR$, $M(I) := \{\sigma \in M : f(\sigma) \in I\}.$ Also, as before, let $M(a) = M((-\infty,a])$ and $M(a-) = M((-\infty,a))$. \remove{Lastly, recall that $\K(a):= \{\sigma \in \K: f(\sigma) \leq a\}.$} Define these notions, similarly, for  notions for $M'.$ \remove{and $\K'.$}}

We shall break the proof into four cases where the first two take care of the trivial cases, i.e., when $M \triangle M' = \emptyset$. We shall assume that both $M$ and $M'$ are generated by simplicial Kruskal's algorithm (Algorithm \ref{alg:Kruskal}). \\[-2ex]

\noindent{\bf Case 1:} Suppose $\sigma \in M$ and $a > a'$, i.e., $f(\sigma)>f'(\sigma)$. In this case, since $M(a)$ and $M'(a)$ are both maximal acycles in $\K(a),$ we have that $|M(a)| = |M'(a)|.$ Further, by Kruskal's algorithm, $M(a'-) = M'(a'-)$. \gt{Since $\sigma \in M,$ $\sigma$ is negative w.r.t. $M(a-).$ Now, because $M'(a'-) \subset M(a-),$ it follows from Lemma \ref{lem:+ve_faces} that $\sigma$ is negative w.r.t. $M'(a'-)$ and, therefore, $\sigma \in M'.$} \remove{$\K^{d-1} \cup(a'-) \cup \sigma$ is acyclic and hence $\sigma \in M'$.} \remove{Further, since $\K^{d-1} \cup M(a-) \cup \sigma$ is acyclic,} \gt{Similarly, by Lemma~\ref{lem:+ve_faces}, it is also easy to see that $M'((a', a)) \subset M((a',a)).$ Consequently, it follows that $M(a) = M'(a)$ since $M(a)$ and $M'(a)$ have equal cardinalities.} \remove{
$M((a',a)) \subset M'$ and hence $M(a) =  M'(a)$ since $M(a),M'(a)$ have equal cardinalities.} \gt{Continuing with Kruskal's algorithm from $a$ onwards gives $M =  M'$}.\\[-2ex]

\noindent{\bf Case 2:} \gt{Suppose $\sigma \notin M$ and $a' > a$. This case is similar to Case 1 above. The main differences are as follows. First, we note that $M(a-) = M'(a-).$ Second, since $\sigma \notin M,$ $M((a, a')) \subset M'((a, a')).$ Arguing as before, it then follows that $M = M'.$} \remove{In this case, $M(a'-) = M'(a'-)$ and further $\K^{d-1} \cup M'(a'-) \cup \sigma$ is cyclic by Lemma \ref{lem:+ve_SA} and so again by Kruskal's algorithm $M = M'$.} \\[-2ex]

\noindent{\bf Case 3:} Suppose $\sigma \in M$ and $a < a'$. If $\sigma \in M'$, then arguing as in Case 1 gives $M = M'$. Thus, let $\sigma \notin M'$. \gt{We show that, for some $d-$face $\tau,$} $M'((a,a')) \setminus M((a,a')) = \{\tau\},$ $M \triangle M' = \{\sigma,\tau\},$ and $f'(\tau) - f(\sigma) \leq a' - a = c,$ as needed.

To show the same, note that by Kruskal's algorithm $M(a-) = M'(a-)$ and $|M(a')| = |M'(a')|$. \gt{Further, if $\tau'$ with $f(\tau') \in (a, a')$ is negative w.r.t. $\K(\tau'-),$ then Lemma~\ref{lem:+ve_faces} shows that $\tau'$ is also negative w.r.t. $\K'(\tau'-)$ as well. Hence, from Lemma~\ref{lem:char_MSA} and \eqref{eqn:negative_image}, it follows that $M((a, a')) \subset M'((a, a')).$ Now, because of the equality of cardinalities, there exists a $\tau \notin M$ with \gt{$f'(\tau) \in (a, a')$} \remove{$w(\tau) \in (a,a')$} such that $M'(a') = M(a) \cup M((a,a')) \cup \tau.$ The desired results are then easy to see.} \\[-2ex]

\remove{
So, if we show that $M(a,a') \subset M'(a,a')$ then because of the equality of cardinalities, there exists a $\tau$ with \gt{$f'(\tau) \in (a, a')$} \remove{$w(\tau) \in (a,a')$} such that $M'(a') = M(a) \cup M((a,a')) \cup \tau$.

We shall prove $M((a,a')) \subset M'((a,a'))$ by contradiction. Let $M((a,a')) = \{\tau_1,\ldots,\tau_k\}$ and $\tau_i$ be the first simplex (in increasing order of weights) such that $\tau_i \notin M'$. This means that some other simplex $\tau \notin M(a,a')$ that should have been added in $M'$ before $\tau_i$ that creates a cycle along with $\tau_i$ and also a cycle with $\sigma$. More formally,  there exists a $\tau \notin M$ with $a < f(\tau) < f(\tau_i)$ such that $\K^{d-1} \cup M(a-) \cup M((a,f(\tau))) \cup \tau$ is acyclic but $\K^{d-1}  \cup M(a-) \cup M((a,f(\tau))) \cup\tau \cup \tau_i$ and $\K^{d-1} \cup M(a-) \cup M((a,f(\tau))) \cup \sigma \cup \tau$ are cyclic. These three statements together imply that there exist $b,b'$ non-zero such that
\[\partial \tau - b\partial \sigma \in \partial (C_d(M(f(\tau)-)) - \sigma) \, \, ; \, \, \partial \tau_i - b'\partial \tau \in \partial (C_d(M(f(\tau)-)) - \sigma). \]
The above two statements imply that $\partial \tau_i - bb' \partial \sigma \in  \partial (C_d(M(f(\tau)-)) - \sigma)$, a contradiction to the acyclicity of $M$. Hence $\tau_i \in M', \,  \forall i =1,\ldots,k$ and so $M((a,a')) \subset M'((a,a'))$ as required. Thus, $M(a') \triangle M'(a') = \{\sigma,\tau\}$.

Now we need to show that $M(a',\infty) = M'(a',\infty)$ to conclude that $M \triangle M' =  \{\sigma,\tau\}$. Since $M(a') \setminus M'(a') = \{\sigma\}$ and by Kruskal's algorithm $\sigma$ is positive with respect to $M'(a') = M(a-) \cup M(a,a') \cup \tau$, we have that $\partial \sigma \in \partial (C_d(M'(a')))$. So, $\partial(C_d(M(a'))) = \partial(C_d(M'(a')))$ and hence by \eqref{eqn:negative_image} and \eqref{eqn:positive_image}, negativity and positivity of simplices in Kruskal's algorithm remain unchanged after $a'$ whether we are considering $M$ or $M'$.} 

\remove{Suppose $\sigma$ is a negative simplex and $f'(\sigma)-f(\sigma)=c$, i.e., $f(\sigma)<f'(\sigma)$. In this case, there are two possibilities. If it remains a negative simplex, then by the same argument as in Case 1, one death time moves by $c$. The alternative, is that it becomes a positive simplex. However, this implies that $\sigma$ is in the span of $ \bdr|_{f'(\sigma)}$. This implies that there exists a simplex $\tau$ such that $f(\sigma)<f(\tau)<f'(\sigma)$, was positive for $f$ and negative in $f'$. This implies that one birth moved by $f'(\sigma) - f(\tau)<c$ and one death by $f(\tau)- f(\sigma)<c$.}

\noindent{\bf Case 4:} Suppose $\sigma \notin M$ and $a' < a$. Then either $\sigma \notin M'$ or $\sigma \in M'$. If $\sigma \notin M'$, then $M = M'$ as in Case 2. If $\sigma \in M'$, arguing as in Case 3, we have that $M((a',a)) \setminus M'((a',a)) = \{\tau\}, M(a,\infty) = M'(a,\infty)$ and hence $M \triangle M' = \{\sigma,\tau\}$ with $f(\tau) - f'(\sigma) \leq a - a' = c$. 
\end{proof}

\section{Weighted random complexes}
\label{sec:weighted_random}

\gt{Our first aim here is to look at weighted random complexes (Definition~\ref{defn:Generically_Perturbed_Weighted}) and derive our point process convergence result (Theorem~\ref{thm:perturbed_process_convergence}).
Our second aim is to show the other important consequence of our stability result (Corollary~\ref{thm:lifetimesum}).}

\gt{Towards proving Theorem~\ref{thm:perturbed_process_convergence}, we first consider a special case where the weights are i.i.d. uniform on all possible $d-$faces and $0$ elsewhere.}

\subsection{Random $d-$ complex : I.I.D. uniform weights}
\label{sec:uniformly_Weighted_Complex}

%

%
%
The {\em uniformly weighted $d-$complex } $\U_{n, d}$ is the randomly weighted $d-$ complex $\Lp_{n, d}$ with $\|\epsilon_n\|_{\infty} = 0$ and $\sF$ being the uniform distribution on \gt{$[0, 1]$} (see Definition \ref{defn:Generically_Perturbed_Weighted}); \gt{hence, $\phi = \phi'$ in this case}. The canonical filtration associated with $\U_{n, d}$ is $\{\U_{n, d}(t) : t \in [0, 1]\}.$ Trivially, the well-known random $d-$complex $Y_{n,d}(t)$ defined before Lemma \ref{lem:BettiNumber} is the same as $\U_{n,d}(t)$ in distribution.
%



Fix $d \geq 1.$ In this section, we show that the three point sets - nearest neighbour distances, death times, weights in the minimal spanning acycle - corresponding to $\U_{n,d}$ (see below Definition \ref{defn:Generically_Perturbed_Weighted}), under appropriate scaling converge to a Poisson point process as $n \to \infty$.


\subsubsection{Extremal nearest neighbour distances}
\label{sec:nnd}

Fix $\sigma \in \F^{d - 1}(\U_{n, d}).$ Then, $C(\sigma)$ defined w.r.t. $\phi,$ as in \eqref{eqn:Connection Time}, denotes the nearest neighbour distance of $\sigma.$ By considering the filtration $\{\U_{n, d}(t) : t \in [0, 1]\},$ note that $\sigma$ is isolated (not part of any $d-$face) exactly between times $0$ and $C(\sigma)$ in $\{\U_{n, d}(t) : t \in [0,1]\}.$ That is, the first coface of $\sigma$ appears at $t = C(\sigma).$ 

For each $\sigma \in \F^{d - 1}(\U_{n, d}),$ let $\bar{C}(\sigma) := n C(\sigma) - d \log n + \log(d!)$ and let $\PoiF$  be the scaled point set given by
\begin{equation}
\label{eqn:Scaled_NN_Dist_Process}
\PoiF := \{\bar{C}(\sigma): \sigma \in \F^{d - 1}(\U_{n, d})\}.
\end{equation}
Viewing the latter as a point process, for any $R \subseteq \mathbb{R},$ we set
\begin{equation}
\label{eqn:PoiFinS}
\PoiF(R) := |\{\sigma \in \F^{d - 1}(\U_{n, d}): \bar{C}(\sigma) \in R\}|.
\end{equation}
For any $c \in \Real,$ let $\PoiF(c, \infty) \equiv \PoiF((c, \infty)).$ Separately, let $N_{n, d - 1}(p)$ denote the number of isolated $(d - 1)-$faces in $Y_{n, d}(p).$ 

Since $\U_{n, d}(p)$ has the same distribution as $Y_{n, d}(p),$ it follows that $\PoiF(np - d\log n + \log(d!),\infty)$ has the same distribution as $N_{n, d - 1}(p).$  Also, whenever $p_n$ is of the form as in \eqref{eqn:pn}, then we know from Lemma \ref{lem:BettiNumber} that, as $n \to \infty,$ $N_{n, d - 1}(p_n)$ converges to  $\Poi(e^{-c}),$ the poisson random variable with mean $e^{-c}.$ From this, we have $\PoiF(c, \infty) \Rightarrow \Poi(e^{-c})$ as $n \to \infty.$ We now extend this to a multivariate convergence, thereby proving convergence of point processes $\PoiF$. Recall that  $\PoiP$ is the Poisson point process as in Theorem~\ref{thm:perturbed_process_convergence}.
\begin{prop}
\label{thm:FacesPoissonProcessConvergence}
\gt{As $n \to \infty,$} \remove{with $p_n$ as in \eqref{eqn:pn},} $\PoiF$ converges in distribution to $\PoiP$.
\end{prop}

\begin{proof} 	Let $I := \cup_{j = 1}^{m} (a_{2j-1}, a_{2j}] \subseteq \mathbb{R}$ be an arbitrary but fixed union of finite number of disjoint intervals. Since $\PoiP$ is simple and does not contain atoms, as per Lemma~\ref{lem:KallenbergResult}, it suffices to prove the following two statements in order to prove weak convergence of the point process $\PoiF$ :
\[ (i) \lim_{n \to \infty} \mathbb{E}[\PoiF(I)] = \mathbb{E}[\PoiP(I)] \, \, \, \mbox{and} \, \, \,		
(ii) \, \PoiF(I) \stackrel{d}{\Rightarrow} \PoiP(I) \, \, \mbox{as} \, \, n \to \infty. \]
\gt{In turn, to establish these two statements, we make use of the method of factorial moments, i.e., show that}
\begin{equation}
\label{eqn:fact_mom_convergence}
\Exp{[(\PoiF(I))^{(\ell)}]} \to \left(\int_{I} e^{-x} dx
\right)^{\ell} = \Exp{[(\PoiP(I))^{(\ell)}]}, \quad \forall \ell \geq 1,
\end{equation}
\gt{where, for $m \in \mathbb{N},$ the notation $m^{(\ell)} = m (m - 1) \cdots (m - \ell + 1)$ so that $\Exp{[(\PoiF(I))^{(\ell)}]}$ represents the $\ell-$th factorial moment of the random variable $\PoiF(I).$ This suffices since Statement (i) above is precisely the $\ell = 1$ case, while Statement (ii) follows due to \cite[Theorem 2.4]{Hofstad16}.} For a brief motivation on the method of factorial moments, see Appendix~\ref{app:Method.Of.Factorial.Moments}.

The rest of the proof concerns proving \eqref{eqn:fact_mom_convergence}. Let $\ell \geq 1$ be fixed. Denote $\ell-$th factorial moment of $\PoiF(I)$ by $M_{n, d}^{(\ell)}$. For $\sigma \in \F^{d - 1}(\U_{n, d})$ and $R \subseteq \Real,$ let $1(\sigma; R) \equiv \1[\bar{C}(\sigma) \in R],$ where $\1$ denotes the indicator function. Then, clearly,
\[
\PoiF(I) = \sum_{\sigma \in \F^{d - 1}(\U_{n, d})} 1(\sigma; I).
\]

\gt{Note that if $X = 1_a + 1_b,$ i.e., it is a sum of two indicators, then $X^{(2)} = 1_a 1_b + 1_b 1_a,$ while $X^{(\ell)} = 0$ for all $\ell \geq 3.$ On the other hand, if $X = 1_a + 1_b + 1_c,$ then $X^{(2)} = 2\times 1_a1_b + 2 \times 1_a1_c + 2 \times 1_b 1_c,$  $X^{(3)} = 6 \times 1_a 1_b 1_c,$ while $X^{(\ell)} = 0$ for all $\ell \geq 4.$} Proceeding along these lines, it follows using induction on $\ell$ \dy{and linearity of expectation} that 
\[
M_{n, d}^{(\ell)} = \sum_{\pmb{\sigma} \in \sI_{n, d}^{(\ell)}} \mathbb{E}\left[\prod_{i = 1}^{\ell} 1(\sigma_i; I) \right],
\]
where
\[
\sI_{n, d}^{(\ell)} :=  \{\pmb{\sigma} \equiv (\sigma_1, \ldots, \sigma_{\ell}) : \sigma_i \in \F^{d - 1}(\U_{n, d}) \text{ and
no two of } \sigma_1, \ldots, \sigma_\ell \text{ are same}\}.
\]
To simplify the computation of $M_{n, d}^{(\ell)},$ we group the faces $\pmb{\sigma} \in \sI_{n, d}^{(\ell)}$ which give the same value for $\mathbb{E}\left[\prod_{i = 1}^{\ell} 1(\sigma_i; I)\right].$ We do this as follows. For $\pmb{\sigma} \in \sI_{n, d}^{(\ell)},$ let
\[
\gamma(\pmb{\sigma}) \equiv (|\cap_{i \in S}\sigma_i|: S \subseteq
\{1, \ldots, \ell\},|S| \geq 2)
\]
denote its intersection type. For $\pmb{\sigma}, \pmb{\sigma^\prime} \in \sI_{n,
d}^{(\ell)},$ we will say that both have similar intersection type, denoted by $\pmb{\sigma} \sim
\pmb{\sigma^\prime},$ if there exists a permutation $\pi$ of the faces in $\pmb{\sigma^\prime}$ such that
$\gamma(\pmb{\sigma}) = \gamma(\pi(\pmb{\sigma^\prime})).$ It is easy to see that $\sim$ is an equivalence
relation. Let $\Gamma := \{[\pmb{\sigma}]\}$ denote the  quotient of $\sI_{n, d}^{(\ell)}$ under $\sim$ with
$[\pmb{\sigma}]$ denoting the equivalence class of $\pmb{\sigma}.$ Since the number of ways in which $\ell$ distinct $(d - 1)-$faces can intersect each other is finite, we have that the number of equivalence classes in $\Gamma,$ i.e., $|\Gamma|,$ is upper bounded by some constant (w.r.t. $n$). Indeed $|\Gamma|$ depends on $d$ and $\ell,$ but these are fixed a priori in our setup. Lastly, note that for $\pmb{\sigma} \in \sI_{n, d}^{(\ell)},$ the cardinality of its equivalence class $|[\pmb{\sigma}]|$ indeed
depends on $n.$

Fix $\pmb{\sigma} \equiv (\sigma_1, \ldots, \sigma_{\ell})$ and $\pmb{\sigma^\prime} \equiv (\sigma^\prime_1,
\ldots, \sigma^{\prime}_{\ell})$ in $\sI_{n, d}^{(\ell)}$ such that $\pmb{\sigma} \sim \pmb{\sigma^\prime}.$ Then
\[
\mathbb{E}\left[\prod_{i = 1}^{\ell} 1(\sigma_i; I)\right] = \mathbb{E}\left[\prod_{i = 1}^{\ell} 1(\sigma^\prime_i; I)\right].
\]
Hence, $M_{n, d}^{(\ell)}$ can be rewritten as
\begin{equation*}
M_{n, d}^{(\ell)}  =  \sum_{[\pmb{\sigma}] \in \Gamma} \;  \sum_{\pmb{\sigma^\prime} \in \sI_{n, d}^{(\ell)} : \,
\pmb{\sigma^{\prime}} \sim \pmb{\sigma}} \mathbb{E}\left[\prod_{i = 1}^{\ell} 1(\sigma^\prime_i; I)\right]
=  \sum_{[\pmb{\sigma}] \in \Gamma} \; |[\pmb{\sigma}]| \mathbb{E}\left[\prod_{i = 1}^{\ell} 1(\sigma_i; I)\right].
\end{equation*}
Counting the number of ways in which $\ell$ distinct $(d - 1)-$faces from a total of $\tbinom{n}{d}$ can be arranged, we have $|\sI_{n, d}^{(\ell)}| = \ell ! \binom{\binom{n}{d}}{\ell}.$ For each $[\pmb{\sigma}] \in \Gamma,$ we have $[\pmb{\sigma}] \subseteq \sI_{n, d}^{(\ell)};$ hence, $|\pmb{[\sigma]}| = c_{n} ([\pmb{\sigma}]) |\sI_{n, d}^{(\ell)}|,$ for some number $c_{n}([\pmb{\sigma}]) \in [0,1].$ Clearly
\begin{equation}
\label{eqn:ConstantSum}
\sum_{[\pmb{\sigma}] \in \Gamma}c_{n}([\pmb{\sigma}]) = 1.
\end{equation}
Hence, it follows that
\[
M_{n, d}^{(\ell)} = \sum_{[\pmb{\sigma}] \in \Gamma} \; c_{n}([\pmb{\sigma}]) \ell ! \binom{\binom{n}{d}}{\ell}
\mathbb{E}\left[\prod_{i = 1}^{\ell} 1(\sigma_i; I)\right].
\]

For every $\sigma \in \F^{d - 1}(\U_{n, d}),$ we have
\begin{equation*}
1(\sigma; I) = \sum_{j = 1}^{m}1(\sigma; (a_{2j-1}, a_{2j}]) = \sum_{j = 1}^{m}( 1(\sigma; (a_{2j-1}, \infty)) - 1(\sigma; (a_{2j}, \infty))).
\end{equation*}
Therefore, it follows that for any $\pmb{\sigma} \equiv (\sigma_1, \ldots, \sigma_{\ell}) \in \sI_{n, d}^{(\ell)},$
\[
\prod_{i = 1}^{\ell} 1(\sigma_i; I) = \sum_{(\alpha_1, \ldots, \alpha_{\ell}) \in \{1, \ldots, 2m\}^\ell}  \prod_{i = 1}^{\ell} (-1)^{\alpha_i+1}1(\sigma_i; (a_{\alpha_i}, \infty)),
\]
where $\{1, \ldots, 2m\}^\ell$ is the the $\ell-$ary cartesian power of $\{1, \ldots, 2m\}.$ Hence,
\begin{equation}
\label{eqn:Mnd_SumForm}
M_{n, d}^{(\ell)} = \sum_{[\pmb{\sigma}] \in \Gamma} \; c_{n}([\pmb{\sigma}]) \hspace{-0.5em} \sum_{(\alpha_1, \ldots, \alpha_{\ell}) \in \{1, \ldots, 2m\}^\ell }  \ell ! \binom{\binom{n}{d}}{\ell}(-1)^{\sum_i\alpha_i+ \ell}
\mathbb{E}\left[\prod_{i = 1}^{\ell} 1(\sigma_i; (a_{\alpha_i}, \infty))\right].
\end{equation}

From the scaling of $C(\sigma)$, for any $\sigma \in \F^{d -
1}(\U_{n, d})$ and any $a \in \mathbb{R},$
\[
1(\sigma;(a, \infty)) = 		\1\left[C(\sigma) > \frac{a + d \log n - \log(d!)}{n}\right].
\]
Combining this with the definitions of $C(\sigma)$ and $\U_{n,d}$, observe that
\[
\ell ! \binom{\binom{n}{d}}{\ell} \mathbb{E}\left[\prod_{i = 1}^{\ell} 1(\sigma_i; (a_{\alpha_i},
\infty))\right] \sim \frac{n^{d \ell}}{\left(d!\right)^\ell} \prod_{i = 1}^{\ell} \left(1 - \frac{a_{\alpha_i} + d \log n - \log(d!)}{n} \right)^{n - \kappa_i}.
\]
Here $\kappa_1, \ldots, \kappa_\ell \geq 0$ are some constants depending on how many vertices are common between the faces $\sigma_1, \ldots, \sigma_\ell.$ From this, irrespective of $\kappa_1, \ldots, \kappa_{\ell}$ (as these are constants independent of $n$), we have
\[
\lim_{n \to \infty} \ell ! \binom{\binom{n}{d}}{\ell} \mathbb{E}\left[\prod_{i = 1}^{\ell} 1(\sigma_i; (a_{\alpha_i},
\infty))\right]
=  e^{-\sum_{i = 1}^{\ell} a_{\alpha_i}}.
\]
\gt{
Therefore, the inner sum in \eqref{eqn:Mnd_SumForm} converges to $ \left(\sum_{j = 1}^{m} [e^{-a_{2j-1}} - e^{-a_{2j}}]\right)^{\ell} = \left(\int_{I} e^{-x} dx
\right)^{\ell}$ for every $[\pmb{\sigma}] \in \Gamma.$ Now, using \eqref{eqn:ConstantSum}, it follows that 
\[
\lim_{n \to \infty} M_{n, d}^{(\ell)} =  \left(\int_{I} e^{-x} dx
\right)^{\ell},	\]
as desired in \eqref{eqn:fact_mom_convergence}.}
\end{proof}

\subsubsection{Extremal death times}
\label{sec:death_times}

We now discuss death times in the persistence diagram. First, we state a lemma explaining why nearest neighbour distances approximate death times.
\begin{lemma}
\label{lem:ExpBettiIsoFaceZero}
Fix $d \geq 1.$ Let $N_{d - 1}(Y_{n, d}(p_n))$ be the number of isolated $(d - 1)-$faces in $Y_{n, d}(p_n)$ with $p_n$ as in \eqref{eqn:pn}. Then
\[
\lim_{n \to \infty} \EP|\beta_{d - 1}(Y_{n, d}(p_n)) - N_{d - 1}(Y_{n, d}(p_n))| = 0.
\]
\end{lemma}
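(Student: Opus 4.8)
The plan is to show that $\beta_{d-1}(Y_{n,d}(p_n))$ is, with high probability, accounted for entirely by isolated $(d-1)$-faces, so that the $L^1$ gap vanishes. First I would use the one-sided bound $\beta_{d-1}(Y_{n,d}(p_n)) \geq N_{d-1}(Y_{n,d}(p_n))$, which follows from Corollary~\ref{cor:dAndHighFacesUnaffectBettid-1}: each isolated $(d-1)$-face contributes an independent generator to $\Hg_{d-1}$ (more precisely, deleting the cofaces of a $(d-1)$-face $\tau$ can only happen when $\tau$ is isolated, and these deletions are ``independent'' in the Betti-number bookkeeping since the relevant boundary spaces are disjoint). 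Hence $|\beta_{d-1} - N_{d-1}| = \beta_{d-1} - N_{d-1} \geq 0$, and it suffices to bound $\EP[\beta_{d-1}(Y_{n,d}(p_n))] - \EP[N_{d-1}(Y_{n,d}(p_n))]$ and show it tends to $0$.

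Next, by Lemma~\ref{lem:BettiNumber} (the Stepanov/Kahle--Pittel result), $\beta_{d-1}(Y_{n,d}(p_n)) \Rightarrow \Poi(e^{-c})$, and by the discussion following that lemma together with the identification of $\PoiF(c,\infty)$ with $N_{n,d-1}(p_n)$ in Section~\ref{sec:nnd}, also $N_{d-1}(Y_{n,d}(p_n)) \Rightarrow \Poi(e^{-c})$ — indeed the factorial moment computation in the proof of Theorem~\ref{thm:FacesPoissonProcessConvergence} gives $\EP[N_{d-1}(Y_{n,d}(p_n))] \to e^{-c}$. So both random variables converge in distribution to the same limit, and their difference is nonnegative. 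The remaining task is to upgrade convergence in distribution to convergence of expectations for $\beta_{d-1}$, i.e. to show $\EP[\beta_{d-1}(Y_{n,d}(p_n))] \to e^{-c}$; once we have that, $\EP[\beta_{d-1} - N_{d-1}] = \EP[\beta_{d-1}] - \EP[N_{d-1}] \to e^{-c} - e^{-c} = 0$, and nonnegativity of the integrand finishes it.

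For the convergence of $\EP[\beta_{d-1}(Y_{n,d}(p_n))]$ I would invoke uniform integrability: it is enough to bound $\EP[\beta_{d-1}(Y_{n,d}(p_n))^2]$ (or even just $\EP[\beta_{d-1}]$ together with a truncation argument). The standard route — which is exactly the content of the proofs in \cite{linial2006homological, meshulam2009homological, kahle2014inside} — is to split $\beta_{d-1} = z_{d-1} - b_{d-1}$ and, via the cochain/cocycle description, dominate $\beta_{d-1}$ by a count related to ``small'' obstructions; the dominant contribution is from isolated $(d-1)$-faces, whose count has all moments converging to those of $\Poi(e^{-c})$, while the contribution of everything else has expectation $o(1)$. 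Concretely, one shows $\EP[\beta_{d-1}(Y_{n,d}(p_n))] = \EP[N_{d-1}(Y_{n,d}(p_n))] + o(1)$ directly by a first-moment estimate on the ``excess'' cocycles (non-isolated-face generators), which are spanned by supports of size at least some growing quantity and hence have vanishing expected count under $p_n$ near the connectivity threshold.

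The main obstacle is precisely this last step: rigorously bounding $\EP[\beta_{d-1}(Y_{n,d}(p_n))] - \EP[N_{d-1}(Y_{n,d}(p_n))] = o(1)$, which requires controlling the expected number of ``non-trivial'' minimal $(d-1)$-cocycles whose support is not a single face. This is the technical heart of the Meshulam--Wallach / Kahle--Pittel analysis and would be imported essentially wholesale; the cleanest phrasing is to cite that $\beta_{d-1}(Y_{n,d}(p_n))$ is uniformly integrable (e.g. via a second-moment bound from those works) so that $\EP[\beta_{d-1}] \to e^{-c}$ follows from Lemma~\ref{lem:BettiNumber}, and then combine with $\EP[N_{d-1}] \to e^{-c}$ and $\beta_{d-1} \geq N_{d-1}$ to conclude $\EP|\beta_{d-1} - N_{d-1}| \to 0$.
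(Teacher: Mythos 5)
There is a genuine gap, and it is right at the start: the deterministic inequality $\beta_{d-1}(Y_{n,d}(p_n)) \geq N_{d-1}(Y_{n,d}(p_n))$ on which your whole reduction rests is false for $d \geq 2$. Isolated $(d-1)$-faces do not each contribute an independent generator of $\Hg_{d-1}$: their indicator cochains are cocycles, but the corresponding cohomology classes can be linearly dependent modulo coboundaries when the isolated faces are adjacent. A concrete counterexample is the complete $(d-1)$-skeleton on $n$ vertices with no $d$-faces at all (take $d=2$): every edge is isolated, so $N_1 = \binom{n}{2}$, while $\beta_1 = \binom{n-1}{2} < N_1$. (Even for $d=1$ the inequality fails in the edge case where every vertex is isolated, since reduced $\beta_0 = n-1 < n = N_0$.) Corollary~\ref{cor:dAndHighFacesUnaffectBettid-1} only gives monotonicity of $\beta_{d-1}$ under adding $d$-faces; it does not yield a lower bound by the isolated-face count. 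Once this inequality is gone, the argument ``both converge in distribution to $\Poi(e^{-c})$, the difference is nonnegative, so convergence of means suffices'' collapses: without a sign on $\beta_{d-1} - N_{d-1}$, matching first moments tells you nothing about $\EP|\beta_{d-1} - N_{d-1}|$.

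What the paper actually proves (in the cohomological appendix of the earlier version) is the two-sided sandwich $\tilde{N}_{d-1} \leq \beta_{d-1} \leq \sum_{g \in \G_{d-1}} 1_g$ and $\tilde{N}_{d-1} \leq N_{d-1} \leq \sum_{g \in \G_{d-1}} 1_g$, where $\tilde{N}_{d-1}$ counts only \emph{disjoint} isolated $(d-1)$-faces (isolated faces none of whose neighbours are isolated) and $\G_{d-1}$ is a class of cocycle candidates with connected, weight-minimal support. This yields the purely deterministic bound $|\beta_{d-1} - N_{d-1}| \leq 3\sum_{g \in \G_{d-1},\, |\supp(g)|>1} 1_g$, and the expectation of the right-hand side vanishes by the first-moment estimates of Kahle--Pittel; no distributional convergence or uniform integrability is invoked. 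You correctly identified that the technical heart is a first-moment bound on non-singleton minimal cocycles imported from Meshulam--Wallach/Kahle--Pittel, but to reach that point you need both the replacement of $N_{d-1}$ by $\tilde{N}_{d-1}$ in the lower bound and a separate estimate showing $\EP[N_{d-1} - \tilde{N}_{d-1}] \to 0$ (which the paper gets by dominating adjacent pairs of isolated faces by cochains of support size $2$). Your proposal as written cannot be repaired without introducing these ingredients.
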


This lemma essentially follows from ideas in the proofs in \cite[Theorem 1.10]{kahle2014inside}. But, to the best of our knowledge, it has not been explicitly mentioned anywhere. The proof for the case $d \geq 2$ requires cohomological arguments and hence the entire proof along with more details on cohomology theory has been provided \gt{in Section \ref{app:cohomology} in the Appendix}.


Let $\PoiD$ denote the set of scaled death times in $\Hg_{d - 1}(\U_{n, d})$ as in the second item listed below  \eqref{eqn:Connection Time}. Let $c \in \Real$ be some arbitrary but fixed constant and let $p_n$ be as defined in \eqref{eqn:pn}. Then, for $n$ large enough, we have
$$\PoiD(c, \infty) = \beta_{d - 1}(\U_{n, d}(p_{n})) \quad \text{ and } \quad \PoiF(c, \infty) = N_{d - 1}(\U_{n, d}(p_{n})). $$
From Lemma~\ref{lem:ExpBettiIsoFaceZero}, it then immediately follows that 
\begin{equation}
\label{eqn:ExpDeathBettiIsoFaceZero}
    \lim_{n \to \infty}\mathbb{E}|\PoiD(c, \infty) - \PoiF(c, \infty)| = 0.
\end{equation}
	
Now we are ready to prove the convergence result for scaled death times.
\begin{prop}
\label{thm:DeathPoissonProcessConvergence}
As $n \to \infty,$ $\PoiD$ converges in distribution to the Poisson point process $\PoiP.$
\end{prop}
\begin{proof}
Let $I := \cup_{j = 1}^{m} (a_{2j-1}, a_{2j}] \subseteq \mathbb{R}$ be some finite union of disjoint intervals. Since $\PoiP$ is simple and does not contain atoms, again as per Lemma~\ref{lem:KallenbergResult}, to prove the desired result, it suffices to show that:
\[(i)  \lim_{n \to \infty} \mathbb{E}[\PoiD(I)] = \mathbb{E}[\PoiP(I)] \, \, \, \mbox{and} \, \, \,
(ii) \, \PoiD(I) \stackrel{d}{\Rightarrow} \PoiP(I) \, \, \mbox{as} \, \, n \to \infty.\]
From triangle inequality,
\begin{equation}
\label{eqn:Diff.PoiD.PoiF.Dist}
|\PoiD(I) - \PoiF(I)|  \leq \sum_{j = 1}^{2m}|\PoiD(a_j, \infty) - \PoiF(a_j, \infty)|.		
\end{equation}
By combining this with \eqref{eqn:ExpDeathBettiIsoFaceZero} and Statement (i) from above \eqref{eqn:fact_mom_convergence}, we get (i).

The same argument also shows that $|\PoiD(I) - \PoiF(I)| \to 0$ in probability as $n \to \infty.$ Combining this with Slutsky's theorem\footnote{The relevant version of Slutsky's theorem that we use is the following: If the random variables $X,$ $X_1, X_2, \ldots,$ and $Y_1, Y_2, \ldots$ is such that $X_n \Rightarrow X$ and $|X_n - Y_n| \to 0$ in probability, then $Y_n \Rightarrow X.$} \cite[Chapter 3, Corollary 3.3]{ethier2009markov} and Statement (ii) from above \eqref{eqn:fact_mom_convergence}, we obtain (ii) as desired.
\end{proof}
\subsubsection{Extremal weights in the $d-$minimal spanning acycle}
\label{sec:minimal spanning acycle_weights}
Again fix $d \geq 1.$ Viewing $\U_{n, d}$ as a weighted simplicial complex, let $M$ denote its $d-$minimal spanning acycle. And let
\begin{equation}
\label{eqn:scaledWeights}
\PoiM := \{n w(\sigma) - d \log n + \log(d!): \sigma \in M\}
\end{equation}
denote the set of scaled weights of the faces in the $d-$minimal spanning acycle of $\U_{n, d}.$ Using Theorem~\ref{thm:death_MSA} and Proposition~\ref{thm:DeathPoissonProcessConvergence}, we get the following result
immediately.
\begin{prop}
\label{thm:WeightPoissonProcessConvergence}
As $n \to \infty,$ $\PoiM$ converges in distribution to the Poisson point process $\PoiP$.
\end{prop}

\subsection{Random $d-$ complexes : I.I.D. generic weights with perturbation}
\label{sec:extensions}

\remove{ Here, in contrast to the previous section, we deal with simplicial complexes whose $d-$face weights are perturbations of some generic i.i.d. distribution. Our key result here is that if the perturbations decay sufficiently fast, then the point process convergence results from the previous section continue to hold. The proof is a transparent consequence of our stability result (Theorem \ref{thm:l1_stability}). We first define our model.
\begin{definition}
Let $d \geq 1$ be some integer. Consider $n$ vertices and let $\K_{n}^{d}$ be the complete $d-$skeleton on them. Let $\phip : \K_{n}^{d} \to [0,1]$ be the weight function with the following properties:
\begin{enumerate}
\item $\phip(\sigma) = 0$ for $\sigma \in \bigcup_{i = 0}^{d - 1} \F^i(\K_{n}^{d}),$ and
\item $\phip(\sigma) = \phi(\sigma) + \epsilon_n(\sigma)$ for $\sigma \in \F^{d}(\K_{n}^{d}),$ where $\{\phi(\sigma) : \sigma \in \F^{d}(\K_{n}^{d})\}$ are real valued i.i.d. random variables with some generic distribution $\sF : R \subseteq \Real \to [0, 1]$ perturbed respectively by $\{\epsilon_n(\sigma) :  \sigma \in \F^{d}(\K_{n}^{d})\},$ another set of real valued random variables (not necessarily independent or identically distributed).
\end{enumerate}
The {\em generically weighted $d-$complex with perturbation} $\Lp_{n, d}$ is the simplicial complex $\K_n^d$ weighted by $\phip.$ 	Associated with $\Lp_{n, d}$ is the canonical simplicial process given by the filtration $\{\Lp_{n, d}(t): t \in \Real\},$ where $\Lp_{n, d}(t) = \{\sigma \in \K_{n}^{d} : \phip (\sigma) \leq t\}.$
\end{definition}
For ease of use, we shall write $\sigma \in \Lp_{n, d}$ to mean $\sigma \in \K_{n}^{d}.$ Similarly, $\F^i(\Lp_{n, d})$ shall mean $\F^i(\K_{n}^{d})$ and so on.  Now consider the following three scaled point processes on $\Real.$

\begin{enumerate}
\item $\PoiFt := \{n \sF(C^\prime(\sigma)) - d\log n + \log(d!) : \sigma \in \F^{d - 1}(\Lp_{n, d})\},$ where, for $\sigma \in \Lp_{n, d},$
\[
C^\prime(\sigma) := \min\limits_{\tau \in \F^{d}(\Lp_{n, d}), \tau \supset \sigma } \phip(\tau).
\]

\item $\PoiDt : = \{n \sF(\Dp_i) - d\log n + \log(d!)\},$ where $\{\Dp_i\}$ denotes the set of death times in the persistence diagram of $\bH_{d - 1}(\Lp_{n, d})$ (see Definition \ref{defn:death_times}).

\item $\PoiMt := \{n \sF(\phip(\sigma)) - d\log n + \log(d!): \sigma \in M^\prime\},$ where $M^\prime$ is a $d-$minimal spanning acycle in $\Lp_{n, d}$ (see \eqref{defn:MSA}).
\end{enumerate}
}

We shall now prove our most general point process convergence result (Theorem \ref{thm:perturbed_process_convergence}) and then describe corollaries which give simpler bounds to verify the assumptions of this result. For the proof, we shall first consider the simplicial complex $\K_{n}^{d}$ weighted by $\phi$ alone, which we shall refer to as $\cL_{n, d}$. With respect to this $\cL_{n, d},$ define $C(\sigma), D_i, M, \PoiF, \PoiD,$ and $\PoiM,$ exactly as below Definition \ref{defn:Generically_Perturbed_Weighted}. 
%
%
\begin{prop}
\label{thm:All_PP_Converge}
Suppose that $\sF$ is \gt{continuous}. Then, the point processes $\PoiF, \PoiD,$ and $\PoiM,$ converge in distribution to $\PoiP$ as $n \to \infty.$
\end{prop}
\begin{proof}
Clearly, $\{\sF(\phi(\sigma))\}_{\sigma \in \F^{d}(\cL_{n, d})}$ are i.i.d. uniform $[0,1]$ random variables. The desired result is now immediate from Propositions \ref{thm:FacesPoissonProcessConvergence}, \ref{thm:DeathPoissonProcessConvergence}, and \ref{thm:WeightPoissonProcessConvergence}
\end{proof}
\remove{
\begin{theorem}
\label{thm:perturbed_process_convergence}
Suppose that $\sF$ is Lipschitz continuous and strictly increasing. If $n \|\epsilon_n\|_{\infty} \to 0$ in probability, then each of $\PoiFt, \PoiDt,$ and $\PoiMt$ converges in distribution to $\PoiP.$
\end{theorem}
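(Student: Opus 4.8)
The plan is to derive all three convergences from the stability estimate (Theorem~\ref{thm:l1_stability}), the unperturbed convergence (Theorem~\ref{thm:All_PP_Converge}), and the fact that the bottleneck topology on $M_p(\bR)$ is finer than the vague topology (Lemma~\ref{lem:DB-Vague}). First I would reduce the three claims to one. Since $\sF$ is strictly increasing, it preserves the total order on the $d$-faces induced by $\phip$ (and by $\phi$), so Theorem~\ref{thm:death_MSA} applied to $\Lp_{n,d}$ --- which has $\beta_{d-1}=0$ because the underlying complex $\K_n^d$ does --- gives $\PoiMt=\PoiDt$ as point measures, and likewise $\PoiM=\PoiD$ for the complex weighted by $\phi$ alone. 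Hence it suffices to treat $\PoiDt$ and $\PoiFt$. I would also note that the death-time sets of $(\K_n^d,\phi)$ and $(\K_n^d,\phip)$, and the two minimal spanning acycles, all have the common cardinality $\gamma_d(\K_n^d)=\binom{n-1}{d}$, so the matchings invoked below exist.

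Next I would build near-matchings between the perturbed and unperturbed point sets. For death times, apply Theorem~\ref{thm:l1_stability} with $p=\infty$ to the complex $\K_n^d$ carrying the two monotone weights $\phi$ and $\phip$, which agree off the $d$-faces: this produces a bijection from $\{D_i\}$ onto $\{\Dp_i\}$ that moves each point by at most $\sup_{\sigma\in\F^d}|\phi(\sigma)-\phip(\sigma)|=\|\epsilon_n\|_\infty$. For the nearest-face distances $C'(\sigma)$ I would instead use the elementary inequality $|\min_i a_i-\min_i b_i|\le\max_i|a_i-b_i|$ to get $\sup_{\sigma\in\F^{d-1}}|C'(\sigma)-C(\sigma)|\le\|\epsilon_n\|_\infty$. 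Composing each matching with the strictly increasing rescaling $t\mapsto n\sF(t)-d\log n+\log(d!)$ and invoking the Lipschitz constant $L$ of $\sF$, I obtain that the bottleneck distance (Definition~\ref{def:db_point}) between $\PoiDt$ and $\PoiD$, and between $\PoiFt$ and $\PoiF$, is at most $nL\|\epsilon_n\|_\infty$, which tends to $0$ in probability by hypothesis. Finally, combining $\PoiD,\PoiF\Rightarrow\PoiP$ (Theorem~\ref{thm:All_PP_Converge}, whose hypotheses hold since Lipschitz continuous and strictly increasing implies continuous and strictly increasing) with a Slutsky-type argument in the metric $\min\{d_B,1\}$ on $M_p(\bR)$ and then with Lemma~\ref{lem:DB-Vague}, I conclude $\PoiDt,\PoiFt\Rightarrow\PoiP$ in the vague topology; the identity $\PoiMt=\PoiDt$ then gives the remaining convergence.

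The step I expect to be the main obstacle to present cleanly is the interplay between the $n$-scaling and the stability bound: it is exactly because the rescaling map $t\mapsto n\sF(t)-d\log n+\log(d!)$ inflates distances by the factor $n$ that one needs $n\|\epsilon_n\|_\infty\to 0$ rather than merely $\|\epsilon_n\|_\infty\to 0$, and it is the $p=\infty$ (bottleneck) form of Theorem~\ref{thm:l1_stability}, not its finite-$p$ form, that is relevant here. The rest is a routine assembly --- checking that the bottleneck distance between the rescaled point sets is controlled by the bijection built from the stability matching, and that convergence in the $\min\{d_B,1\}$ metric together with convergence of the comparison process yields convergence of the target process.
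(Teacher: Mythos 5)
Your reduction of the three claims to one, the construction of the matchings (the $p=\infty$ case of Theorem~\ref{thm:l1_stability} for death times, the elementary $\min$ inequality for the nearest-face distances, and Theorem~\ref{thm:death_MSA} for the acycle weights), and the resulting bound $d_B(\PoiDt,\PoiD)\le nL\|\epsilon_n\|_{\infty}\to 0$ in probability reproduce exactly Lemma~\ref{lem:max_C_Perturbation} and the first half of the paper's proof; up to that point the two arguments coincide.

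The gap is in the final step, which you dismiss as routine. Slutsky's theorem transfers $\PoiD\Rightarrow\PoiP$ to $\PoiDt\Rightarrow\PoiP$ only if the two processes become close in probability \emph{in the metric of the topology in which the weak convergence is stated}, i.e.\ you need $d_v(\PoiDt,\PoiD)\to 0$ in probability, not $d_B(\PoiDt,\PoiD)\to 0$. You cannot instead run Slutsky in the metric $\min\{d_B,1\}$: $\PoiD$ does not converge to $\PoiP$ in the bottleneck topology (their supports have different, indeed diverging, cardinalities, so $d_B(\PoiD,\PoiP)=\infty$). And Lemma~\ref{lem:DB-Vague}, being a comparison of topologies, only guarantees that for each \emph{fixed} $m$ and $\epsilon$ there is a $\delta(m,\epsilon)$ with $d_B(m_1,m)\le\delta\Rightarrow d_v(m_1,m)<\epsilon$; here the reference measure $m=\PoiD$ is random and varies with $n$, so no single $\delta$ serves. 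What is actually required is the quantitative estimate \eqref{eqn:dv_bound} from the proof of that lemma, $d_v(m_1,m)\le 2\lambda\, m(K_\epsilon)\, d_B(m_1,m)+\epsilon/2$, combined with tightness of $\PoiD(K_\epsilon)$, which follows because $\PoiD(K_\epsilon)$ converges in distribution to the a.s.\ finite $\PoiP(K_\epsilon)$ by Theorem~\ref{thm:All_PP_Converge}. The paper carries out precisely this $\epsilon$--$\delta$--$k$ argument; without it your concluding sentence does not close the proof.
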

}
We need a comparison lemma to prove the main point process convergence result. The first inequality is obvious and the next two follow from Theorem \ref{thm:l1_stability} for $p = \infty$ and Theorem~\ref{thm:death_MSA}.
\begin{lemma}
\label{lem:max_C_Perturbation}
For fixed $n, d \geq 1,$ we have the following inequalities:
\[
\max_{\sigma \in \F^{d - 1}(\Lp_{n, d})} |C^\prime(\sigma) - C(\sigma)| \leq \|\epsilon_n\|_{\infty},
\]
\[
\inf_{\gamma} \max_{i} |\Dp_{i} - \gamma(D_{i})| \leq ||\phip - \phi||_{\infty} \leq \|\epsilon_n\|_{\infty} ,
\]
where the infimum is over all possible bijections $\gamma : \{\Dp_i\} \to \{D_i\},$ and
\[		\inf_{\gamma} \max_{i} |\phip(\sigma^\prime_i) - \gamma(\phi(\sigma_i))| \leq ||\phip - \phi||_{\infty} = \|\epsilon_n\|_{\infty},
\]
where the infimum is over all possible bijections $\gamma: \{\phip(\sigma^\prime) : \sigma^\prime \in \Mp\} \to \{\phi(\sigma) : \sigma \in M\}.$
\end{lemma}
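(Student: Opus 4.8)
The plan is to prove the three inequalities one after another, the first by an elementary argument and the other two by combining Theorem~\ref{thm:l1_stability} (at $p=\infty$) with Theorem~\ref{thm:death_MSA}.

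For the first inequality I would fix $\sigma \in \F^{d-1}(\Lp_{n,d})$ and write both $C(\sigma)$ and $C'(\sigma)$ as minima over the \emph{same} finite, nonempty index set $T_\sigma := \{\tau \in \F^d(\Lp_{n,d}) : \tau \supset \sigma\}$, namely $C(\sigma) = \min_{\tau \in T_\sigma}\phi(\tau)$ and $C'(\sigma) = \min_{\tau \in T_\sigma}\phip(\tau)$. Since $|\phip(\tau) - \phi(\tau)| = |\epsilon_n(\tau)| \le \|\epsilon_n\|_\infty$ for every $\tau$, the elementary bound $\bigl|\min_\tau a_\tau - \min_\tau b_\tau\bigr| \le \max_\tau |a_\tau - b_\tau|$ gives $|C'(\sigma) - C(\sigma)| \le \|\epsilon_n\|_\infty$, and taking the maximum over $\sigma$ closes the first line.

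For the second inequality I would first observe that $\phi$ and $\phip$ agree (both equal $0$) on every face of dimension at most $d-1$, so $\|\phip - \phi\|_\infty = \max_{\sigma \in \F^d(\Lp_{n,d})} |\epsilon_n(\sigma)| \le \|\epsilon_n\|_\infty$. Both $\phi$ and $\phip$ are monotonic on $\K_n^d$, hence induce filtrations once their partial orders are extended to total orders (Remark~\ref{rem:uniqueness}); I would fix such total orders. Applying Theorem~\ref{thm:l1_stability} with $\K = \K_n^d$, $f = \phi$, $g = \phip$, and $p = \infty$ then bounds $\inf_\gamma \max_i |\Dp_i - \gamma(D_i)|$ --- the infimum over bijections between the two $\Hg_{d-1}$ death-time multisets, which is exactly the quantity $\inf_{\pi \in \Pi_D}\sup_i|D_i - \pi(D'_i)|$ appearing there --- by $\sup_{\sigma \in \F^d}|\phi(\sigma) - \phip(\sigma)| = \|\phip - \phi\|_\infty \le \|\epsilon_n\|_\infty$.

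Finally, for the third inequality I would invoke Theorem~\ref{thm:death_MSA}: since $\beta_{d-1}(\K_n^d) = 0$ (the $d$-skeleton of a simplex is $(d-1)$-acyclic), both minimal spanning acycles $M$ (for $\cL_{n,d}$) and $\Mp$ (for $\Lp_{n,d}$) exist, and $\{D_i\} = \{\phi(\sigma) : \sigma \in M\}$, $\{\Dp_i\} = \{\phip(\sigma') : \sigma' \in \Mp\}$ as multisets. Thus a bijection from $\{\phip(\sigma') : \sigma' \in \Mp\}$ to $\{\phi(\sigma) : \sigma \in M\}$ is literally a bijection from $\{\Dp_i\}$ to $\{D_i\}$, so the third line coincides with the second and is again $\le \|\epsilon_n\|_\infty$. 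The only real bookkeeping to watch --- and the closest thing to an obstacle --- is keeping a single consistent choice of total orders for $\phi$ and $\phip$ throughout, so that Remark~\ref{rem:uniqueness}, Theorem~\ref{thm:l1_stability}, and Theorem~\ref{thm:death_MSA} all refer to the same minimal spanning acycles and the same death-time multisets; once that is pinned down, every step is immediate from the quoted results.
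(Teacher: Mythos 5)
Your proposal is correct and follows exactly the route the paper intends: the paper itself dispenses with the lemma in one sentence (``the first inequality is straightforward and the next two follow from Theorem \ref{thm:l1_stability} for $p=\infty$ and Theorem \ref{thm:death_MSA}''), and your argument simply fills in those details — the $|\min_\tau a_\tau - \min_\tau b_\tau| \le \max_\tau|a_\tau - b_\tau|$ bound over the common index set of cofaces, the $L_\infty$ stability applied to $f=\phi$, $g=\phip$, and the identification of death times with minimal-spanning-acycle weights. Your closing remark about fixing consistent total orders is a sensible precaution and consistent with Remark \ref{rem:uniqueness}.
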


\begin{proof}[Proof of Theorem \ref{thm:perturbed_process_convergence}.]
We only show that $\PoiDt \Rightarrow \PoiP$ as $n \to \infty$ using Lemma~\ref{lem:max_C_Perturbation}, as the other results follow similarly. Let $d_v$ be the vague metric given in \eqref{eqn:vague_metric}. Suppose we show that $d_{v}(\PoiDt, \PoiD) \to 0$ in probability as $n \to \infty.$ Then, since $(M_p(\Real),d_{v})$ is a Polish space (see Appendix \ref{sec:app}), we can apply Slutsky's theorem (\cite[Chapter 3, Corollary 3.3]{ethier2009markov}) and Proposition~\ref{thm:All_PP_Converge} to derive that $\PoiDt \Rightarrow \PoiP$ as desired. It thus suffices to prove that $d_{v}(\PoiDt, \PoiD) \to 0$ in probability as $n \to \infty.$

Let $d_B$ be as in Definition~\ref{def:db_point}. Then by Lemma \ref{lem:max_C_Perturbation}, we have that
\[
d_B(\PoiDt, \PoiD) \leq \zeta n \|\epsilon_{n}\|_{\infty},
\]
where we have assumed that the Lipschitz constant  associated with $\sF$ is $\zeta.$ Now, by assumption, $d_B(\PoiDt, \PoiD) \to 0$ in probability. Fix $\epsilon \in (0,1)$ and choose $\delta = \frac{\epsilon}{k}$ for some $k \geq 1.$ Then, we have
\begin{eqnarray}
\Pr\{d_v(\PoiDt, \PoiD) > \epsilon\} & \leq & \Pr\{d_v(\PoiDt, \PoiD) > \epsilon, d_B(\PoiDt, \PoiD)  \leq \delta\} \nonumber \\
&  & + \, \, \Pr\{d_v(\PoiDt, \PoiD) > \epsilon, d_B(\PoiDt, \PoiD) > \delta\}  \nonumber \\
& \leq & \Pr\{2\lambda_\epsilon\PoiD(K_{\epsilon})\delta > \epsilon/2\} + \, \Pr\{d_B(\PoiDt, \PoiD) > \delta\},  \nonumber \\
& \leq &  \Pr\left\{\PoiD(K_{\epsilon}) > \frac{k}{4\lambda_\epsilon}\right\} + \, \Pr\{d_B(\PoiDt, \PoiD) > \delta\}. \label{eqn:dv_convg1}
\end{eqnarray}
For the second inequality, we have used \eqref{eqn:dv_bound} with $\lambda_\epsilon$ and the compact set $K_{\epsilon}$ as given there. Since the second term in \eqref{eqn:dv_convg1} converges to $0$ as $n \to \infty$, it follows using Proposition~\ref{thm:All_PP_Converge} that
\[
\limsup_{n \to \infty} \Pr\{d_v(\PoiDt, \PoiD) > \epsilon\} \leq \lim_{n \to \infty}\Pr\left\{\PoiD(K_{\epsilon}) > \frac{k}{4\lambda_\epsilon}\right\} = \Pr\left\{\PoiP(K_{\epsilon}) > \frac{k}{4\lambda_\epsilon}\right\}.
\]
Now letting $k \to \infty$, we have that $d_v(\PoiDt, \PoiD) \to 0$ in probability as desired.
\end{proof}
Except for a few trivial cases, determining the distribution of the maximum $\|\epsilon_n\|_{\infty}$ is not easy and hence we give two simple corollaries to verify the bounds.
\begin{corollary}
\label{cor:Simple_Cond}
For each $n,$ let $\{\psi(\sigma) : \sigma \in \F^d(\Lp_{n, d})\}$ have the same distribution as the real valued random variable $\psi$ which, for some $s > 0$, satisfies $\Exp[e^{s|\psi|}] < \infty$. Define $\epsilon_n(\sigma) = a_n^{-1} \psi(\sigma)$ where $a_n$ is a sequence such that\footnote{Here $w$ is the small omega notation.} $a_n = \omega(n\log n)$. If $\sF$ is \gt{Lipschitz continuous}, then, each of $\PoiFt, \PoiDt,$ and $\PoiMt$ converges in distribution to $\PoiP$.
\end{corollary}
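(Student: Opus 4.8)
The plan is to reduce Corollary~\ref{cor:Simple_Cond} to Theorem~\ref{thm:perturbed_process_convergence}; by that theorem it suffices to verify that $n\|\epsilon_n\|_{\infty}\to 0$ in probability, where $\|\epsilon_n\|_{\infty} = \max_{\sigma\in\F^d}|\epsilon_n(\sigma)| = a_n^{-1}\max_{\sigma}|\psi(\sigma)|$. So the whole task is a tail bound on the maximum of $\binom{n}{d+1}$ identically distributed (but possibly dependent) copies of $\psi$.

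First I would fix $\epsilon>0$ and write, using a union bound and then the exponential Markov inequality with the given parameter $s>0$,
\begin{equation}
\Pr\!\left\{ n\|\epsilon_n\|_{\infty} > \epsilon \right\}
= \Pr\!\left\{ \max_{\sigma}|\psi(\sigma)| > \frac{\epsilon a_n}{n} \right\}
\le \binom{n}{d+1}\,\Pr\!\left\{ |\psi| > \frac{\epsilon a_n}{n} \right\}
\le \binom{n}{d+1}\, e^{-s\epsilon a_n/n}\,\Exp\!\left[e^{s|\psi|}\right].
\end{equation}
Here the only place dependence between the $\epsilon_n(\sigma)$'s could have hurt us is the union bound, and that step does not care about dependence; the subsequent step uses only the marginal law of $\psi$, which is the same for each $\sigma$. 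Since $\binom{n}{d+1} = O(n^{d+1})$ and $\Exp[e^{s|\psi|}]$ is a finite constant (independent of $n$), the right-hand side is bounded above by $C\,n^{d+1} e^{-s\epsilon a_n/n}$ for a constant $C = C(d,s,\psi)$.

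It remains to observe that this bound tends to $0$. Taking logarithms, $\log\bigl(C n^{d+1} e^{-s\epsilon a_n/n}\bigr) = \log C + (d+1)\log n - s\epsilon\, a_n/n$, and the hypothesis $a_n = \omega(n\log n)$ means $a_n/n = \omega(\log n)$, so $s\epsilon\, a_n/n$ dominates $(d+1)\log n$ and the whole expression goes to $-\infty$. Hence $\Pr\{n\|\epsilon_n\|_{\infty}>\epsilon\}\to 0$ for every $\epsilon>0$, i.e.\ $n\|\epsilon_n\|_{\infty}\to 0$ in probability. Applying Theorem~\ref{thm:perturbed_process_convergence} (whose remaining hypotheses — $\sF$ Lipschitz continuous and strictly increasing — are exactly those assumed here) gives that $\PoiFt,\PoiDt,\PoiMt$ converge in distribution to $\PoiP$, completing the proof.

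I do not expect a genuine obstacle here: the only subtlety is making sure the argument is honest about dependence, which it is, since dependence is only ever dodged via a union bound. The sub-exponential moment assumption $\Exp[e^{s|\psi|}]<\infty$ is precisely what lets a union bound over polynomially many terms be killed by the super-$n\log n$ growth of $a_n$; a weaker (e.g.\ polynomial) moment assumption would force a correspondingly larger growth rate on $a_n$, so the hypotheses are well matched and nothing more delicate is needed.
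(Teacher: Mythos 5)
Your proof is correct. The paper verifies the same hypothesis of Theorem~\ref{thm:perturbed_process_convergence}, namely that $n\|\epsilon_n\|_{\infty}\to 0$ in probability, but by a slightly different computation: it bounds the expectation of the maximum via Jensen's inequality and the log-sum-exp trick, $s\,\Exp[\max_{\sigma}|\psi(\sigma)|]\le \log \Exp[\max_{\sigma}e^{s|\psi(\sigma)|}]\le \log\bigl(n^{d+1}\Exp[e^{s|\psi|}]\bigr)$, so that $\Exp[\|\epsilon_n\|_{\infty}]=O(\log n/a_n)$ and hence $n\|\epsilon_n\|_{\infty}\to 0$ in $L^1$, and so in probability, by $a_n=\omega(n\log n)$. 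You instead prove the tail bound directly via a union bound followed by the exponential Chebyshev inequality. The two arguments have the same length and use exactly the same ingredients (the marginal exponential moment, the $O(n^{d+1})$ count of $d$-faces, and the growth rate of $a_n$), and both are equally indifferent to dependence among the $\psi(\sigma)$'s: the paper's step $\Exp[\max_{\sigma}e^{s|\psi(\sigma)|}]\le n^{d+1}\Exp[e^{s|\psi|}]$ plays precisely the role of your union bound. If anything your route is marginally more self-contained, since the paper's expectation bound still requires an (unstated) application of Markov's inequality to convert the $L^1$ statement into convergence in probability, whereas you land on the probability statement directly.
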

\begin{proof}
Using Jensen's inequality for the second inequality below and since $|\F^d(\Lp_{n, d})| \leq n^{d + 1},$
\begin{eqnarray*}
s\Exp[\|\epsilon_n\|_{\infty}] & = & \frac{1}{a_{n}} \log e^{s \Exp \left[ \max_{\sigma \in \F^{d}(\Lp_{n, d})} |\psi(\sigma)|\right]}
\leq  \frac{1}{a_{n}} \log \Exp \left[e^{s \max_{\sigma \in \F^{d}(\Lp_{n, d})} |\psi(\sigma)|]}\right]\\
& = & \frac{1}{a_{n}} \log \Exp \left[\max_{\sigma \in \F^{d}(\Lp_{n, d})} e^{s |\psi(\sigma)|}\right] \leq \frac{1}{a_{n}} \log( n^{d + 1} \mathbb{E}[e^{s|\psi|}]).
\end{eqnarray*}
Hence, $n ||\epsilon_{n}||_{\infty} \to 0$ in probability as $n \to \infty.$ The result now follows from Theorem~\ref{thm:perturbed_process_convergence}.
\end{proof}
The following corollary follows from Theorem~\ref{thm:perturbed_process_convergence} using Markov's inequality and $\|\epsilon_n\|_{\infty}  \leq \|\epsilon_n\|_1$.
\begin{corollary}
\label{cor:Simple_Condition2}
For each $n,$ let $\{\epsilon_n(\sigma): \sigma \in \F^d(\Lp_{n, d})\}$ be identically distributed random variables with \gt{$\EP|\epsilon_n(\sigma)| = o(n^{- d - 2})$} for each $\sigma.$ If $\sF$ is \gt{Lipschitz continuous}, then each of $\PoiFt, \PoiDt,$ and $\PoiMt$ converges in distribution to $\PoiP$.
\end{corollary}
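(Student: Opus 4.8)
The plan is to reduce everything to Theorem~\ref{thm:perturbed_process_convergence} by verifying its single hypothesis, namely that $n\|\epsilon_n\|_{\infty} \to 0$ in probability. Since $\sF$ is already assumed Lipschitz continuous and strictly increasing, no further structural work on the point processes is needed; the entire content of the corollary is a quantitative estimate on the maximal perturbation $\|\epsilon_n\|_{\infty} = \max_{\sigma \in \F^d(\Lp_{n,d})} |\epsilon_n(\sigma)|$.

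First I would record that the number of $d$-faces of $\Lp_{n,d}$ is $f_d = \binom{n}{d+1} \leq n^{d+1}$. Assuming (as is implicit in the statement, and consistent with the heuristic given in the introduction for the Frieze and Hiraoka--Shirai results) that the $\epsilon_n(\sigma)$ are non-negative --- otherwise one runs the identical argument with $|\epsilon_n(\sigma)|$ in place of $\epsilon_n(\sigma)$, the only caveat being that the hypothesis then must be read as a bound on $\EP[|\epsilon_n(\sigma)|]$ --- I would apply Markov's inequality to each individual perturbation: for any fixed $t > 0$,
\[
\Pr\{\epsilon_n(\sigma) > t/n\} \leq \frac{n\,\EP[\epsilon_n(\sigma)]}{t}.
\]
A union bound over the at most $n^{d+1}$ faces, together with the assumption that the $\epsilon_n(\sigma)$ are identically distributed (so $\EP[\epsilon_n(\sigma)]$ does not depend on $\sigma$), then gives
\[
\Pr\{n\|\epsilon_n\|_{\infty} > t\} \leq n^{d+1}\cdot \frac{n\,\EP[\epsilon_n(\sigma)]}{t} = \frac{n^{d+2}\,\EP[\epsilon_n(\sigma)]}{t}.
\]
By hypothesis $\EP[\epsilon_n(\sigma)] = o(n^{-d-2})$, so the right-hand side is $o(1)/t \to 0$ as $n \to \infty$ for each fixed $t$; this is precisely the statement that $n\|\epsilon_n\|_{\infty} \to 0$ in probability.

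With this in hand, Theorem~\ref{thm:perturbed_process_convergence} applies verbatim and yields that $\PoiFt, \PoiDt,$ and $\PoiMt$ each converge in distribution to $\PoiP$, completing the proof. I do not expect a genuine obstacle here: the only thing requiring care is matching the exponents (the extra factor $n$ from the scaling, the factor $n^{d+1}$ bounding the number of faces, against the $o(n^{-d-2})$ decay of the common mean), and the brief remark about passing to absolute values since the hypothesis controls $\EP[\epsilon_n(\sigma)]$ rather than $\EP[|\epsilon_n(\sigma)|]$.
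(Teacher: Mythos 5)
Your proof is correct and is essentially the paper's argument: the paper bounds $\|\epsilon_n\|_{\infty}\leq\|\epsilon_n\|_1$ and applies Markov's inequality to $n\|\epsilon_n\|_1$, which after summing the $O(n^{d+1})$ identical expectations is exactly your union bound plus termwise Markov, with the same exponent bookkeeping against the $o(n^{-d-2})$ decay. Your side remark that the hypothesis should really be read as a bound on $\EP[|\epsilon_n(\sigma)|]$ is a fair observation about the statement, but it does not change the argument.
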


%

In relation to $\epsilon_n(\sigma)$'s from Definition~\ref{defn:Generically_Perturbed_Weighted}, let $\|\epsilon_n\|_p := (\sum_{\sigma \in \F^d(\Lp_{n, d})} |\epsilon_n(\sigma)|^p)^{1/p}$ for $p \in \{1,2,\ldots\}.$

\begin{proof}[Proof of Corollary \ref{thm:lifetimesum}]
Fix a $p \in \{1,2,\ldots,\}.$ Let $\pi$ be a bijection from $\{D_i\}$ to $\{D'_i\}$ achieving the infimum in Theorem \ref{thm:l1_stability}. Due to the finiteness of the complex, such a bijection exists. Now, we derive from mean-value theorem, H\"{o}lder's inequality and our stability result (Theorem \ref{thm:l1_stability})  that
\begin{eqnarray*}
|L^p_{n,d-1} - (L')^p_{n,d-1}| & \leq & p\sum_i (D_i^{p-1} + \pi(D_i)^{p-1})|D_i - \pi(D_i)|  \\
& \leq &  p(\sum_i|D_i - \pi(D_i)|^p)^{1/p}\left[(\sum_i (D'_i)^p)^{(p-1)/p} + (\sum_i D_i^p)^{(p-1)/p}\right] \\
& \leq & p \|\epsilon_n\|_p \left[(\sum_i (D'_i)^p)^{(p-1)/p} + (\sum_i D_i^p)^{(p-1)/p}\right]
\end{eqnarray*}
Now taking expectations and again using H\"{o}lder's inequality, we obtain that
\begin{eqnarray}
\Exp[|L^p_{n,d-1} - (L')^p_{n,d-1}|] & \leq & p \Exp[\|\epsilon_n\|^p_p]^{1/p} \left( \Exp[\sum_i (D'_i)^p)]^{(p-1)/p} + \Exp[\sum_i D_i^p]^{(p-1)/p}\right) \nonumber  \\
& \leq  & p \Exp[\|\epsilon_n\|^p_p]^{1/p} \left( \Exp[L^p_{n,d-1}]^{(p-1)/p} +  \Exp[(L')^p_{n,d-1}]^{(p-1)/p}  \right) \label{eqn:lifetime_bd1}
\end{eqnarray}
By the decay bounds on $\epsilon_n(\sigma)$'s, we can derive that $n^{-(d-p)}\Exp[\|\epsilon_n\|^p_p] \to 0$. From this convergence and \eqref{eqn:hino}, our proof is complete if
we show boundedness of $n^{-(d-p)} \Exp[(L')^p_{n,d-1}]$.  Now, using our stability result (Theorem \ref{thm:l1_stability}), we obtain that
\[ \Exp[(L')^p_{n,d-1}] \leq 2^{p-1} \left( \Exp[L^p_{n,d-1}]  +  \Exp[\|\epsilon_n\|^p_p] \right),\]
and thus the required boundedness follows from convergence of  $n^{-(d-p)}\Exp[\|\epsilon_n\|^p_p]$ and \eqref{eqn:hino}.	
\end{proof}

\remove{		
\section{Discussion}
\label{sec:disc}
In this paper, we studied the extremal values of minimal spanning acycles in generically weighted $d-$complexes with perturbations (noisy weighted analogue of the Linial-Meshulam model),  extending results for minimal spanning trees in uniformly weighted graphs (weighted analogue of the Erd\H{o}s-R\`enyi graph). While Erd\H{o}s-R\`enyi graphs are the simplest models, they represent a \emph{mean field model}  for pairwise interaction.  Likewise, the model we consider can be considered a mean field model for higher order interactions, represented by the higher dimensional simplices. There remain several natural  questions and directions to pursue. Some of these are mentioned in \cite[Section 7]{hiraoka2015minimum}.

The first is to consider more complicated models. The next natural candidate is the Erd\H{o}s-R\`enyi clique complex. This complex builds an Erd\H{o}s-R\`enyi graph, then fills in all possible higher dimensional simplices (which are determined by the cliques of the graph - this is also called a random clique complex). These can be used to study the structure of higher dimensional interactions arising from pairwise interactions and have been studied in \cite{kahle2009topology}. We believe the techniques presented in this paper combined with the approach in \cite{kahle2014inside}, can be readily applied to extend our results to this type of model. It does however, require a technical reworking of the proofs. In general, our results shall help one to easily establish limit theorems for a ``noisy'' version of any random complex model once it has been established for the random complex model without noise. For example, in \cite[Theorem 6.10]{hiraoka2015minimum}, an upper and lower bound for the expected lifetime was shown. It is possible to extend these bounds to a suitable noisy version of the random clique complexes.  Other natural models are spatial models (e.g. Poisson point processes), which would require different techniques 

There are also other interesting questions in the study of  the minimal spanning acycles. For example, does there exist a higher dimensional analogue of Wilson's algorithm which returns the minimal spanning acycle. An attempt towards the same may be found in \cite{Parsons15}. \red{ Furthermore, in this paper we prove a stability result on the birth and death times -- in an upcoming work we will show that this  result be extended beyond the 1-norm for lifetimes.} \red{\bf IS THIS NEEDED ?}		
}		

\section*{Acknowledgements}
This research was supported through the program ``Research in Pairs" by the Mathematisches Forschungsinstitut Oberwolfach in 2015. The authors would like to thank the referee for detailed reading and numerous comments leading to an improved exposition.
\appendix	
\label{sec:app}

\section{Convergence of point processes}
\label{app:pp}

We discuss here convergence of point processes under vague topology. For notations and definitions, see Subsection~\ref{sec:topology}. Firstly, it is known that $(M_p(\Real), \M_p(\Real))$ is metrizable as a complete, separable metric space (\cite[Chapter 3, Proposition 3.17]{resnick2013extreme}), i.e., it is a Polish space. In the proof of this proposition, the vague metric $d_v$ has been used which we describe next.

Let $\{G_i\}$ be the collection of open intervals in $\Real$ with rational end points and let $\{h_j\}$ be suitable piece-wise linear approximations to the indicator function of these sets. The functions $h_j$ are chosen so that they lie in $\ccR$ and are Lipschitz continuous (while it is not explicitly highlighted, from the definition of $h_j$ in the proofs of Propositions 3.11, 3.17 from \cite[Chapter 3]{resnick2013extreme}, one can check that it is Lipschitz). Then for $m_1, m_2 \in M_p(\Real),$
%
%
%
\begin{equation}
\label{eqn:vague_metric}
d_v(m_1, m_2) := \sum_{j = 1}^{\infty} \frac{1 - \exp\{- |m_1(h_j) - m_2(h_j)|\}}{2^j}
\leq \sum_{j = 1}^{\infty} \frac{ \min \{|m_1(h_j) - m_2(h_j)|, 1\}}{2^j}.
\end{equation}

The following is an oft-used result to prove weak convergence of point processes.
\begin{lemma}\cite[Proposition 3.22]{resnick2013extreme}
\label{lem:KallenbergResult}
Let $\{\sP_n\}, \sP$ be point processes on $\bR$ with $\sP$ being simple. Let $\IR$ be the collection of all finite union of intervals in $\bR.$ Suppose that for each $I \in \IR,$ with 		$\Pr\{\sP(\partial I) = 0\} = 1$, we have
\[
\lim_{n \to \infty} \Pr\{\sP_{n}(I) = 0\} = \Pr\{\sP(I) = 0\} \, \, \mbox{and} \, \,
\lim_{n \to \infty} \EP[\sP_{n}(I)] = \EP[\sP(I)] < \infty.
\]
Then $\sP_{n} \Rightarrow \sP$ in $M_{p}(\bR).$
\end{lemma}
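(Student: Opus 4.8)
The plan is to combine a tightness (relative compactness) argument for $\{\sP_n\}$ with an identification of all subsequential weak limits via the avoidance (void-probability) function, crucially exploiting that the target $\sP$ is simple. First I would establish that $\{\sP_n\}$ is tight in $M_p(\bR)$. Recall that a subset of $M_p(\bR)$ is relatively compact iff it assigns uniformly bounded mass to every compact set; hence it suffices to show that for every compact $K \subset \bR$ and every $\eps > 0$ there is $\lambda < \infty$ with $\sup_n \Pr\{\sP_n(K) > \lambda\} < \eps$. Given $K$, enlarge it to a bounded interval $I \supset K$ whose endpoints avoid the (at most countable) set of atoms of the intensity measure $A \mapsto \Exp[\sP(A)]$, so that $\Pr\{\sP(\partial I) = 0\} = 1$; then the hypothesis gives $\Exp[\sP_n(I)] \to \Exp[\sP(I)] < \infty$, so $c := \sup_n \Exp[\sP_n(I)] < \infty$, and Markov's inequality yields $\Pr\{\sP_n(K) > \lambda\} \le \Pr\{\sP_n(I) > \lambda\} \le c/\lambda$, which is $< \eps$ for $\lambda > c/\eps$.

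Next I would identify every subsequential weak limit. By tightness, along any subsequence there is a further subsequence with $\sP_{n_k} \Rightarrow \sP'$ for some point process $\sP'$. Fix a finite union of bounded intervals $I$ that is simultaneously a continuity set for $\sP$ and for $\sP'$, i.e.\ $\Pr\{\sP(\partial I) = 0\} = \Pr\{\sP'(\partial I) = 0\} = 1$; as before only countably many endpoint choices are excluded, so such $I$ are abundant. The evaluation map $m \mapsto m(I)$ on $M_p(\bR)$ is continuous at every $m$ with $m(\partial I) = 0$, so the mapping theorem gives $\sP_{n_k}(I) \Rightarrow \sP'(I)$ as $\bZ_{\ge 0}$-valued random variables; since convergence in distribution of integer-valued random variables forces convergence of each point mass, $\Pr\{\sP_{n_k}(I) = 0\} \to \Pr\{\sP'(I) = 0\}$. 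Comparing with the hypothesis $\Pr\{\sP_{n_k}(I) = 0\} \to \Pr\{\sP(I) = 0\}$ we obtain $\Pr\{\sP'(I) = 0\} = \Pr\{\sP(I) = 0\}$ for all such $I$.

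Finally I would invoke the avoidance-function uniqueness theorem for simple point processes (R\'enyi's theorem): the collection of finite unions of bounded intervals with continuity-set endpoints forms a dissecting ring generating $\borR$, and a simple point process is determined on such a class by its void probabilities; hence equality of $I \mapsto \Pr\{\cdot(I) = 0\}$ on this class together with simplicity of $\sP$ forces $\sP'$ to be simple and $\sP' \stackrel{d}{=} \sP$. Thus every subsequential limit of $\{\sP_n\}$ has the law of $\sP$, and together with tightness this yields $\sP_n \Rightarrow \sP$ in $M_p(\bR)$.

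The main obstacle I anticipate is the bookkeeping around continuity sets: verifying that sufficiently many finite unions of intervals are continuity sets for both $\sP$ and each candidate subsequential limit, that these sets form a class to which the R\'enyi/avoidance-function uniqueness theorem legitimately applies, and that the exceptional endpoint sets are genuinely countable (this uses that the intensity measure $\Exp[\sP(\cdot)]$, being the Radon limit of $\Exp[\sP_n(\cdot)]$ on continuity sets, has at most countably many atoms). By comparison the tightness step and the mapping-theorem step are routine.
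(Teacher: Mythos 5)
The paper does not prove this lemma; it is quoted verbatim from \cite[Proposition 3.22]{resnick2013extreme} (Kallenberg's theorem), so your proposal is judged on its own merits as a reconstruction of that standard argument. Your overall architecture (tightness via Markov's inequality and the expectation hypothesis, identification of subsequential limits through void probabilities on a dissecting ring of continuity intervals, R\'enyi's avoidance-function uniqueness theorem) is the right one, and the tightness and mapping-theorem steps are sound.

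However, there is a genuine gap in the final identification step. You assert that equality of the avoidance functions of $\sP'$ and $\sP$ on the dissecting class, ``together with simplicity of $\sP$, forces $\sP'$ to be simple and $\sP' \stackrel{d}{=} \sP$.'' This implication is false: void probabilities are blind to multiplicities, so for instance the process $\sP'$ obtained by doubling every point of $\sP$ has exactly the same avoidance function as $\sP$, yet is neither simple nor equal in law to $\sP$. R\'enyi's theorem only identifies the \emph{support} process $\sP'^{*}$ (the simple point process obtained from $\sP'$ by reducing all multiplicities to one) with $\sP$. To rule out multiplicity in the limit you must invoke the expectation hypothesis a second time, not merely for tightness: by the mapping theorem and Fatou's lemma for weak convergence, $\Exp[\sP'(I)] \le \liminf_k \Exp[\sP_{n_k}(I)] = \Exp[\sP(I)]$ for each continuity interval $I$, while R\'enyi gives $\Exp[\sP'^{*}(I)] = \Exp[\sP(I)]$; since $\sP'^{*} \le \sP'$ pointwise, $\Exp[\sP'(I) - \sP'^{*}(I)] = 0$, hence $\sP' = \sP'^{*}$ a.s.\ on $I$, and exhausting $\bR$ by such intervals yields $\sP' \stackrel{d}{=} \sP$. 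This is precisely why Kallenberg's theorem needs both the void-probability and the expectation conditions; as written, your argument would ``prove'' the lemma without ever using the expectation convergence except for tightness, which cannot be correct.
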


We now prove a lemma that will be useful when combining results from computational topology (which uses bottleneck distance) and point process theory (vague topology).
\begin{lemma}
\label{lem:DB-Vague}
The topology of bottleneck distance is stronger than that of vague topology on $M_p(\bR)$. In particular, for every $\epsilon > 0,$ there exists a constant $\lambda_\epsilon > 0$ and a compact set $K_{\epsilon}$ such that, whenever $d_B(m_1, m) \leq 1/2,$ we have
\be
\label{eqn:dv_bound}
d_v(m_1,m) \leq 2\lambda_\epsilon m(K_{\epsilon}) d_B(m_1, m) + \frac{\epsilon}{2}.
\ee
\end{lemma}
\begin{proof}
We first establish \eqref{eqn:dv_bound}. Let $\epsilon > 0$ be arbitrary. For any $m, m_1 \in M_p(\bR),$ it follows from \eqref{eqn:vague_metric} that we can choose $k$ (independent of $m, m_1$) such that:%
\be
\label{eqn:vagueMetricBd}
d_v(m_1,m) \leq \sum_{j=1}^k|m_1(h_j)-m(h_j)| + \frac{\epsilon}{2}.
\ee
Let $K_j$ be the compact support of $h_j$ and $\lambda_j,$ the associated Lipschitz constant. Set $\lambda_\epsilon = \sum_{i = 1}^{k} \lambda_j$ and $K_\epsilon = \cup_{j=1}^k K_j^1,$ where $K^{\rho} := \{x \in \Real: \exists y \in K \text{ s.t. } |x - y| \leq \rho\}.$

Let $m, m_1$ be such that $\delta := 2d_B(m_1, m) \leq 1.$ Let $\gamma : supp(m) \to supp(m_1)$ be the bijection such that $\max_{x \in \supp(m)}|x -\gamma(x)| \leq \delta$. Also, let $M = supp(m), M_1 = supp(m_1)$.

By the definition of Bottleneck distance, we have that, for any compact set $K,$
\be
\label{eqn:m1_m_Bd}
m_1(K) \leq m(K^{\delta}) \leq m_1(K^{2\delta}).
\ee
Fix a $j \in \{1, \ldots, k\}.$ By the definition of $m(h_j),$
\begin{eqnarray*}
|m_1(h_j) - m(h_j)|
&  =  &  |\sum_{x \in M}h_j(x) - \sum_{x \in M_1}h_j(x)|  = |\sum_{x \in M} [h_j(x) - h_j(\gamma(x))]| \\
& \leq & \lambda_j [m(K_j) + m_1(K_j)] \delta \leq 2\lambda_j m(K_j^1)\delta \leq 2 \lambda_j m(K_\epsilon) \delta,
\end{eqnarray*}
where in the last inequality we have used \eqref{eqn:m1_m_Bd} and the fact that $\delta \leq 1$. Substituting the above relation in \eqref{eqn:vagueMetricBd}, we get
\[
d_v(m_1,m) \leq 2\lambda_\epsilon m(K_{\epsilon}) \delta + \frac{\epsilon}{2},
\]
as desired.

From this, it follows that for every $m \in M_p(\bR)$ and $\epsilon > 0,$ there exists $\rho$ (depending on $m$ and $\epsilon$) such that $d_B(m_1, m) \leq \rho$ implies $d_v(m_1, m) \leq \epsilon,$ which completes the proof.
\end{proof}
\remove{
\section{Matrix theoretic perspective}
\label{sec:matrix}

Since we are concerned only with field coefficients, all the groups in consideration are vector spaces as mentioned and the homomorphisms are all linear maps. This naturally calls for rephrasing the above notions in the language of matrices which shall perhaps make it more tractable for many readers. Such a viewpoint shall also bring in obvious advantages in understanding minimal spanning acycles. For simplicity, we shall stick to $\bF =\bZ_2$ in this subsection.

We denote the $f_{d-1} \times f_d$ dimensional matrix corresponding to the boundary operator $\partial_d$ by $I_d = (I_d(ij))_{1 \leq i \leq f_{d-1},1 \leq j \leq f_d}.$ The columns in this matrix correspond to the $d-$faces and rows to the $d - 1$ faces. Then the matrix entry $I_d(ij) = 1$ if the $i-$th $(d-1)$-face is in the boundary of $j-$th $d-$face else $I_d(ij) = 0$. From the discussions in Section~\ref{sec:topology}, we get
$$\beta(Z_d) = nullity(\bdr_d), \, \, \beta(B_d) = rank(\bdr_{d + 1}), \, \, \beta_d(\K) = nullity(\bdr_d) - rank(\bdr_{d + 1}).$$
With $d-$simplices considered as columns of the matrix $I_d$, the columns corresponding to the $d-$faces in a $d-$acycle $S$ (i.e., $\beta_d(\K^{d-1} \cup S) = 0$) are linearly independent. Thus, when a $d-$spanning acycle $S$ exists, the columns corresponding to the $d-$faces in $S$ form a basis for the column space of $I_d$. In other words, a spanning acycle is nothing but the columns corresponding to a basis for the column space of $I_d$. Due to this discussion on acycles as linearly independent columns of the boundary matrix, it follows that the collection of acycles of a simplicial complex is a {\em matroid} (see \cite{Oxley03,Welsh76}). Since we shall not exploit this connection in this paper, we do not want to elaborate further on this. 
}
\remove{
\section{Proof of Correctness for Kruskal's Algorithm}\label{app:combinatorial_proof}

\begin{proof}[Proof of Theorem \ref{lem:SimplicialKruskalOutcome}]
We shall assume that the weight function $w$ is injective. For the general case, similar arguments can be carried out by using Remarks  \ref{rem:uniqueness} and \ref{rem:uniqueness_msa}. From Lemmas \ref{lem:existence} and \ref{lem:uniqueness}, it follows that there is a unique minimal spanning acycle which we denote by $M_1.$

We now show that $M$ is a spanning acycle. Clearly, $\beta_{d}(\K^{d-1}) = 0$ and by our algorithm and Lemma \ref{lem:delfinado}, it remains the same at every stage of the algorithm and so $\beta_{d}(\K^{d-1} \cup M) = 0$, proving that $M$ is an acycle. Clearly, each face in $\F^d \setminus M$ is positive with respect to $\K^{d-1} \cup M$. Hence, $M$ is spanning as using Lemma \ref{lem:+ve_faces} we have that
$$\beta_{d-1}(\K^{d-1} \cup M) = \beta_{d-1}(\K^{d-1} \cup M \cup \F^d \setminus M) = \beta_{d-1}(\K) = 0,$$

For the proof of minimality, we argue as in the Kruskal's algorithm for minimal spanning tree. We prove that at any stage of the algorithm, $S \subseteq M_1.$ Assuming that the above claim is true, $M \subseteq M_1$. Since $M$ and $M_1$ are both spanning acycles, $M_1 =  M$.

We shall prove the claim inductively. Trivially, this is true for $S = \emptyset$. Suppose that the claim holds for $S$ at some stage of the algorithm i.e., $S \subset M_1$ but $S \subsetneq M.$ This implies that there does exist a  $d-$face in $\F^d \setminus S$ which is negative w.r.t. $\K^{d - 1} \cup S$ and hence, from Lemma~\ref{lem:+ve_SA}, $S \neq M_1.$ Let $\sigma$ be the next face that is added to $S$ and suppose that $\sigma \notin M_1.$ Clearly $\beta_d(\K^{d - 1} \cup M_1 \cup \sigma) = 1.$ Hence, there exists a $d-$cycle\footnote{\gt{$\sC$ represents the support of the $d-$cycle but not the $d-$cycle itself}.} $\sC$ in $\K^{d - 1} \cup M_1 \cup \sigma$ containing $\sigma.$ Since $\beta_d(\K^{d - 1} \cup S \cup \sigma) = 0,$ $\sC \not\subseteq S \cup \sigma$ and so there exists $\sigma_1 \in \sC \cap M_1 \setminus S.$ Clearly, either $w(\sigma) < w(\sigma_1)$ or $w(\sigma) > w(\sigma_1)$ as $w$ is injective. Suppose that $w(\sigma) < w(\sigma_1).$ By the exchange property of matroids, it follows that $M_1 \cup \sigma \setminus \sigma_1$ is spanning acycle with $w(M_1 \cup \sigma \setminus \sigma_1) < w(M_1),$ a contradiction. Suppose that $w(\sigma) > w(\sigma_1).$ Since $S \subsetneq M_1,$ $\sigma_1 \in M_1 \setminus S,$ and $M_1$ is a spanning acycle, it follows from Lemma \ref{lem:+ve_faces} that $\sigma_1$ is negative w.r.t. $\K^{d - 1} \cup S.$ Thus, it follows that the algorithm would have chosen $\sigma_1$ before $\sigma,$ a contradiction. The desired claim now follows. 
\end{proof}
}

\section{Method of Factorial Moments}
\label{app:Method.Of.Factorial.Moments}
\gt{Here, we provide a brief motivation for the method of factorial moments. First, this is very closely related to the method of moments and both these methods are useful when the goal is to establish convergence in distribution. Formally, suppose a random variable $X$ is such that its distribution is completely characterised by its moments $\{\Exp[X^k]: k \geq 1\}.$ Since polynomials are dense in the class of continuous functions, note that the above statement is true for a  broad class of random variables, including the Poisson random variable. A standard result in probability theory then states that if all the moments of a sequence of random variables converge to that of $X,$ then the sequence itself converges in distribution to $X.$ Keeping this in mind, the method of moments idea to verify if $X_n \Rightarrow X$ is to check if $\Exp[X_n^k] \to \Exp[X^k]$ or not for all $k \geq 1.$ Now, since moments of a random variable are linear combinations of its factorial moments and vice versa, we can alternatively also work with factorial moments. In case of a Poisson random variable, working with latter makes a lot of sense since the resulting  expressions are much simpler than those for the corresponding moments. In particular, if $X \sim \Poi(\lambda),$ then $\Exp[X^{(\ell)}] = \lambda^\ell.$}

\section{Betti numbers and Isolated faces in $Y_{n, d}(p)$}
\label{app:cohomology}

Lemma~\ref{lem:ExpBettiIsoFaceZero} is proved here. The cases $d = 1$ and $d \geq 2$ are dealt with separately, with the latter requiring cohomological arguments.

\begin{proof}[Proof of Lemma~\ref{lem:ExpBettiIsoFaceZero} for $d = 1$]
Let $V_n$ be the vertex set of $Y_{n, 1}(p_n).$ Since there can be at most one component of size bigger than $n/2,$ for reduced $\beta_0,$
\[
|\beta_0(Y_{n, 1}(p_{n})) - N_0(Y_{n, 1}(p_n))| \leq \sum_{V \subset V_{n}, 2 \leq |V| \leq n/2} 1_{V} + 1[N_0(Y_{n, 1}(p_n)) = n],
\]
where $1_V = 1$ whenever $V$ forms a connected component in $Y_{n, 1}(p_n)$ and there is no edge between a vertex in $V$ and a vertex in $V^c.$ If $|V| = k,$ then for all sufficiently large $n,$
\[
\mathbb{E}[1_{V}]  \leq k^{k - 2} p_n^{k - 1} (1 - p_n)^{k (n - k)}.
\]
This is because, when $|V| = k,$ there are $k^{k - 2}$ possible spanning trees in $V,$ the probability of getting a particular spanning tree in $V$ is $p_n^{k - 1},$ and the probability of having no edge between $V$ and $V^c$ is $(1 - p_n)^{k (n - k)}.$ We say sufficiently large because $p_n$ may be negative for small $n$ if $c$ is negative.  Hence, for all sufficiently large $n,$
\[
\EP|\beta_0(Y_{n, 1}(p_{n})) - N_0(Y_{n, 1}(p_n))| \leq \sum_{k = 2}^{n/2} \binom{n}{k} k^{k - 2} p_n^{k - 1} (1 - p_n)^{k (n - k)} + (1 - p_n)^{\tbinom{n}{2}}.
\]
Since $(1 - p_n) \leq e^{-p_n}$ and $\binom{n}{k} \leq e^k n^k /k^k,$ for all sufficiently large $n,$
\[
\EP|\beta_0(Y_{n, 1}(p_{n})) - N_0(Y_{n, 1}(p_n))| \leq \sum_{k = 2}^{n/2} e^{k} n^{k} k^{- 2} p_n^{k - 1} e^{- p_n k (n - k)} + e^{-p_n\tbinom{n}{2}}.
\]
As $p_n = (\log n + c)/n,$ the second term decays to $0$ with $n.$

With regards to the first term,
\[
\sum_{k = 2}^{n/2} e^{k} n^{k} k^{- 2} p_n^{k - 1} e^{- p_n k (n - k)} =  \sum_{k = 2}^{n/2} e^{k} k^{- 2} p_n^{k - 1} e^{k^2p_n -kc} \leq  \sum_{k = 2}^{n/2}  e^{T_k} ,
\]
where
\[
T_k = k - 2 \log(k) + (k - 1) \log(\log n + |c|) - (k - 1) \log n + k|c| + k^2  \left[\frac{\log n + |c|}{n}\right].
\]
Fix $n.$ Treating $k$ as a continuous variable, observe that the second derivative of $T_k$ w.r.t. $k$ is strictly positive for $k \in (3, n/2).$ This shows that $T_k$ is convex in $(3, n/2)$ and hence $T_k \leq \max\{T_3, T_{n/2}\}$ for $k \in \{3, \ldots, n/2\}.$ But $T_3 > T_{n/2}$ for all sufficiently large $n.$ Hence, for all sufficiently large $n,$
\[
\sum_{k = 2}^{n/2} e^{k} n^{k} k^{- 2} p_n^{k - 1} e^{- p_n k (n - k)} \leq  e^{T_2} + \frac{n}{2}e^{T_3}.
\]
But the RHS converges to $0$ with $n.$ The desired result now follows.
\end{proof}

We now give a brief exposition about reduced cohomology (w.r.t. $\bZ_2$ for simplicity) here which is necessary for proving Lemma~\ref{lem:ExpBettiIsoFaceZero} for the case $d \geq 2.$ In one line, it can be said that cohomology is the dual theory of homology and can be derived by considering the dual of the boundary operator $\partial$.

Consider a simplicial complex $\K.$ For $d \geq 0,$ a $d-$cochain of $\K$ is a map $g:\F^d \to \bZ_2.$ Its support\footnote{This notion of support is different than that given for chains in Section \ref{sec:simplicial_homology}} is given by $\supp(g):= \{\sigma \in \F^d:  g(\sigma) = 1\}.$ Let $C^d := \{g : \F^d \to \bZ_2\}$ denote the set of all $d-$cochains and it is $\bZ_2$-vector space under natural addition and scalar multiplication operations on $C^d$.
The $d-$th coboundary operator $\cobdr_d : C^d \to C^{d + 1}$ is defined as follows :
\begin{equation}
\label{defn:Cobdr}
\cobdr_{d}(g)(\sigma) := \sum_{\tau \in \F^{d} : \tau \subset \sigma} g(\tau), \, \, \, \, g \in C^d, \sigma \in \F^{d + 1}.
\end{equation}
For $d \geq 1,$ let $B^d := \im (\cobdr_{d - 1});$ and, for $d \geq 0,$ let $Z^d := \ker(\cobdr_{d}).$ Let $B^0 := \{0, 1\},$ where $0$ and $1$ are respectively the $0-$cochains that assign $0$ and $1$ to all vertices. The elements of $B^d$ are called {\em coboundaries} while the those of $Z^d$ are called {\em cocycles}. As in homology, we have that $\cobdr_d \circ \cobdr_{d-1} = 0$ and hence we define the $d-$th cohomology group
\[
H^d := \frac{Z^d}{B^d}.
\]
It is well known that the $d-$th homology group $H_d$ is isomorphic to the $d-$th cohomology group $H^d$  and so we have that $\beta_{d}(\K) := \rank(H^d(\K)).$

We first describe upper and lower bounds for Betti numbers, which to the best of our knowledge, have not been explicitly mentioned anywhere. But they follow from the proofs in \cite{linial2006homological,meshulam2009homological,kahle2014inside}.

We first discuss upper bounds for Betti numbers. Fix an arbitrary $d \geq 0.$ For $g \in C^d,$ let $[g] = g + B^{d}$	and
\begin{equation}
\label{eqn:wtg}
w(g) = \min\{|\supp(g^\prime)| : g^\prime \in [g]\},
\end{equation}
where $|\cdot|$ denotes cardinality. Then it follows that
\[
H^d = \{[g]: g \in Z^d\} = \{[0]\} \cup \{[g]: g \in Z^d, |\supp(g)| \geq 1, w(g) = |\supp(g)|\}
\]
and hence
\begin{equation}
\label{eqn:BetaRel}
\beta_d(\K) = \rank(\{[g]: g \in Z^d, |\supp(g)| \geq 1, w(g) = |\supp(g)|\}).
\end{equation}
For $d = 0,$ call every $\A \subseteq \F^0$ connected. For $d \geq 1,$ call $\A \subseteq \F^d$ connected, if for every $\sigma_1, \sigma_2 \in \A,$ there exists a sequence $\tau_1, \ldots, \tau_i \in \A$ with $\tau_1 = \sigma_1$ and $\tau_i = \sigma_2$ such that, for each $j,$ $\tau_j$ and $\tau_{j + 1}$ share a common $(d - 1)-$face. Now fix $d \geq 1$ and consider $g \in Z^d$ such that $\supp(g)$ is not connected. Then clearly there exists $\{\A_i : \A_i \subseteq \F^d\}$ such that each $\A_i$ is non-empty and connected; $\A_i \cap \A_j = \emptyset;$  for all $\sigma_i \in \A_i$ and $\sigma_j \in \A_j,$ $\sigma_i$ and $\sigma_j$ do not have a common $(d - 1)-$face;  and $\supp(g) = \cup_{i}\A_i.$ Let $g_{\A_i} \in C^d$ be such that $\supp(g_{\A_i}) = \A_i.$ It is then easy to see that
\[
g = \sum_i g_{\A_i}.
\]
From the above relation and our assumption that $g \in Z^d,$ it necessarily follows that each $g_{\A_i} \in Z^d.$ Suppose not. Then there exists $i$ and $\sigma \in \F^{d + 1}$ such that
\[
\cobdr_d(g_{\A_i})(\sigma) = \sum_{\tau \in \F^d, \tau \subset \sigma} g_{\A_i}(\tau) = 1.
\]
Since no $\sigma_i \in \A_i$ shares a $(d - 1)-$face with any $d-$face in $\cup_{j \neq i} \A_j,$  the above necessarily implies that $\cobdr_d(g)(\sigma) = 1;$ which is a contradiction.

From the above discussion and that the fact that the rank only depends upon independent elements, we have, for each $d \geq 0,$
\[
\beta_d(\K) = \rank(\{[g]: g \in Z^d, |\supp(g)| \geq 1, w(g) = |\supp(g)|, \supp(g) \text{ is connected}\}).
\]
If we define $1_{g} = \1[g \in Z^d]$ and
\begin{equation}
\label{eqn:G_Defn}
\G_d(\K) = \{g \in C^d: |\supp(g)| \geq 1, w(g) = |\supp(g)|, \supp(g) \text{ is connected}\},
\end{equation}
then the above discussion yields the upper bound
\begin{equation}
\label{eqn:Upp_Bd_Betti_d}
\beta_{d}(\K) \leq \sum_{g \in \G_d(\K)} 1_g.
\end{equation}

We now obtain a lower bound for the different Betti numbers. As usual, let $N_d(\K)$ denote the number of isolated $d-$faces in the given simplicial complex $\K.$ When $d = 0,$ we have
\[
N_0(\K) - 1[N_0(\K) = |\F^0| ] \leq \beta_0(\K).
\]
This shows that the number of isolated vertices is a lower bound for $\beta_0(\K)$ except in one particular case when all vertices in $\K$ are isolated. For $d \geq 1,$ however, one can easily come up with several examples when the number of isolated $d-$faces exceeds $\beta_d(\K).$ From this, it follows that $\beta_{d}(\K)$ for $d \geq 1$ needs to treated a little differently.

Fix $d \geq 1.$ In contrast to the setup used for the upper bound, we will assume here that the $d-$skeleton $\K^d$ of the given simplicial complex $\K$ is complete. Consider $\tilde{N}_{d}(\K),$ which we define to be the number of disjoint isolated $d-$faces in $\K.$ We call a $d-$face disjoint isolated if it is isolated in $\K$ and none of its neighbouring $d-$faces (i.e.,$\sigma^\prime \in \F^{d}$ which share a $(d - 1)-$face with $\sigma$) are isolated. Let $\sigma _1, \ldots, \sigma_{\tilde{N}_d}$ be all the disjoint isolated $d-$faces in $\K$ and let $g_{\sigma_1}, \ldots, g_{\sigma_{\tilde{N}_d}}$ be their associated indicator $d-$cochains. We claim that
\begin{equation}
\label{eqn:rankDisIso}
\rank(\{[g_{\sigma_1}], \ldots, [g_{\sigma_{\tilde{N}_d}}]\}) = \tilde{N}_d.
\end{equation}
If $\tilde{N}_d = 1,$ then the above is obviously true. We need to verify it for $\tilde{N}_d > 1.$ Suppose not. Then there exists $I \subseteq \{1, \ldots, \tilde{N}_d\}$ such and  a $f \in C^{d - 1}$ such that
\begin{equation}
\label{eqn:Dep}
\sum_{i \in I} g_{\sigma_i} = \cobdr_{d - 1}(f) \, \, \mbox{i.e.,} \, \, \sum_{i \in I} g_{\sigma_i} \in [0].
\end{equation}
Fix an arbitrary $i \in I$ and consider $\sigma_i.$ Suppose $\sigma_i = \{v_0, \ldots, v_d\}.$ Let $v_{d + 1} \in \F^{0}$ be such that $v_{d + 1} \notin \sigma_i.$ For $j = 0$ to $d,$ let $\tau_{ij} = \{v_0, \ldots, v_{d + 1}\} \backslash \{v_j\}.$ Note that each $\tau_{ij}$ is a $d-$face and it belongs to $\F^d$ because of our assumption that $\K^d$ is complete. Clearly none of the $\tau_{ij}$'s are $\sigma_k$'s for any $i,j,k$. Thus, from \eqref{eqn:Dep}, we have that for $i,j \in \{0,\ldots,d\}$
\[
\cobdr_{d - 1}(f)(\tau_{ij}) = 0 \, \, \mbox{and} \, \,  \cobdr_{d - 1}(f)(\sigma_i) = 1.
\]
But by the property of the coboundary operator, we derive a the contradiction that
\[
\cobdr_d (\cobdr_{d-1}(f))[v_0,\ldots,v_{d+1}] = \sum_{j = 0}^{d} \cobdr_{d - 1}(f)(\tau_{ij}) + \cobdr_{d - 1}(f)(\sigma_i) = 0.
\]
Thus \eqref{eqn:rankDisIso} holds even when $\tilde{N}_d > 1.$ From \eqref{eqn:rankDisIso} and \eqref{eqn:BetaRel}, we now have the lower bound
\begin{equation}
\label{eqn:Lwr_Bd_Betti_d}
\tilde{N}_{d}(\K) \leq \beta_{d}(\K).
\end{equation}

Some preliminary results are required before we prove Lemma~\ref{lem:ExpBettiIsoFaceZero} for $d \geq 2.$ The below result can be proved using above arguments. But we give a slightly more general proof.
\begin{lemma}
Fix $d \geq 0.$ Let $\K$ be a simplicial complex with complete $d-$skeleton such that $|\F^0| \geq d+ 2.$ Let $\sigma \in \F^d$ and $g_{\sigma}$ be the associated indicator $d-$cochain. Then $g_{\sigma} \in \G_{d}(\K).$ Also $\sigma$ is isolated in $\K$ if and only if $g_{\sigma} \in Z^d.$
\end{lemma}
\begin{proof}
It is easy to see that if $g_\sigma \notin B^d,$ then the desired result follows. For $d = 0,$ since $|\F^0| \geq 2$ and $B^0 = \{0, 1\},$ it is immediate that $g_{\sigma} \notin B^0.$ Let $d \geq 1$ and suppose that $g_{\sigma} \in B^d(\K),$ i.e., there exists $f \in C^{d - 1}(\K)$ such that $\delta_{d - 1}(f) = g_{\sigma}.$ Since $|\F^0(\K)| \geq d + 2,$ it follows that there exists $v \notin \sigma.$ Construct a new simplicial complex $\K_*$ such that $\K_*^d = \K^d$ and $\{v\} \cup \sigma$ is the only $(d + 1)-$face in $\K_*.$ Clearly $C^{d - 1}(\K_*) = C^{d - 1}(\K),$ $C^d(\K_*) = C^d(\K),$ and $B^d(\K_*) = B^d(\K),$  and hence $g_{\sigma} \in B^d(\K_*)$ and $f \in C^{d - 1}(\K_*)$ with $g_{\sigma} = \delta_{d - 1}(f).$ For $\delta_d : C^d(\K_*) \to C^{d + 1}(\K_*),$ we have $\delta_d(g_{\sigma})( \{v\} \cup \sigma) = 1.$ But this is a contradiction, since by definition of boundary maps, $\delta_d(\delta_{d - 1}(f))(\{v\} \cup \sigma) = 0.$ Hence $g_{\sigma} \notin B^d(\K_*),$ which implies that $g_{\sigma} \notin B^d(\K).$ The desired result now follows.
\end{proof}

Denoting by $N_{d}(\K)$, the number of isolated $d-$faces in $\K$, we have the following consequence of the above result :
\begin{equation}
\label{eqn:Upp_Bd_Isol_Faces}
N_{d}(\K) + \sum_{g \in \G_{d}(\K), |supp(g)| >1} 1_{g} = \sum_{g \in \G_{d}(\K)} 1_{g}.
\end{equation}
\begin{lemma}
\label{lem:Support_2_cochain}
Fix $d \geq 1.$ Let $\K$ be a simplicial complex with complete $d-$skeleton such that $|\F^0| \geq 2d + 4.$ Let $\sigma, \sigma^\prime \in \F^d$ be such that $\sigma \neq \sigma^\prime$ and $\sigma$ and $\sigma^\prime$ share a common $(d - 1)-$face. If $g_{\sigma, \sigma^\prime}$ is that $g \in C^d$ which has $\supp(g) = \{\sigma, \sigma^\prime\},$ then $g_{\sigma, \sigma^\prime} \in \G_{d}(\K).$
\end{lemma}

\begin{proof}
Since $|\supp(g_{\sigma, \sigma^\prime})| = 2$ and $\supp(g_{\sigma, \sigma^\prime})$ is connected, it suffices to show that $w(g_{\sigma, \sigma^\prime}) = 2,$ where $w(\cdot)$ is as in \eqref{eqn:wtg}. By repeating the argument below \eqref{eqn:Dep}, we can derive a contradiction to $[g_{\sigma,\sigma^{\prime}}] = [0]$ and so $w(g_{\sigma, \sigma^\prime}) \neq 0.$ Suppose $w(g_{\sigma, \sigma^\prime}) = 1.$ Then there exists some indicator $d-$co-chain $g_{\hat{\sigma}}$ associated with the $d-$face $\hat{\sigma}$ such that
\[
g_{\sigma, \sigma^\prime} +  g_{\hat{\sigma}} = \cobdr_{d - 1}(f).
\]
Now either $\hat{\sigma}$ belongs to $\{\sigma, \sigma^\prime\}$ or not. In the former case, without loss of generality, assume that $\hat{\sigma} = \sigma^\prime.$ Then it follows that $g_{\sigma} = g_{\sigma, \sigma^\prime} + g_{\hat{\sigma}} = \delta_{d - 1}(f).$ Repeating the argument below \eqref{eqn:Dep} (with $\sigma_i$ there replaced by $\sigma$), we get a contradiction. Now suppose that $\hat{\sigma} \notin \{\sigma, \sigma^\prime\}.$ Since $|\F^0| \geq 2d + 4,$ there exists $v_{d + 1} \in \F^0$ such that $v_{d + 1} \notin \sigma \cup \sigma^\prime \cup \hat{\sigma}.$ again repeating the argument below \eqref{eqn:Dep}, we again get a contradiction and hence
$w(g_{\sigma, \sigma^\prime}) = 2$ as required.
\end{proof}


\begin{theorem}
\label{thm:Betti_Iso_Bd}
Fix $d \geq 1.$ Let $\K$ be a simplicial complex with complete $d-$skeleton such that $|\F^0| \geq 2d + 4.$ Let $N_{d}(\K)$ denote the number of isolated $d -$faces in $\K.$ Then
\[
|\beta_d(\K) - N_d(\K)| \leq 3\sum_{g \in \G_{d}(\K), |\supp(g)| > 1}1_g.
\]
\end{theorem}
\begin{proof}
Let $\tilde{N}_{d}(\K)$ denote the number of disjoint isolated $d-$faces in $\K.$ Then from \eqref{eqn:Lwr_Bd_Betti_d}, \eqref{eqn:Upp_Bd_Betti_d} and \eqref{eqn:Upp_Bd_Isol_Faces}, we have
\[
\tilde{N}_{d}(\K) \leq \beta_{d}(\K) \leq \sum_{g \in \G_{d}(\K)}1_g \, \, ;  \, \,
\tilde{N}_{d}(\K) \leq N_d(\K) \leq \sum_{g \in \G_{d}(\K)}1_g. \]
Combining the above two relations, we get
\begin{equation}
\label{eqn:Betti_Iso_Gap}
|\beta_d(\K) - N_d(\K)| \leq \sum_{g \in \G_{d}(\K)}1_g - \tilde{N}_d(\K) = \sum_{g \in \G_{d}(\K)}1_g - N_d(\K) + N_d(\K) -  \tilde{N}_d(\K).
\end{equation}

Let $1(\sigma) = \1[\mbox{$\sigma$ is an isolated $d-$face in $\K$}]$ and $\hat{1}(\sigma) = \1[\mbox{$\sigma$ is a disjoint isolated $d-$face in $\K$}].$ Then, we have that
\[
N_d(\K) - \tilde{N}_d(\K) = \sum_{\sigma \in \F^d} [1(\sigma) - \hat{1}(\sigma)] \leq \sum_{\sigma \in \F^d}\sum_{\sigma^\prime} 1(\sigma) 1(\sigma^\prime),
\]
where the second sum is over all neighbouring $d-$faces of $\sigma$. For a neighbouring $d-$face $\sigma^\prime$,
\[
1(\sigma) 1(\sigma^\prime)  \leq 1_{g_{\sigma, \sigma^\prime}} ,
\]
where $g_{\sigma, \sigma^\prime}$ is that $g \in C^d$ and $\supp(g) = \{\sigma, \sigma^\prime\}.$ From the above discussion, we have
\[
N_d(\K) - \tilde{N}_d(\K) \leq 2 \sum_{g \in \G_d(\K), |\supp(g)| =2} 1_{g},
\]
The factor $2$ comes because $g_{\sigma, \sigma^\prime} = g_{\sigma^\prime, \sigma}.$ Now using  \eqref{eqn:Upp_Bd_Isol_Faces} in \eqref{eqn:Betti_Iso_Gap}, the proof is complete.
\end{proof}

\begin{lemma}
\cite[(3.5), (5.1)]{kahle2014inside}
\label{lem:NoLargeCochains}
Let $d \geq 2.$ Consider the random $d-$complex $Y_{n, d}(p_n)$ with
\[
p_n = \frac{d \log n + c - \log(d!) }{n}
\]
for some fixed $c \in \bR.$ Let $N_{d - 1}(Y_{n, d}(p_n))$ be the number of isolated $(d - 1)-$faces in $Y_{n, d}(p_n).$ Also let $\G_{d - 1}(Y_{n, d}(p_n))$ be as in \eqref{eqn:G_Defn}. Then
\[
\lim_{n \to \infty} \mathbb{E}\left[\sum_{g \in \G_{d - 1}(Y_{n, d}(p_n))} 1_g  - N_{d - 1}(Y_{n, d}(p_n))\right] = \lim_{n \to \infty} \mathbb{E}\left[\sum_{g \in \G_{d - 1}(Y_{n, d}(p_n)), |\supp(g)| > 1} 1_g\right] =  0.
\]
\end{lemma}

In \cite{kahle2014inside} (see in particular Section 5 there), each $g \in \G_{d - 1}(Y_{n, d}(p_n))$ is identified by an appropriate hypergraph $H.$ $X(H)$ is the number of $d-$faces in $Y_{n, d}(p_n)$ that contain an odd number of faces of $\supp(g).$ Hence the result follows from the following inequality :
\[
\EP[1_g] = (1 - p_n)^{X(H)} \leq e^{-p_n X(H)}.
\]

\begin{proof}[Proof of Lemma \ref{lem:ExpBettiIsoFaceZero} for $d \geq 2$]
This is easy to see from Theorem~\ref{thm:Betti_Iso_Bd} and Lemma~\ref{lem:NoLargeCochains}.
\end{proof}

\bibliographystyle{amsplain}
\bibliography{Oberwolfach}
\end{document}